\documentclass[11pt]{amsart}
\usepackage{mathrsfs}
\usepackage{threeparttable}
\usepackage{amsfonts}
\usepackage{amsmath}
\usepackage{amssymb}
\usepackage{enumerate}
\usepackage{latexsym}
\usepackage{xcolor}
\usepackage{enumitem}
\usepackage{enumerate}
\usepackage{makecell}
\usepackage{hyperref}
\usepackage[nameinlink,noabbrev]{cleveref}  
\usepackage{aliascnt}                       
\usepackage{listings}

\def\a{\alpha}    \def\b{\beta}      \def\l{\langle}   \def\r{\rangle}    
       \def\O{{\rm O}}   \def\Ga{\Gamma}   
                
\def\qed{\hfill $\Box$}                    
\def\pf{\noindent{\it Proof.}}

\newcommand\ZZ{\mathrm C}        \newcommand\D{\mathrm{D}}        
     
\newcommand\A{\mathrm{A}}        \newcommand\Sy{\mathrm{S}}       
\newcommand\Aut{\mathrm{Aut}}        \newcommand\Out{\mathrm{Out}}
\newcommand\Cay{\mathrm{Cay}}         
\newcommand\K{\mathsf{K}}        \newcommand\soc{\mathrm{soc}}    \newcommand\Core{\mathrm{Core}}

\newcommand\N{\mathbf{N}}                 
    
\newcommand\Om{\mathrm{\Omega}}

\newcommand\GL{\mathrm{GL}}    \newcommand\SL{\mathrm{SL}}           \newcommand\AGL{\mathrm{AGL}} 
\newcommand\PGL{\mathrm{PGL}}    \newcommand\PG{\mathrm{PG}} 
\newcommand\PSL{\mathrm{PSL}}  
\newcommand\SU{\mathrm{SU}}                
\newcommand\PSU{\mathrm{PSU}} 
               
\newcommand\Sp{\mathrm{Sp}}    \newcommand\PSp{\mathrm{PSp}}         
            
\newcommand\GammaL{\mathrm{\Gamma L}}           
\newcommand\AGammaL{\mathrm{A\Gamma L}}         
\newcommand\PGammaL{\mathrm{P\Gamma L}}           
\newcommand\POmega{\mathrm{P\Omega}}

\newcommand\E{\mathrm{E}}      \newcommand\F{\mathrm{F}}       \newcommand\G{\mathrm{G}}
       \newcommand\J{\mathrm{J}}
\newcommand\M{\mathrm{M}}      \newcommand\Q{\mathrm{Q}}       
\newcommand\HS{\mathrm{HS}}

\newtheorem{theorem}{Theorem}[section]
\newtheorem{hypothesis}[theorem]{Hypothesis}

\newtheorem{lemma}[theorem]{Lemma}
\newtheorem{definition}{Definition}[section]
\newtheorem{proposition}[theorem]{Proposition}
\theoremstyle{definition}
\newtheorem{example}[theorem]{Example}

\makeatletter
\def\subsection{\@startsection{subsection}{2}{\z@}%
   {1.5ex\@plus1ex \@minus.2ex}%
   {1.5ex \@plus.2ex}%
   {\normalfont\bfseries}}
\makeatother

\begin{document}
\title[2-arc-transitive Cayley graphs]{On 2-Arc-Transitive Cayley Graphs of Nonabelian Characteristically Simple Groups}
\thanks{Corresponding author: Hao Yu}
\thanks{2020 Mathematics Subject Classification. 05C25, 20B25.}
\thanks{The project was partially supported by NSF of Guangxi (2025GXNSFAA069013) and the NNSF of China (12571362, 12601653)}
\author[F. Deng, J.J. Li \and H. Yu]{%
Feng Deng, Jing Jian Li and Hao Yu}
\address{School of Mathematics \& Guangxi Base, Tianyuan Mathematical Center in Southwest China \& Guangxi Center for Mathematical Research \& Center for Applied Mathematics of Guangxi (GXU), 
  Guangxi University, Nanning, Guangxi 530004, P. R. China. }
\email{dengfeng@st.gxu.edu.cn (F. Deng);  lijjhx@gxu.edu.cn (J.J. Li);  haoyu@gxu.edu.cn (H. Yu).}

\begin{abstract}
The classification of symmetric Cayley graphs has attracted considerable attention. Results have been obtained for finite abelian groups and nonabelian simple groups. However, the problem remains open for finite nonabelian characteristically simple groups, although some results on normality for small valencies are known. 
In this paper, we determine the possible structures of automorphism groups $X$ of connected 2-arc-transitive Cayley graphs $\Ga$ of $G\cong T^k$, where $T$ is a finite nonabelian simple group and $k\geqslant2$. 
In the quasiprimitive case, we prove that $G\unlhd X$, $\Ga$ is complete, or $X$ is of product action type with restricted parameters. For the non-quasiprimitive case, we determine the possible structures of the quasiprimitive normal quotients of $\Ga$.

\vskip 5pt 

\noindent {\sc Keywords}. Cayley graph, 2-arc-transitive graph, characteristically simple group, quasiprimitive group

\end{abstract}

\maketitle

\section{Introduction}\label{sec1}

Throughout this paper, all graphs are finite, simple, undirected and connected. For a graph $\Ga$, let $V(\Ga)$, $E(\Ga)$ and $\Aut(\Ga)$ denote its vertex set, edge set and full automorphism group. 
For $X\leqslant \Aut(\Ga)$ and a positive integer $s$, an \textit{$s$-arc} in $\Ga$ is a sequence of vertices $(\a_0,\a_1,\ldots,\a_s)$ such that $\{\a_{i},\a_{i+1}\}\in E(\Ga)$ for $0\leqslant i\leqslant s-1$ and $\a_{j-1} \neq \a_{j+1}$ for $1\leqslant j\leqslant s-1$. 
The graph $\Ga$ is called \textit{$(X,s)$-arc-transitive} if $X$ is transitive on $V(\Ga)$ and on the $s$-arcs of $\Ga$. 
In particular, $\Ga$ is simply called \textit{$X$-arc-transitive} or \textit{$X$-symmetric} for $s=1$, and if $X=\Aut(\Ga)$, then $X$ is often omitted.

Let $\Ga$ be an $X$-symmetric graph with $N\unlhd X\leqslant \Aut(\Ga)$. The \textit{normal quotient graph} $\Ga_N$ of $\Ga$ relative to $N$ is defined as the graph with vertices the $N$-orbits in $V(\Ga)$, such that two $N$-orbits $U, W$ are adjacent (joined by an edge) in $\Ga_N$ if and only if at least one $x\in U$ and at least one $y\in W$ are adjacent in $\Ga$.
By the above definition, the valency of $\Ga_N$ necessarily divides the valency of the graph $\Ga$, and if the two are equal, then the graph $\Ga$ is called a \textit{normal cover of} $\Ga_N$.

Given a finite group $G$ and an inverse-closed subset $S$ of $G$ that does not contain the identity, that is, $1\not\in S\subset G$ and $S=S^{-1}:=\{s^{-1}\mid s\in S\}$, the \textit{Cayley graph} $\Cay(G,S)$ of $G$ is the graph with vertex set $G$ in which two vertices $x$ and $y$ are adjacent if and only if $yx^{-1}\in S$. 
It is easy to see that $\Cay(G,S)$ is connected exactly when $S$ generates $G$. 
The right regular representation of $G$ is contained in $\Aut(\Cay(G,S))$, and throughout the paper we identify $G$ with this regular subgroup. 
The graph $\Ga=\Cay(G,S)$ is said to be \textit{$X$-normal} if $G\unlhd X\leqslant \Aut(\Ga)$; in particular, if $X=\Aut(\Ga)$, then $\Ga$ is simply called \emph{normal}. Let $\Aut(G,S)=\{x\in\Aut(G) \mid S^x=S\}$. 
Since $1\in G$, the definition directly implies that $\Aut(G,S)\leqslant (\Aut(\Ga))_1$. It is well known that $\N_{\Aut(\Ga)}(G)=G\rtimes \Aut(G,S)$.

The study of symmetric graphs, which aims to characterize and construct such graphs, has attracted considerable attention in the literature. Tutte proved that no finite cubic graph is $s$-arc-transitive for $s\geqslant6$ \cite{T1947}, and Weiss extended this restriction by showing that no finite graph of valency at least three is 8-arc-transitive \cite{W1981}. For 2-arc-transitive Cayley graphs, classification results have been obtained for cyclic, dihedral and finite abelian groups \cite{ACMX1996,DMM2008,LP2008,M2003}. Li, Wang and Xia characterized $s$-arc-transitive Cayley graphs of finite nonabelian simple groups \cite{LWX2022}. For finite nonabelian characteristically simple groups, Hua and Feng studied cubic graphs admitting transitive characteristically simple groups \cite{HF2011}, while Du and Feng considered the pentavalent case \cite{DF2018}. However, these results mainly concern simple groups or graphs of small valencies, and the general structure of 2-arc-transitive Cayley graphs of finite nonabelian characteristically simple groups remains to be determined.

Quasiprimitive permutation groups and normal quotient graphs provide a framework for studying this problem. Praeger developed the theory of quasiprimitive groups and normal quotients \cite{BP2003,P1993,P2003}, while Daldegan and Schneider classified transitive nonabelian characteristically simple subgroups of finite quasiprimitive permutation groups \cite{DS2020}. Motivated by these results, we study connected $(X,2)$-arc-transitive Cayley graphs of $G\cong T^k$, where $T$ is a finite nonabelian simple group and $k\geqslant 2$.

\vskip 5pt 
We use the following hypothesis throughout the paper.

\begin{hypothesis}\label{hypothesis}
Let $\Ga=\Cay(G,S)$ be a connected $(X,2)$-arc-transitive Cayley graph of finite group $G=T_1\times\cdots\times T_k\cong T^k$, where each $T_i\cong T$ is finite nonabelian simple, $k$ is an integer with $k\geqslant 2$, and $G<X\leqslant \Aut(\Ga)$. When $X$ is not quasiprimitive on $V(\Ga)$, let $N$ be a maximal intransitive normal subgroup of $X$. Fix $\a\in V(\Ga)$, let $\overline{\a}=\a^N\in V(\Ga_N)$, and let $\overline{X}=X/N$ and $\overline{G}=GN/N$. 
\end{hypothesis}

The following theorem is the main result.

\begin{theorem}\label{main}
Let $\Ga$ be a graph described in Hypothesis \ref{hypothesis}. If $X$ is quasiprimitive, then $\Ga$ is $X$-normal, $\Ga$ is a complete graph, or $X$ is of product action type with $\soc(X)\cong Q^k$, where $(Q,T,k)$ is listed in Table \ref{tab:exact-pa}. Otherwise, $\overline{X}$ is quasiprimitive on $V(\Ga_N)$ and one of the following holds.
\begin{enumerate}[leftmargin=1.2em]
\item $\overline{X}$ is of affine type, $T\cong \PSL_2(7)$ and $\Ga_N\cong \K_{2^{3\ell}}$.
\item $\overline{X}$ is of almost simple type. Then either $\Ga_N$ is complete, or $\overline{G}\cong T$ and one of the following holds: $\overline{G}=\soc(\overline{X})$; $\Ga_N$ is $(\soc(\overline{X}),2)$-arc-transitive; or one of the possibilities in Tables \ref{tab:AS1}, \ref{tab:AS2}, \ref{tab:AS3}, \ref{tab:AS4} and \ref{tab:AS5} occurs.
\item $\overline{X}$ is of product action type with $\soc(\overline{X})\cong Q^r$ and $\overline{G}\cong T^\ell$, where $2\leqslant r$ and $1\leqslant\ell \leqslant k$. Then either $\overline{G}\unlhd \overline{X}$, or $\ell=r$ and $(Q,T,r)$ is listed in Table \ref{tab:equal-prime-r}. If $N\leqslant G$, then $(Q,T,r)$ also occurs in Table \ref{tab:exact-pa}.
\item $\overline{X}$ is of twisted wreath type and $\Ga_N$ is a Cayley graph of $\overline{G}=\soc(\overline{X})$.
\end{enumerate}
\end{theorem}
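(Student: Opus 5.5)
Split on whether $X$ is quasiprimitive, and in each case go through Praeger's O'Nan--Scott classification of quasiprimitive 2-arc-transitive groups \cite{P1993}. By that theorem, a quasiprimitive group acting 2-arc-transitively on a graph is of type HA, AS, PA or TW. The main input will be the classification of Daldegan and Schneider \cite{DS2020} of transitive nonabelian characteristically simple subgroups $G\cong T^k$ of quasiprimitive groups.

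\textbf{Quasiprimitive case.}
\begin{itemize}
\item Since $|V(\Ga)|=|T|^k$ is not a prime power, type HA cannot occur.
\item If $X$ is almost simple, $G\cong T^k$ with $k\geqslant 2$ is a regular subgroup of $X$. The factorisation $X=GX_\a$, together with the known factorisations of almost simple groups having a factor containing $T^k$ (as in \cite{DS2020}), should force either $X$ to be 2-transitive, so that $\Ga$ is complete, or a contradiction.
\item If $X$ is of type TW, then $\soc(X)$ is the unique minimal normal subgroup and is regular. By \cite{DS2020}, either $G=\soc(X)\unlhd X$, or $G$ is again regular; comparing orders gives $G=\soc(X)$.
\item If $X$ is of type PA with $\soc(X)=Q^r$, \cite{DS2020} describes how $G$ sits in the socle. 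Either $G\unlhd X$, or $r=k$ and each coordinate gives a factorisation $Q=T\cdot Q_\delta$-type with $T$ transitive on the component set. Combining this with the 2-arc-transitivity constraints on the point stabiliser should cut the list down to Table~\ref{tab:exact-pa}.
\end{itemize}

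\textbf{Non-quasiprimitive case.} Take $N$ maximal intransitive normal. Standard normal-quotient theory \cite{P1993} gives the following: $N$ is semiregular, $\Ga$ is a normal cover of $\Ga_N$, $\Ga_N$ is $(\overline{X},2)$-arc-transitive, and $\overline{X}$ is quasiprimitive on $V(\Ga_N)$, except for the degenerate bipartite case, which the maximality of $N$ handles. Moreover $\overline{G}=GN/N\cong G/(G\cap N)$ is a quotient of $T^k$, hence $\overline{G}\cong T^\ell$, and it is transitive on $V(\Ga_N)$. We then redo the type analysis for $\overline{G}\leqslant\overline{X}$, where now $\overline{G}$ is only transitive, not necessarily regular.
\begin{itemize}
\item \emph{HA:} Transitivity of $T^\ell$ on $p^d$ points is used together with 2-arc-transitive affine graphs \cite{IP1993}. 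The only simple transitive subgroups of affine 2-transitive groups of nonsoluble type give $T=\PSL_2(7)$ inside $\AGL_3(2)^{\ell}$-like structures, with $\Ga_N$ complete of order $2^{3\ell}$.
\item \emph{AS:} A transitive $T^\ell$ in an almost simple group forces $\ell=1$ unless the action is 2-transitive, in which case $\Ga_N$ is complete. Then $\overline{X}=\overline{G}\,\overline{X}_{\overline\a}$ with $\overline{G}\cong T$, and we invoke the Liebeck--Praeger--Saxl factorisation tables to produce Tables~\ref{tab:AS1}--\ref{tab:AS5}. The remaining alternatives are $\overline{G}=\soc$ or $\soc(\overline{X})$ already 2-arc-transitive.
\item \emph{PA and TW:} Repeat the quasiprimitive-case argument with $\ell$ in place of $k$, giving Table~\ref{tab:equal-prime-r}. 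When $N\leqslant G$, the quotient $\overline{G}$ acts regularly, so the exact list in Table~\ref{tab:exact-pa} applies.
\end{itemize}

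\textbf{Main obstacle.} The hardest step is the almost simple case for $\overline{X}$, which requires a careful case-by-case pass through the maximal factorisations of almost simple groups intersected with the 2-arc-transitive local structure. I would first restrict $\overline{X}_{\overline\a}$ using the fact that it is 2-transitive on the neighbourhood. This forces it to have a 2-transitive quotient, which prunes most factorisation families.
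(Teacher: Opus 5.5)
Your overall architecture matches the paper's. You split on quasiprimitivity, use Praeger's list to discard simple diagonal type, use \cite{DS2020} to put $G$ inside the socle, rule out HA by order, get the natural $\A_n$ action for AS, use regularity of the socle for TW, and apply the standard normal-quotient reduction.

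\textbf{Gap in the product action case.} This case carries all the content of Tables~\ref{tab:equal-prime-r} and~\ref{tab:exact-pa}, and it is not actually argued.
\begin{enumerate}
\item Your summary of \cite{DS2020} there is wrong. Besides $r=k$ with $T_i<Q_i$, it also allows $\soc(X)\cong T^r$ with $r>k$, the strip embeddings of Table~\ref{tab:groups}, and $(\A_n)^r$ with stabiliser $(\A_{n-1})^r$. Each of these must be excluded.
\item The sentence ``2-arc-transitivity constraints should cut the list down'' is where the proof should be. The missing idea is the dichotomy of \cite[Theorem 1.1]{Lu2026}: either every projection $\pi_i|_{L_\a}$ is injective, or $L_\a\cong(\ZZ_p^t\times\ZZ_{m_1}).\ZZ_m$ is solvable. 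In the second case each $Q_i$ is semiregular by \cite[Theorem 4.2]{LS2023}, so $r\geqslant 3$.
\item The non-injective branch must then be killed. This uses primitive-prime-divisor arithmetic against $|Q:T|^r=|L_\a|$. It also uses \cite[Lemma 5.2]{BPS2004} when $T\cong Q$: a simple group is not a product of two isomorphic solvable proper subgroups. For strips it uses Lemma~\ref{lem:strip-pair-exclusion}(ii).
\item Injectivity gives $|L_\a|<|Q|$, hence $|Q|^{r-1}\mid |T|^k$ and so $\Pi(|Q|)=\Pi(|T|)$. This excludes $T\cong Q$, the strips and the alternating case, and forces $r=k$. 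It then lets you apply the Liebeck--Praeger--Saxl classification of pairs $T<Q$ with equal prime sets, which is where Table~\ref{tab:equal-prime-r} comes from.
\item Only after that does regularity ($G\cap L_\a=1$) let you build exact factorisations $Q_i=T_iB_i$ with $T_i\cap B_i=1$ and $|B_i|=|Q:T|$. The factorisation classifications then yield Table~\ref{tab:exact-pa}. The $(\M_{12},\M_{11})$ row still needs the extra decomposition $R_i=A_iB_i$ and a subgroup computation.
\end{enumerate}

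\textbf{Gaps in the non-quasiprimitive case.}
\begin{enumerate}
\item ``Repeat with $\ell$ in place of $k$'' fails for $\overline{X}$. Here $\overline{G}$ is only transitive, $\overline{G}\cap L_{\overline{\a}}$ may be nontrivial, and the exact-factorisation step is lost. The paper reruns only the injectivity dichotomy and the prime-set argument (Lemma~\ref{lem:PA2}), proving $\ell\geqslant 2$ and $\ell=r$ along the way. That is why the conclusion is the weaker Table~\ref{tab:equal-prime-r}, whose row 5 survives exactly because the intersection can be nontrivial. Your quasiprimitive sketch also never produced Table~\ref{tab:equal-prime-r}, so this step has no antecedent.
\item In the almost simple quotient, maximal-factorisation tables do not produce Tables~\ref{tab:AS1}--\ref{tab:AS5}. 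You also give no criterion that lands you in the ``$(\soc(\overline{X}),2)$-arc-transitive'' alternative. The paper needs a three-way split:
\begin{itemize}
\item If $N\leqslant G$, then $\overline{G}$ is regular, and the classification of 2-arc-transitive Cayley graphs of simple groups \cite{LWX2022} gives Table~\ref{tab:AS1}.
\item If $N\nleqslant G$ and $L_{\overline{\a}}$ is solvable, the local socle is $\ZZ_p^e$. Factorisations with a solvable factor \cite{LX2022}, filtered by $|L:T|\mid |L_{\overline{\a}}|$ and $T\cap L_{\overline{\a}}\neq 1$, give Tables~\ref{tab:AS2}--\ref{tab:AS5}.
\item If $L_{\overline{\a}}$ is nonsolvable, one shows via \cite[Lemmas 4.1, 4.2]{L2025} that $L$ itself is locally 2-transitive. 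This requires eliminating the exceptional local actions $\PSL_2(8)$ of degree $28$ and $\ZZ_p^2:\SL_2(5)$ with $p\in\{19,29,59\}$. The tools are that the index of the $L$-local action in the $\overline{X}$-local action divides $|\Out(L)|$, together with factorisations having two nonsolvable factors.
\end{itemize}
\item In the affine quotient you must actually exclude the folded cubes, the graphs $P_m(a)$ with $a\geqslant 2$, and the code graphs of \cite{IP1993}, for example by $7^\ell$ and $2$-part counts. Appealing to 2-transitivity is not enough, since $\overline{X}$ need not be 2-transitive on vertices.
\item $N$ having only two orbits is excluded because $G$ is perfect, not by the maximality of $N$.
\end{enumerate}
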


After this introductory section, Section \ref{sec2} introduces some notation, basic definitions and useful facts. Section \ref{sec3} gives the structural reduction for 2-arc-transitive Cayley graphs of finite characteristically simple groups. The quasiprimitive case is treated in Section \ref{sec4}, while the non-quasiprimitive case is considered in Section \ref{sec5}. The proof of Theorem \ref{main} is completed in Section \ref{sec6}.

\section{Preliminaries}\label{sec2}

The notation and terminology are standard and follow \cite{PS2018}.
For example, we use $T^m$ to denote the direct product of $m$ copies of $T$. For a group $X$ and a prime $r$, let $\O_r(X)$ denote the largest normal $r$-subgroup of $X$. Let $n$ be a positive integer. For a prime $p$, denote by $v_p(n)$ the largest nonnegative integer $a$ such that $p^a\mid n$, and denote by $\Pi(n)$ the prime divisors of $n$. Recall that $X^{(\infty)}$ is the smallest normal subgroup of a finite group $X$ such that $X/X^{(\infty)}$ is solvable.

\subsection{Groups}

We recall the following from \cite[Definition 2.2]{DS2020} regarding subgroups of direct products:
\begin{definition}\label{def1}
Let $M_1,\ldots,M_r$ be groups, let $N=M_1\times\cdots\times M_r$, and let $H\leqslant N$. Consider, for $i\in \{1, \ldots, r\}$, the projections $\pi_i: N\to M_i$ such that $(x_1, \ldots, x_r) \mapsto x_i$ for $x_i\in M_i$ and $1\leqslant i\leqslant r$. Then $H$ is called a {\it strip} of $N$ if $H\neq1$ and, for each $i$, either the restriction of $\pi_i$ to $H$ is injective or $\pi_i(H)=1$. 
Let $\mathcal{P}(H)=\{M_i \mid \pi_i(H)\neq 1\}$. Two strips $H$ and $K$ are said to be {\it disjoint} if $\mathcal{P}(H)\cap \mathcal{P}(K)=\emptyset$. The subgroup $H$ is a {\it subdirect subgroup} of $N$ if $\pi_i(H)=M_i$ for each $i$, and it is a {\it diagonal subgroup} of $N$ if the restriction of $\pi_i$ to $H$ is injective for each $i$. Further, $H$ is a {\it full diagonal subgroup} of $N$ if $H$ is both subdirect and diagonal. 
\end{definition}

A permutation group $X$ on $\Om$ is \textit{quasiprimitive} if every nontrivial normal subgroup of $X$ is transitive on $\Om$. We first recall a basic consequence of the classification of transitive characteristically simple subgroups of quasiprimitive groups.

\begin{proposition}\cite[Theorem 1.2]{DS2020}\label{subgroup}
Let $X$ be a finite quasiprimitive permutation group on $\Om$ with nonabelian socle, and let $G$ be a transitive nonabelian characteristically simple subgroup of $X$. Then $G\leqslant \soc(X)$.
\end{proposition}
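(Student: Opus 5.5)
The approach is to pass to the quotient by the socle and show that the image of $G$ there is trivial. Write $M=\soc(X)\cong Q^r$ with $Q$ nonabelian simple. Since $G\cong T^k$ is characteristically simple and $G\cap M\unlhd G$, the normal subgroups of $G$ are exactly the products of some of the simple direct factors $T_i$. So $G\cap M\cong T^{k-j}$ and
\[
GM/M\cong G/(G\cap M)\cong T^{j}
\]
for some $0\leqslant j\leqslant k$. The aim is to prove $j=0$.

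\textbf{Step 1 (where $G$ can sit modulo $M$).} By the O'Nan--Scott--Praeger description of quasiprimitive groups with nonabelian socle, $X$ is of type AS, HS, HC, SD, CD, TW or PA. In each type, $X/M$ embeds in $\Out(Q)\wr\Sy_r$; for HS and HC it embeds in $\Out(Q^{r/2})\times \Sy_2$ or a similar group. By Schreier's conjecture (via CFSG) $\Out(Q)$ is solvable. Hence the nonsolvable group $T^j$ maps injectively into the permutation group $P\leqslant\Sy_r$ induced by $X$ on the set of simple factors of $M$. In particular:
\begin{itemize}
\item in the almost simple case $r=1$, so $j=0$ immediately;
\item in the HS case $P\leqslant\Sy_2$, so again $j=0$.
\end{itemize}
Thus we may assume $r\geqslant2$ and that $G$ induces a nontrivial permutation group containing $T^j$ on the factors of $M$.

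\textbf{Step 2 (using transitivity of $G$).} Transitivity of $G$ gives the factorizations
\[
X=GX_\a \qquad\text{and}\qquad GM=G(GM)_\a .
\]
Put $H=G\cap M$. Then $H\unlhd G$, the $H$-orbits form a $G$-invariant partition, and $G/H\cong T^j$ acts transitively on this set of $H$-orbits. The plan is to compare $|\Om|=|M:M_\a|$ with $|G:G_\a|$, using the known shape of $M_\a$ in each type:
\begin{itemize}
\item in SD and CD types, $M_\a$ is a product of full diagonal subgroups;
\item in TW type, $M_\a=1$;
\item in PA type, $M_\a$ projects onto proper subgroups $R<Q$.
\end{itemize}
From these one derives that the projection of $G\cap M$ onto the factors of $M$ must already be transitive in each coordinate. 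Combined with the permutation action of $T^j$ on the factors, this should force $G$ to be a diagonal-like subgroup of a smaller product. The expected endpoint is that $T^j$ acting on the blocks of factors yields a factorization $Q=AB$ of a simple group, or of a component of the socle, in which one factor involves a section isomorphic to $T$ and a nontrivial permutational part.

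\textbf{Step 3 (main obstacle).} The hardest part is the PA and CD cases with $j>0$. There I would reduce, via the component $X_1$ (the stabilizer of one factor of $M$ projected to $\Aut(Q)$), to an almost simple group $X_1$ acting on $\Delta$, with the projection of $G$ giving a transitive subgroup. One then has to show that no transitive subgroup of the wreath product $X_1\wr P$ isomorphic to $T^k$ can have nontrivial top part. First I would try the standard argument that a transitive subgroup of a product action wreath product whose projection to the top group $P$ is nontrivial must have a point stabilizer that is not subdirect, contradicting the order count from Step 2. If that fails, I would fall back on the Liebeck--Praeger--Saxl classification of maximal factorizations of almost simple groups, used to rule out the factorizations that would arise from the base group. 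The TW case should follow more directly: $M$ is regular there, so $|\Om|=|Q|^r$, and an order and Sylow comparison with $|G|=|T|^k$, together with $GM=MG_\a$, should force $G\leqslant M$.
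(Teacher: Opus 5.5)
The paper gives no proof of this proposition; it quotes it from Daldegan--Schneider (DS2020, Theorem~1.2). Your Step~1 is a correct reduction, and it does not need the O'Nan--Scott list. A nontrivial $C_X(M)$ would contain a minimal normal subgroup of $X$, which lies in $M\cap C_X(M)=Z(M)=1$. Hence $X\leqslant \mathrm{Aut}(M)$ and $X/M$ embeds in $\mathrm{Out}(Q)\wr\mathrm{S}_r$. Write $G=(G\cap M)\times K$ with $K\cong T^j$. Since $\mathrm{Out}(Q)$ is solvable, $K$ acts faithfully on the $r$ simple factors of $M$. This disposes of the AS and HS types and forces $r\geqslant 5$ whenever $j>0$.

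The gap is that Steps~2 and~3 are a list of intentions, not an argument, and the remaining types (HC, SD, CD, TW, PA) are where the content of the theorem lies. After Step~1 you face this configuration: $G=(G\cap M)\times K$, $K\cap M=1$, $K$ is faithful on the factors, and $G\cap M\leqslant C_M(K)$. Here $C_M(K)$ is a product, over the $K$-orbits on factors, of diagonal-type subgroups, each isomorphic to a subgroup of $Q$. Nothing you write shows that such a $G$ cannot be transitive. Concretely:

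\emph{Coordinate projections.} The claim that $G\cap M$ projects ``transitively in each coordinate'' is unsupported. The usual way to get coordinate factorizations $Q_i=\pi_i(\cdot)R_i$ is to project $M=(G\cap M)M_\alpha$, which needs $G\cap M$ to be transitive; only $G$ is known to be. The factorization $GM=G(GM)_\alpha$ cannot be projected onto a coordinate, because $K$ moves the coordinates. So no factorization of $Q$, or of an almost simple component, is ever produced, and the fallback to Liebeck--Praeger--Saxl has nothing to act on.

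\emph{The ``standard argument''.} The claim about transitive subgroups of $X_1\wr P$ with nontrivial top projection is not an available result. It cannot hold as phrased, since it uses nothing about $G$: $X$ itself is transitive with nontrivial top projection, yet $X_\alpha\cap M=M_\alpha$ is subdirect in $R_1\times\cdots\times R_r$. A valid argument must use that the top part is $T^j$ centralizing $G\cap M$.

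\emph{The TW case.} The order/Sylow comparison is only asserted. The natural bound $|G|\leqslant|C_M(K)|\,|K|\leqslant|Q|^{s}\,r!$, with $s$ the number of $K$-orbits on factors, does not contradict $|G|\geqslant|\Omega|=|Q|^r$ when $r$ is large. Genuine structural input is needed there as well.

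As it stands, your proposal proves the statement only for the AS and HS types.
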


We shall also use the following elementary description of the normal subgroups of a nonabelian characteristically simple group, see for example \cite[Theorem 4.16]{PS2018}. 

\begin{proposition}\label{obs} 
Let $G=T^k$ be a characteristically simple group, where $T$ is nonabelian simple and $k\geqslant 1$, and let $K\unlhd G$. Then, after relabeling if necessary, $K=T_1\times\cdots\times T_l\cong T^l$ for some $0\leqslant l\leqslant k$.
\end{proposition}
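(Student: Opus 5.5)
We prove Proposition~\ref{obs} by induction on $k$, arguing about the minimal normal subgroups of $G$ directly. The case $K=1$ gives $l=0$, so assume $K\neq 1$.

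\textbf{Step 1: $K$ contains some $T_i$.} Pick $1\neq x=(x_1,\ldots,x_k)\in K$ and an index $i$ with $x_i\neq 1$. Since $T_i$ is nonabelian simple, $Z(T_i)=1$, so there is $t\in T_i$ with $[x_i,t]\neq 1$. Regard $t$ as an element of $G$ supported on the $i$th coordinate. Because $K\unlhd G$,
\[
[x,t]=x^{-1}x^{t}\in K.
\]
Its only nontrivial coordinate is $[x_i,t]$, so $K\cap T_i\neq 1$. Now $K\cap T_i$ is normal in $T_i$, and simplicity gives $T_i\leqslant K$.

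\textbf{Step 2: the reduction.} Relabel so that $T_1,\ldots,T_l$ are exactly the factors contained in $K$. Then $K\geqslant M:=T_1\times\cdots\times T_l$, and
\[
K=M\times\bigl(K\cap (T_{l+1}\times\cdots\times T_k)\bigr).
\]
This decomposition is Dedekind's modular law, since $M\leqslant K$ and $G=M\times(T_{l+1}\times\cdots\times T_k)$.

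\textbf{Step 3: conclusion.} Set $K'=K\cap(T_{l+1}\times\cdots\times T_k)$. It is normal in $T_{l+1}\times\cdots\times T_k$. If $K'\neq1$, Step~1 applied inside that product yields some $T_j\leqslant K'\leqslant K$ with $j>l$, contradicting the choice of $l$. Hence $K'=1$ and $K=T_1\times\cdots\times T_l\cong T^l$. With this argument no induction is actually needed.

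The only delicate point is Step~1, which relies on $T$ having trivial centre, a consequence of $T$ being nonabelian simple. The remainder is bookkeeping.
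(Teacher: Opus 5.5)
Your proof is correct. The paper gives no argument of its own here; it quotes the statement as a standard fact from \cite[Theorem 4.16]{PS2018}. Your route is the usual self-contained commutator proof of that fact. Step~1 uses only $Z(T_i)=1$ and the simplicity of $T_i$ to show that a nontrivial normal subgroup meets some factor nontrivially, and hence contains it. The Dedekind-law reduction in Steps~2--3 then forces $K$ to equal the product of exactly those factors it contains.

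Compared with the citation, your argument buys self-containment and a little extra generality. Nothing in it uses $T_i\cong T_j$ or characteristic simplicity. So it shows that the normal subgroups of any finite direct product of nonabelian simple groups are precisely the subproducts of the given factors. It also makes explicit that the ``relabeling'' in the statement refers to the given direct factors of $G$. This is the form in which the paper later uses the result, for instance to deduce $Q_i\cong T$ from $Q_i\unlhd G$. The citation buys only brevity.

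One expository point: you announce an induction on $k$ but never use it, as you note yourself at the end of Step~3. The opening sentence should simply be dropped.
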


Let $q>1$ and $a>1$ be integers. A prime $z$ is called a \emph{primitive prime divisor} of $q^a-1$ if $z\mid q^a-1$ but $z\nmid q^i-1$ for every $1\leqslant i<a$.
 
\begin{proposition}[Zsigmondy]\cite[Theorem IX.8.3]{HB1982}\label{prop:ppd}
Let $q>1$ and $n>1$. Then $q^n-1$ has a primitive prime divisor except when $(q,n)=(2,6)$, or when $n=2$ and $q+1$ is a power of 2. In particular, if $z$ is a primitive prime divisor of $q^n-1$, then $n\mid z-1$.
\end{proposition}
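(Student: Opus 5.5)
The statement immediately above is Zsigmondy's theorem, quoted from \cite[Theorem IX.8.3]{HB1982}. The paper does not reprove it, so I only sketch the classical argument, which rests on cyclotomic polynomials. Write $q^n-1=\prod_{d\mid n}\Phi_d(q)$. A prime $z\nmid q$ divides $q^i-1$ exactly when $e\mid i$, where $e$ is the multiplicative order of $q$ modulo $z$. Hence $z$ is a primitive prime divisor of $q^n-1$ if and only if $e=n$.

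The final assertion follows at once. By Fermat, $q^{z-1}\equiv 1 \pmod z$, so $n=e$ divides $z-1$.

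For existence I would first establish the standard lemma on prime divisors of $\Phi_n(q)$. If $z\mid\Phi_n(q)$ and $z$ is not primitive, then $e<n$ and $n=ez^a$ with $a\geqslant 1$. Moreover, $z$ is then the largest prime divisor of $n$, and $z\,\|\,\Phi_n(q)$, except in the case $z=2$, $n=2$. I would prove this by lifting the exponent: compare $v_z(q^{ez^a}-1)$ with $v_z(q^{ez^{a-1}}-1)$ and use that $\Phi_n(q)$ divides their quotient. Consequently, if $q^n-1$ has no primitive prime divisor, then $\Phi_n(q)$ divides $P$, the largest prime factor of $n$. This holds apart from the $n=2$ situation, where $\Phi_2(q)=q+1$ has no primitive prime divisor precisely when it is a power of $2$.

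The main step is the lower bound on $\Phi_n(q)$. Write $\Phi_n(q)=\prod(q-\zeta)$ over the primitive $n$th roots of unity $\zeta$. Each factor satisfies $|q-\zeta|>q-1\geqslant 1$, so $|\Phi_n(q)|\geqslant (q-1)^{\varphi(n)}$, with strict inequality for $n>1$. This settles the case $q\geqslant 3$ by a short estimate against $P\leqslant n$, using that $\varphi(n)\geqslant P-1$. When $q=2$, a sharper estimate is needed. I would use $|2-\zeta|\geqslant 1$ and pair conjugate factors, giving $|2-\zeta|^2=5-4\cos\theta$, which exceeds $P$ except for finitely many small $n$. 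A direct check of those small $n$ then isolates $(q,n)=(2,6)$, where $\Phi_6(2)=3$ divides $2^2-1$. I expect the $q=2$ bookkeeping to be the only fiddly part.
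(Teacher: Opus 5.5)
The paper gives no proof of this proposition; it quotes Huppert--Blackburn, and your outline is the standard cyclotomic proof. Everything up to and including the case $q\geqslant 3$ is sound:
\begin{itemize}
\item $z$ is primitive if and only if $\mathrm{ord}_z(q)=n$;
\item Fermat gives $n\mid z-1$, which is the consequence the paper actually uses to get $z>t$;
\item for $n>2$, a non-primitive prime divisor of $\Phi_n(q)$ is the largest prime $P$ of $n$ and divides $\Phi_n(q)$ exactly once;
\item so the absence of a primitive divisor forces $\Phi_n(q)\mid P$, and for $q\geqslant 3$ this contradicts $\Phi_n(q)>2^{\varphi(n)}\geqslant 2^{P-1}\geqslant P$.
\end{itemize}
When you write out the lemma, note that comparing $v_z(q^{ez^a}-1)$ with $v_z(q^{ez^{a-1}}-1)$ only gives the exponent statement. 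The shape $n=ez^a$ needs a companion fact: for a prime $p\neq z$ with $e\mid n/p$, one has $(q^n-1)/(q^{n/p}-1)\equiv p\not\equiv 0\pmod z$.

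The genuine gap is the case $q=2$. There the trivial bound $(q-1)^{\varphi(n)}=1$ gives nothing, and this is where the exception $(2,6)$ arises. Pairing conjugates gives $|2-\zeta|^2=5-4\cos\theta$, but that is only a per-factor identity. For primitive roots $e^{2\pi i k/n}$ with $k/n$ near $0$ or $1$, the factor is arbitrarily close to $1$. So concluding $\Phi_n(2)>P$ needs a lower bound on how many reduced residues $k$ modulo $n$ lie in a middle arc such as $[n/6,5n/6]$, where $|2-\zeta|\geqslant\sqrt3$. You do not provide this count, so ``exceeds $P$ except for finitely many small $n$'' is asserted rather than derived.

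The simplest repair is to drop the geometric estimate for $q=2$ and use M\"obius inversion:
\[
\Phi_n(2)=\prod_{d\mid n}(2^d-1)^{\mu(n/d)}=2^{\varphi(n)}\prod_{d\mid n}\bigl(1-2^{-d}\bigr)^{\mu(n/d)}.
\]
The factors with $\mu(n/d)=-1$ are at least $1$. Those with $\mu(n/d)=1$ involve distinct $d$, so their product is at least
\[
\prod_{j\geqslant 1}(1-2^{-j})\geqslant \tfrac12\cdot\tfrac34\cdot\bigl(1-\textstyle\sum_{j\geqslant 3}2^{-j}\bigr)=\tfrac{9}{32}>\tfrac14 .
\]
Hence $\Phi_n(2)>2^{\varphi(n)-2}$. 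Combined with $\Phi_n(2)\leqslant P\leqslant \varphi(n)+1$, this forces $\varphi(n)\leqslant 4$, that is, $n\in\{3,4,5,6,8,10,12\}$. The corresponding values $7,5,31,3,17,11,13$ of $\Phi_n(2)$ leave only $n=6$, where $\Phi_6(2)=3=P$.
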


\subsection{Graphs}

We first introduce several classes of graphs that will occur in the subsequent discussion.

\begin{example}\cite[Example 2.9]{H2024}
For integers $n\geqslant 2$, \textit{$n$-cube} $H(n,2)$ has vertex set $\{(x_1, x_2, \ldots, x_n)|x_i\in \{0, 1\}, i=1,\ldots,n\}$, and two vertices are adjacent if and only if they differ in exactly one coordinate. Simple observation shows that $H(n,2)$ is a Cayley graph of $H\cong \ZZ_2^n$ and valency $n$. 
\end{example}

\begin{example}\cite[Example 2.9]{H2024}\label{ex:folded-cube}
The \textit{folded $n$-cube} denoted by $\Box_n$, is the quotient of $H(n,2)$ by the partition into blocks of size 2 given by pairs of vertices that differ in all $n$ coordinates. Then $\Box_n$ is a 2-arc transitive Cayley graph of $\ZZ_2^{n-1}$ with valency $n$ and $\Aut(\Box_n)\cong \ZZ_2^{n-1}.\Sy_n$ whenever $n\geqslant 5$, $(\Sy_4\times\Sy_4)\rtimes\ZZ_2$ whenever $n=4$, and $\Sy_4$ whenever $n=3$. 
\end{example}

\begin{example}\cite[Example 5.1]{H2024}\label{ex:golay}
The graph $\Ga(C_{23})$ is a Cayley graph of $\ZZ_2^{11}$ with valency 23, diameter 3, and intersection array $\{b_0,b_1,b_2;c_1,c_2,c_3\}=\{23,22,21;1,2,3\}$, meaning that for any two vertices $x, y$ at distance $i$, exactly $c_i$ neighbors of $y$ lie at distance $i-1$ from $x$, and $b_i$ neighbors lie at distance $i+1$. Its full automorphism group is $\Aut(\Ga(C_{23}))\cong\ZZ_2^{11}.\M_{23}$.
The graph $\Ga(C_{22})$ is a Cayley graph of $\ZZ_2^{10}$ with valency 22, diameter 3, and intersection array
$\{22,21,20;1,2,6\}$. Moreover, $\Aut(\Ga(C_{22}))\cong\ZZ_2^{10}.(\M_{22}.2)$.
\end{example}

\begin{example}\cite[Section 1(1.3)]{IP1993}\label{ex:projective-space-graph}
Let $m\geqslant 3$, $a\geqslant 1$, $q=2^a$ and $n=(q^m-1)/(q-1)$. Let $\ZZ_2^n$ be the natural $\ZZ_2G_0$-permutation module, where $\PSL_m(q)\leqslant G_0\leqslant\PGammaL_m(q)$ acts on the $n$ points of $\PG_{m-1}(q)$. Let $W$ be the $\ZZ_2G_0$-submodule generated by the characteristic vectors of the complements of the hyperplanes of $\PG_{m-1}(q)$, and let $W^\perp$ be the orthogonal complement of $W$ in $\ZZ_2^n$ with respect to the standard bilinear form. Let $N=\ZZ_2^n/W^\perp$. The \emph{projective space graph} $P_m(a)$ has vertex set $N$, with $x+W^\perp$ adjacent to $y+W^\perp$ if and only if $x+y\in e_i+W^\perp$ for some standard basis vector $e_i$ of $\ZZ_2^n$. Thus $P_m(a)$ has valency $n$. By \cite[Theorem 5.3]{IP1993}, $N\cong \ZZ_2^{m^a}$ and $P_m(a)$ is a connected primitive affine $(X,2)$-arc-transitive graph, where $X=N\rtimes X_\a$, $\a=W^\perp$, $N$ is regular on $V(P_m(a))$ and $\PSL_m(q)\leqslant X_\a\leqslant \PGammaL_m(q)$. In particular, $P_m(1)\cong \K_{2^m}$.
\end{example}

The following property follows from \cite[Theorem 4.1]{P1993}.

\begin{proposition}\label{prop:NQ}
Let $\Ga$ be a connected $(X,2)$-arc-transitive graph, and let $N\unlhd X$ have at least three orbits on $V(\Ga)$. Then $N$ is semiregular on $V(\Ga)$, $X/N\leqslant \Aut(\Ga_N)$, $\Ga_N$ is $(X/N,2)$-arc-transitive, and $\Ga$ is an $N$-cover of $\Ga_N$. In particular, $\Ga$ and $\Ga_N$ have the same valency.
\end{proposition}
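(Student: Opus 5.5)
The plan is to work directly with the partition $\mathcal{B}$ of $V(\Ga)$ into $N$-orbits, which is $X$-invariant because $N\unlhd X$, and to use only connectivity and the $2$-transitivity of $X_\a$ on the neighbourhood $\Ga(\a)$. The latter is exactly what $(X,2)$-arc-transitivity gives, which is the content of \cite[Theorem 4.1]{P1993}.

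First I will show that no edge lies inside a block. If one edge did, then arc-transitivity of $X$ together with $X$-invariance of $\mathcal{B}$ would put every edge inside a block. Connectivity would then force a single block, contradicting the assumption that $N$ has at least three orbits. Next, the blocks meeting $\Ga(\a)$ induce an $X_\a$-invariant partition of $\Ga(\a)$. Since $X_\a$ is $2$-transitive on $\Ga(\a)$, this partition is either trivial (all parts singletons) or universal (one part). In the universal case, every vertex of the block $\a^N$ has all its neighbours in a single block $B'$, and the same holds with the roles reversed. Then $\a^N\cup B'$ is a union of a connected component, so $|\mathcal{B}|=2$, again a contradiction. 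Hence each block adjacent to $\a^N$ contains exactly one neighbour of $\a$. It follows that $\Ga_N$ has the same valency as $\Ga$, and that the edges between two adjacent blocks form a perfect matching, so $\Ga$ is an $N$-cover of $\Ga_N$.

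For semiregularity, I take $K$ to be the kernel of the action of $X$ on $\mathcal{B}$, so that $N\leqslant K$. An element of $K_\a$ fixes $\a$ and fixes every block setwise. Since each block adjacent to $\a^N$ contains a unique neighbour of $\a$, the element fixes every vertex of $\Ga(\a)$. Propagating this along paths and using connectivity shows that $K_\a=1$. Thus $K$, and hence $N$, is semiregular. Moreover $K$ and $N$ have the same orbits, so $|K|=|\a^K|=|\a^N|=|N|$, giving $K=N$. Therefore $X/N$ acts faithfully on $V(\Ga_N)$ and preserves adjacency, so $X/N\leqslant\Aut(\Ga_N)$.

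Finally, for $2$-arc-transitivity I will lift $2$-arcs. Given a $2$-arc $(B_0,B_1,B_2)$ of $\Ga_N$, choose $\a_0\in B_0$, let $\a_1$ be the unique neighbour of $\a_0$ in $B_1$, and let $\a_2$ be the unique neighbour of $\a_1$ in $B_2$. Since $B_2\neq B_0$, we have $\a_2\neq\a_0$, so $(\a_0,\a_1,\a_2)$ is a $2$-arc of $\Ga$. The map sending $2$-arcs of $\Ga$ to $2$-arcs of $\Ga_N$ is therefore $X$-equivariant and surjective. Transitivity of $X$ on the $2$-arcs of $\Ga$ then yields transitivity of $X/N$ on the $2$-arcs of $\Ga_N$.

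The only delicate point is ruling out the case where all neighbours of $\a$ lie in one block. This is where the hypothesis of at least three orbits is essential, since with two orbits $\Ga$ could be bipartite with $N$ intransitive but not semiregular.
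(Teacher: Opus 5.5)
Your proof is correct. The paper gives no argument of its own here; it simply quotes \cite[Theorem 4.1]{P1993}. What you have written is essentially the standard argument behind that theorem, so it works as a self-contained replacement for the citation. The argument runs as follows: $2$-transitivity, hence primitivity, of $X_\a$ on $\Ga(\a)$ forces each $N$-orbit to meet $\Ga(\a)$ in at most one vertex, since the alternative leaves at most two $N$-orbits; the kernel $K$ of $X$ on the $N$-orbits is then semiregular and hence equal to $N$; finally $2$-arcs lift from $\Ga_N$ to $\Ga$.

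Your version has the advantage of showing exactly where the three-orbit hypothesis enters. Two small points are worth making explicit. First, the map $(\a_0,\a_1,\a_2)\mapsto(\a_0^N,\a_1^N,\a_2^N)$ actually lands in $2$-arcs of $\Ga_N$: $\a_0\neq\a_2$ are both neighbours of $\a_1$, so they lie in different $N$-orbits. Second, the conclusion $K_\a=1$ uses that $X\leqslant\Aut(\Ga)$ acts faithfully on $V(\Ga)$.
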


\section{A structural reduction}\label{sec3}

The following lemma gives a general description of the possibilities for the automorphism group of a connected Cayley graph of a finite characteristically simple group, based on \cite{DS2020}.

\begin{lemma}\label{lem:framework}
Let $G=T_1\times\cdots\times T_k\cong T^k$, where $T$ is finite nonabelian simple and $k\geqslant2$. Let $\Ga=\Cay(G,S)$ be connected, where $G\leqslant X\leqslant \Aut(\Ga)$. Then the following hold.
\begin{enumerate}[leftmargin=1.2em]
\item Suppose that $X$ is quasiprimitive on $V(\Ga)$ and that $\soc(X)=Q_1\times\cdots\times Q_r\cong Q^r$, where each $Q_i\cong Q$ is nonabelian simple. Then either $G\unlhd X$, or one of the following holds.
    \begin{enumerate}[leftmargin=1.5em]
    \item $X$ is of almost simple type and $\Ga$ is complete.
    \item $X$ is of simple diagonal type with unique minimal normal subgroup $\soc(X)\cong T^{k+1}$.
    \item $X$ is of product action type, and one of the following holds.
        \begin{enumerate}[leftmargin=1.5em]
        \item $\soc(X)\cong T^r$ for some $r>k\geqslant2$.
        \item $k=r$ and, after relabeling, $T_i<Q_i$ for every $i$.
        \item the subgroups $T_i$ are pairwise disjoint strips in $\soc(X)$, $|\mathcal{P}(T_i)|\in\{1,2\}$ for every $i$, and $|\mathcal{P}(T_j)|=2$ for some $j$. Moreover, $(T,Q)$ is listed in Table \ref{tab:groups}.
        \item $\soc(X)\cong(\A_n)^r$, $\soc(X)_\a\cong(\A_{n-1})^r$, where $|V(\Ga)|=n^r$ and $n\geqslant10$.
        \end{enumerate}
    \end{enumerate}
\item Suppose that $X$ is not quasiprimitive on $V(\Ga)$, and let $N\unlhd X$ be maximal intransitive on $V(\Ga)$. Let $\overline{X}=X/N$ and $\overline{G}=GN/N$. Then $\overline{X}$ is quasiprimitive on $V(\Ga_N)$ and $\overline{G}\cong T^\ell$ for some $1\leqslant\ell\leqslant k$. Let $\overline{G}=T_1\times\cdots\times T_\ell$ and $\soc(\overline{X})=P_1\times\cdots\times P_s$, where $P_i\cong P$ for some simple group $P$. Then one of the following five types occurs.
    \begin{enumerate}[leftmargin=1.5em]
    \item $\overline{X}$ is of affine type, $T\cong \PSL_2(7)$ and either $\Ga_N\cong \K_8$ for $\ell=1$, or $\Ga_N$ is a Cayley graph of $\ZZ_2^{3\ell}$ for $\ell\geqslant 2$.
    \item $\overline{X}$ is of almost simple type and $\overline{G}\leqslant \soc(\overline{X})$ is transitive on $V(\Ga_N)$. If $\ell\geqslant 2$, then $\Ga_N$ is complete.
    \item $\overline{X}$ is of simple diagonal type. Then one of the following holds.
        \begin{enumerate}[leftmargin=1.5em]
        \item $\ell=1$, $\soc(\overline{X})\cong T^2$, and $\overline{G}\cong T$ is regular on $V(\Ga_N)$. 
        \item $\ell\geqslant 2$, $\soc(\overline{X})\cong T^s$, $|V(\Ga_N)|=|T|^{s-1}$.
        \item $\ell=s=2$, $|V(\Ga_N)|=|P|$ and $(T,P)=(\A_5,\A_6)$, $(\M_{11},\M_{12})$ or $(\Omega_7(q),\POmega_8^+(q))$.
        \end{enumerate}
    \item $\overline{X}$ is of product action type. Then one of the following holds.
        \begin{enumerate}[leftmargin=1.5em]
        \item $\soc(\overline{X})\cong T^s$ for some $s\geqslant\ell\geqslant 2$.
        \item $\ell=1$, $s=2$, and $(T,P)$ is listed in Table \ref{tab:groups}.
        \item $\ell=s\geqslant 2$ and, after relabeling, $T_i<P_i$ for every $i$.
        \item $\ell\geqslant 2$, the subgroups $T_i$ are pairwise disjoint strips in $\soc(\overline{X})$, $|\mathcal{P}(T_i)|\in\{1,2\}$ for every $i$, and $|\mathcal{P}(T_j)|=2$ for some $j$. In addition, $(T,P)$ is listed in Table \ref{tab:groups}.
        \item $\ell\geqslant 2$ and $\soc(\overline{X})\cong(\A_t)^s$, where $|V(\Ga_N)|=t^s$ and $t\geqslant 10$.
        \end{enumerate}
    \item $\overline{X}$ is of twisted wreath type, and $\Ga_N$ is a Cayley graph of $\overline{G}=\soc(\overline{X})$.
    \end{enumerate}
\end{enumerate}
\end{lemma}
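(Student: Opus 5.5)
The plan is to reduce everything to the classification of Daldegan and Schneider \cite{DS2020} of transitive nonabelian characteristically simple subgroups of quasiprimitive groups, applied once to $X$ in part (1) and once to $\overline{X}$ in part (2).

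For part (1), let $X$ be quasiprimitive with $\soc(X)\cong Q^r$ nonabelian. By Proposition~\ref{subgroup} we have $G\leqslant\soc(X)$, and $G$ is regular on $V(\Ga)$. We then run through the O'Nan--Scott types admitting a transitive subgroup $T^k$ with $k\geqslant2$, using the case list of \cite[Theorem~1.2]{DS2020}.

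\begin{itemize}
\item \textbf{Holomorph and twisted wreath types.} Here we aim to show $G=\soc(X)$, hence $G\unlhd X$. For twisted wreath type, a regular subgroup of the socle that is isomorphic to $T^k$ must equal the regular socle. For the holomorph types, $G$ is either a regular minimal normal subgroup or one of the diagonal configurations, and the latter we fold into the simple diagonal case.
\item \textbf{Almost simple type.} We have $G\cong T^k$ transitive inside a simple group $Q$, with $k\geqslant2$. The list in \cite{DS2020} (factorisations $Q=GQ_\a$ with $G$ non-simple) forces $Q_\a$ to be transitive on the remaining points. So $X$ is $2$-transitive and $\Ga$ is complete. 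For Cayley graphs one could alternatively invoke the $2$-transitivity directly.
\item \textbf{Simple diagonal type.} Regularity of $G$ on $|T|^{r-1}$ points, with $G$ a subgroup of $Q^r$ projecting suitably, gives $Q\cong T$ and $r=k+1$.
\item \textbf{Product action type.} Project $G$ onto the factors $Q_i$. Let $\pi_i(G)$ be the images; by Proposition~\ref{obs} the $\pi_i(G)$ are quotients of $T^k$, hence direct products of copies of $T$. Comparing these with the classification yields the four subcases (i)--(iv): more factors than $k$ with $Q\cong T$; a one-to-one matching $T_i<Q_i$; strips of length at most $2$, which requires the factorisation data recorded in Table~\ref{tab:groups}; or the exceptional alternating case $\A_{n-1}<\A_n$ with $n^r$ points.
\end{itemize}

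For part (2), let $N$ be maximal intransitive normal in $X$. Standard normal quotient theory gives that $\overline{X}$ is faithful and quasiprimitive on $V(\Ga_N)$. Note that $N$ is not required to have at least three orbits here, only to be maximal intransitive. Since $GN/N\cong G/(G\cap N)$ and $G\cap N\unlhd G$, Proposition~\ref{obs} gives $\overline{G}\cong T^\ell$ with $\ell\geqslant1$, using that $G\not\leqslant N$ because $G$ is transitive. Now $\overline{G}$ is transitive on $V(\Ga_N)$, though no longer necessarily regular, and we reapply \cite{DS2020} type by type.

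\begin{itemize}
\item \textbf{Abelian socle.} An affine quasiprimitive group containing a transitive nonabelian characteristically simple subgroup forces $\overline{G}$ to act transitively on $2^{3\ell}$ points. By the DS2020 classification this means $T\cong\PSL_2(7)$ acting on $\ZZ_2^3$ per factor; for $\ell=1$ this gives $\K_8$.
\item \textbf{Almost simple type.} $\overline{G}\leqslant\soc(\overline{X})$ by Proposition~\ref{subgroup}, and the $\ell\geqslant2$ analysis of part (1) gives completeness.
\item \textbf{Simple diagonal and product action types.} These mirror part (1), except that $\overline{G}$ need not be regular. This is what allows the extra cases $\ell=1$ with $s=2$, and the exceptional factorisations $(\A_5,\A_6)$, $(\M_{11},\M_{12})$ and $(\Omega_7(q),\POmega_8^+(q))$, all of which come from maximal factorisations with a diagonal.
\item \textbf{Twisted wreath type.} \cite{DS2020} forces $\overline{G}=\soc(\overline{X})$, which is regular, so $\Ga_N$ is a Cayley graph of it.
\end{itemize}

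The main obstacle is the product-action bookkeeping in both parts: translating the strip structure of $G$ inside $Q^r$ into exactly the listed subcases. I would handle it by fixing a component $\pi_i(G)$ and using the point-stabiliser factorisation $Q_i=\pi_i(G)(Q_i)_{\a_i}$. By the factorisation classification, this either makes $\pi_i(G)\cong T$ with $T<Q$, or gives a diagonal $T^2$, which yields Table~\ref{tab:groups}, or gives $\A_{n-1}$ stabilisers.
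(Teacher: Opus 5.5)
Your top-level strategy, invoking the Daldegan--Schneider classification type by type, is the same black-box route the paper takes. However, the part~(2) plan has a real hole at $\ell=1$. You say the simple diagonal and product action cases ``mirror part~(1), except that $\overline{G}$ need not be regular,'' and that non-regularity is what produces the extra cases with $\ell=1$, $s=2$. That is not where they come from. Part~(1) assumes $k\geqslant2$, so it says nothing when $\overline{G}\cong T$ is simple, and in (2)(c)(i) $\overline{G}$ is in fact regular. The $\ell=1$ parts of the statement are (2)(a) with $\overline{X}\cong\AGL_3(2)$, (2)(c)(i) and (2)(d)(ii). In (2)(a) you assert $\Ga_N\cong\K_8$ without the 2-transitivity argument it needs. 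All three come from the classification of quasiprimitive groups containing a transitive nonabelian \emph{simple} subgroup, and the paper treats them separately. It uses \cite[Propositions 5.2, 6.1, 6.2]{P1990} and \cite[Theorem 1.2]{BP2003} when $\overline{X}$ is primitive, and \cite[Theorem 2]{P2003} when $\overline{X}$ is quasiprimitive but imprimitive. The last of these is where the pairs $(\Sp_4(2^a),\Sp_{4b}(2^{a/b}))$ of Table~\ref{tab:groups} enter (2)(d)(ii). You need to split $\ell=1$ from $\ell\geqslant2$ and supply these inputs; re-running the $k\geqslant2$ analysis does not reach them.

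There are also two misroutings in part~(1).
\begin{enumerate}
\item The holomorph configurations cannot be folded into the simple diagonal case. Case (1)(b) asserts a \emph{unique} minimal normal subgroup $T^{k+1}$, whereas HS and HC groups have two. HS cannot occur for $k\geqslant2$, and HC belongs to product action type in Praeger's scheme. A regular $G$ assembled from simple factors of both minimal normal subgroups lands in (1)(c)(i), with $\soc(X)\cong T^{2k}$.
\item Your product-action bookkeeping uses the coordinatewise factorizations $Q_i=\pi_i(G)R_i$, with $R_i$ the projection of $\soc(X)_\a$. These cannot by themselves recover the global strip structure, namely pairwise disjoint supports of size at most $2$. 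Your trichotomy also omits $T\cong Q$, i.e.\ case (1)(c)(i) and full-diagonal strips such as $(\A_6,\A_6)$ or $(\M_{12},\M_{12})$.
\end{enumerate}
The paper sidesteps both by first asking whether $G$ contains a minimal normal subgroup $Q_i$ of $\soc(X)$.
\begin{itemize}
\item If it does, Proposition~\ref{obs} gives $Q\cong T$, and an order count finishes each type: $r=k+1$ for simple diagonal, (1)(c)(i) for product action, and almost simple and twisted wreath are impossible.
\item If it does not, \cite[Corollary 1.4]{DS2020} leaves only almost simple and product action types, and \cite[Theorem 1.3]{DS2020} yields (1)(a) and (1)(c)(ii)--(iv).
\end{itemize}
This dichotomy is also what justifies your unproved claim that $Q\cong T$ in the simple diagonal case.
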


\begin{table}[htbp]
\centering
\renewcommand{\arraystretch}{1.25}
\setlength{\tabcolsep}{7pt}
\caption{Pairs $(T,Q)$ in the strip case.}
\label{tab:groups}
{\scriptsize\begin{tabular}{|c|c|c|c|c|c|}
\hline
$T$ & $Q$ & Condition          & $T$ & $Q$ & Condition \\
\hline
$\A_6$ & $\A_6$ &              & $\POmega_8^+(q)$ & $\A_n$ & $n=|\POmega_8^+(q):\Omega_7(q)|$ \\
\hline
$\M_{12}$ & $\M_{12}$ &             & $\POmega_8^+(2)$ & $\Sp_8(2)$ & \\
\hline
$\M_{12}$ & $\A_{12}$ &             & $\Sp_4(2^a)$ & $\Sp_{4b}(2^{a/b})$ & $a\geqslant 2$, $b\mid a$ \\
\hline
$\POmega_8^+(q)$ & $\POmega_8^+(q)$ & $q\geqslant 2$            & $\Sp_4(2^a)$ & $\A_n$ & $a\geqslant 2$, $n=|\Sp_4(2^a):\Sp_2(2^{2a}).\ZZ_2|$ \\
\hline
\end{tabular}}
\end{table}

\pf\, Suppose first that $X$ is quasiprimitive. If $G\unlhd X$, then the conclusion follows from the normality of $G$ in $X$. We may therefore assume that $G\ntrianglelefteq X$. Since $|V(\Ga)|=|T|^k$ is not a prime power, $X$ is not of affine type. Thus $\soc(X)$ is nonabelian, and Proposition \ref{subgroup} gives $G\leqslant\soc(X)$. Let $\soc(X)=Q_1\times\cdots\times Q_r\cong Q^r$, where each $Q_i\cong Q$ is finite simple. The equality $G=\soc(X)$ would imply $G\unlhd X$, contrary to the assumption. Hence $G<\soc(X)$.

Assume that $G$ contains a minimal normal subgroup $Q_i$ of $\soc(X)$. Since $Q_i\unlhd\soc(X)$ and $G\leqslant \soc(X)$, we have $Q_i\unlhd G$. Proposition \ref{obs} implies that $Q_i$ is one of the simple direct factors of $G$; in particular, $Q\cong T$ and $\soc(X)\cong T^r$. Since $G<\soc(X)$, we get that  $r>k$. The almost simple type cannot occur because $r\geqslant 2$. The twisted wreath type cannot occur because its socle is regular on $V(\Ga)$. The transitive subgroup $G\leqslant \soc(X)$ would then satisfy $G=\soc(X)$, contrary to $G\ntrianglelefteq X$. If $X$ is of simple diagonal type, then $\soc(X)_\a\cong T$ and $|T|^k=|V(\Ga)|=|\soc(X):\soc(X)_\a|=|T|^{r-1}$. Thus $r=k+1$, which gives part (1)(b). The product action type gives part (1)(c)(i).

Next assume that $G$ contains no minimal normal subgroup of $\soc(X)$. By \cite[Corollary 1.4]{DS2020}, the group $X$ is either of almost simple type or of product action type arising from a group of almost simple type. In the former case, \cite[Theorem 1.3 (1)]{DS2020} shows that $\soc(X)=\A_n$ in its natural action of degree $n\geqslant 10$; and so $\Ga\cong \K_n$. In the latter case, \cite[Theorem 1.3 (3)]{DS2020} gives parts (1)(c)(ii)--(iv). This proves part (1).

Suppose now that $X$ is not quasiprimitive. The maximal intransitive normal subgroup $N$ has at least three orbits; otherwise, two orbits would induce an index-2 subgroup of $G$, contrary to $G'=G$. 
Let $\mathcal{B}$ be the set of $N$-orbits on $V(\Ga)$, and let $K$ be the kernel of the action of $X$ on $\mathcal{B}$. Then $N\leqslant K$ and $K$ is intransitive. The maximality of $N$ gives $K=N$. Thus $X/N$ acts faithfully on $V(\Ga_N)$, and $X/N\leqslant\Aut(\Ga_N)$.
If $M/N$ is a nontrivial normal subgroup of $X/N$ that is intransitive on $\mathcal{B}$, then $M$ is an intransitive normal subgroup of $X$ properly containing $N$, a contradiction. Thus $\overline{X}=X/N$ is quasiprimitive on $V(\Ga_N)$. 
Let $\soc(\overline{X})=P_1\times\cdots\times P_s\cong P^s$, where each $P_i\cong P$ is finite simple.
The subgroup $G\cap N$ is normal in $G$, and $G\nleqslant N$ because $G$ is transitive whereas $N$ is intransitive. Proposition \ref{obs} and the second isomorphism theorem give $\overline{G}=GN/N\cong G/(G\cap N)\cong T^\ell$ for some $1\leqslant\ell\leqslant k$. Note that $\overline{G}$ is transitive on $V(\Ga_N)$.

Assume first that $\ell=1$. Then $\overline{G}\cong T$ is a transitive nonabelian simple subgroup of $\overline{X}$. 
If $\overline{X}$ is primitive, \cite[Proposition 6.1]{P1990} and \cite[Theorem 1.2]{BP2003} give the affine, almost simple, simple diagonal and product action types relevant here. 
For the affine type, \cite[Proposition 5.2]{P1990} gives $T\cong \PSL_2(7)$, $\overline{X}\cong \AGL_3(2)$ and $|V(\Ga_N)|=8$, so $\Ga_N\cong \K_8$. 
Under the almost simple type, \cite[Proposition 6.2]{P1990} gives $\overline{G}\leqslant \soc(\overline{X})$. 
The simple diagonal type is described in \cite[Section 4.3]{BP2003}, where $\soc(\overline{X})\cong T^2$ and $\overline{G}\cong T$ is regular on $V(\Ga_N)$, giving part (2)(c)(i). 
For product action, \cite[Section 4.5]{BP2003} gives $\soc(\overline{X})\cong P^2$ and one of the pairs $(T,P)$ listed in Table \ref{tab:groups}. 
If $\overline{X}$ is quasiprimitive but imprimitive, \cite[Theorem 2]{P2003} gives either the almost simple type, with $\overline{G}$ transitive in $\soc(\overline{X})$, or the product action type, with $\soc(\overline{X})\cong P^2$, $T\cong \Sp_4(2^a)$ and $P\cong \Sp_{4b}(2^{a/b})$ for the parameters in Table \ref{tab:groups}. Thus part (2) holds when $\ell=1$.

Assume that $\ell\geqslant 2$. If $\soc(\overline{X})$ is abelian, then $\overline{X}$ is of affine type. By \cite[Theorem 1.3 (0)]{DS2020}, $T\cong \SL_3(2)\cong \PSL_2(7)$ and $|V(\Ga_N)|=8^\ell$. The regularity of $\soc(\overline{X})$ gives $\soc(\overline{X})\cong \ZZ_2^{3\ell}$, yielding part (2)(a).

Suppose now that $\soc(\overline{X})$ is nonabelian. Proposition \ref{subgroup} gives $\overline{G}\leqslant \soc(\overline{X})$. 
If $\overline{G}$ contains a minimal normal subgroup of $\soc(\overline{X})$, then Proposition \ref{obs} gives $P\cong T$ and so $\soc(\overline{X})\cong T^s$. The almost simple type is excluded by $\ell\geqslant 2$. For the simple diagonal type, a point stabilizer in $\soc(\overline{X})$ is isomorphic to $T$, and so $|V(\Ga_N)|=|T|^{s-1}$, giving part (2)(c)(ii). 
The product action type has $\overline{G}\cong T^\ell\leqslant T^s$, whence $\ell\leqslant s$, and yields part (2)(d)(i). 
In the twisted wreath case, $\soc(\overline{X})$ is regular on $V(\Ga_N)$, while $\overline{G}\leqslant \soc(\overline{X})$ is transitive. Hence $|\overline{G}|\geqslant|V(\Ga_N)|=|\soc(\overline{X})|$, and the inclusion forces $\overline{G}=\soc(\overline{X})$, giving part (2)(e).

It remains to consider the case where $\overline{G}$ contains no minimal normal subgroup of $\soc(\overline{X})$. By \cite[Theorem 1.3]{DS2020}, the remaining possibilities occur in the almost simple, simple diagonal and product action types. The almost simple possibility has $\overline{X}=\A_n$ or $\Sy_n$ in its natural action, so $\Ga_N$ is complete. For the simple diagonal type, $\ell=s=2$, $T_i<P_i$, and $(T,P)$ is one of $(\A_5,\A_6)$, $(\M_{11},\M_{12})$ or $(\Omega_7(q),\POmega_8^+(q))$ with $q\geqslant2$, giving part (2)(c)(iii). The product action possibilities give parts (2)(d)(iii)--(v), while no additional twisted wreath case occurs.  \qed

\section{The quasiprimitive case}\label{sec4}

In this section, we always let $\Ga=\Cay(G,S)$ be a connected $(X,2)$-arc-transitive Cayley graph of finite group $G=T_1\times\cdots\times T_k\cong T^k$, where each $T_i\cong T$ is finite nonabelian simple, $k$ is an integer with $k\geqslant 2$, and $G<X\leqslant \Aut(\Ga)$. Suppose that $X$ is quasiprimitive on $V(\Ga)$. Then $X$ is of one of the following five types: affine, almost simple, simple diagonal, product action or twisted wreath (see \cite[Theorem 1]{P1993} for more details). The quasiprimitive case under Hypothesis \ref{hypothesis} is given by the following theorem.

\begin{theorem}\label{thm:qprim}
Let $\Ga$ be as in Hypothesis \ref{hypothesis}. Suppose that $X$ is quasiprimitive on $V(\Ga)$. Then $\Ga$ is $X$-normal, $\Ga$ is complete, or $X$ is of product action type with $\soc(X)\cong Q^k$ and $(Q,T,k)$ listed in Table \ref{tab:exact-pa}.
\end{theorem}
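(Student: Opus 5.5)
We start from part (1) of Lemma \ref{lem:framework}. Assume $G\ntrianglelefteq X$. Then $X$ is one of four things: almost simple with $\Ga$ complete; simple diagonal with $\soc(X)\cong T^{k+1}$; or product action in one of the subcases (1)(c)(i)--(iv). The complete case is one of the permitted conclusions, so the work is to exclude the remaining cases or pin them down using 2-arc-transitivity. In the product action case, the target is $\soc(X)\cong Q^k$ with $(Q,T,k)$ in Table \ref{tab:exact-pa}.

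\emph{The simple diagonal case.} Here $M:=\soc(X)\cong T^{k+1}$ and $M_\a\cong T$ is a full diagonal subgroup. The tool is the known fact that a 2-arc-transitive quasiprimitive group is never of simple diagonal type; this follows from Praeger's theorem \cite[Theorem 2]{P1993}, whose list of possible types excludes it. For a self-contained argument, we would argue as follows.
\begin{itemize}
\item $X_\a\leqslant \Aut(T)\times \Sy_{k+1}$, and $X_\a$ acts 2-transitively on the neighbourhood $\Ga(\a)$.
\item The neighbours of $\a$ are coset-type points $M_\a g$. The action of $M_\a\cong T$ on $\Ga(\a)$ is a normal subgroup of the 2-transitive group $X_\a^{\Ga(\a)}$.
\item The action of $T$ on a diagonal coset is by conjugation. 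This forces nontrivial point stabilisers, namely centralisers, and these are incompatible with $X_\a^{\Ga(\a)}$ being 2-transitive. The one possible exception is when the socle of $X_\a^{\Ga(\a)}$ is $T$ itself acting 2-transitively, and that can be ruled out by comparing with the conjugation action.
\end{itemize}
We will simply cite \cite[Theorem 2]{P1993}.

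\emph{Product action with $r\neq k$.} For (1)(c)(i) we have $\soc(X)\cong T^r$ with $r>k$, and $G$ contains some $T_i$ that is minimal normal in $\soc(X)$. For (1)(c)(iv) we have $\soc(X)=(\A_n)^r$ with $\soc(X)_\a=(\A_{n-1})^r$. Our plan is to use the structure of 2-arc-transitive product-action quasiprimitive groups from \cite{P1993}, following Li--Praeger type results: the graph is then a normal cover of, or closely tied to, a direct-product-type structure with $X\leqslant H\wr \Sy_r$, where $H$ is almost simple and 2-transitive of a restricted form, namely $X\leqslant H\wr\Sy_r$ acting on $\Delta^r$.
\begin{itemize}
\item In case (iv), $|V(\Ga)|=n^r=|T|^k$. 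Take $p$ the largest prime dividing $|T|$. Then $v_p$ of $n^r$ is $r\,v_p(n)$, so $r$ divides $k\,v_p(|T|)$. Combining this with the requirement that the stabiliser $\soc(X)_\a=(\A_{n-1})^r$ acts on $\Ga(\a)$ through a 2-transitive group of bounded rank, we aim to show that this case is impossible.
\item In case (i), $T^k<T^r$ is transitive with $|T|^k=|T^r:T^r_\a|$, so $|T^r_\a|=|T|^{r-k}$. The 2-transitive local action then has to be compatible with this.
\end{itemize}
I expect this step, controlling the product-action cases via the local action, to be the main obstacle. What I would try first is to show that $X_\a^{\Ga(\a)}$ is a 2-transitive group whose socle projects onto a single coordinate. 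That would force $r$ to be small relative to $k$ and produce the contradictions above.

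\emph{The remaining product action cases.} Cases (1)(c)(ii) and (1)(c)(iii) have $r=k$, or $r\in[k,2k]$ with strips of length at most $2$. We reduce both to $r=k$ by showing that strips of length $2$ are incompatible with 2-arc-transitivity. We then use $T_i<Q_i$ together with a factorisation $Q=T\,Q_\delta$ coming from the transitivity of $T_i$ on each coordinate. Known factorisations of almost simple groups, restricted to those with a 2-transitive component action, yield the triples $(Q,T,k)$ of Table \ref{tab:exact-pa}.
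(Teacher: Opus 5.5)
Your reduction through Lemma \ref{lem:framework}(1) and your citation of \cite[Theorem 2]{P1993} for the simple diagonal case match the paper. The product action case, however, is the whole content of Lemma \ref{lem:PA1}, and you do not prove it. For (1)(c)(i) and (iv) you only state an aim. Your proposed mechanism, that the socle of $X_\a^{\Ga(\a)}$ projects onto one coordinate, is unjustified and is not what controls the situation, and $r\mid k\,v_p(|T|)$ leads nowhere.

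\textbf{The missing idea: a dichotomy for $L_\a$.} Write $L=\soc(X)$ and $R_i=\pi_i(L_\a)<Q_i$. Since $G$ is perfect, $\Ga$ is non-bipartite, so $X=\langle X_\a,X_\b\rangle$ for an edge $\{\a,\b\}$. Then \cite[Theorem 1.1]{Lu2026} gives two cases.
\begin{enumerate}
\item[(A)] Every $\pi_i|_{L_\a}$ is injective.
\item[(B)] $L_\a\cong(\ZZ_p^t\times\ZZ_{m_1}).\ZZ_m$ is solvable. Then \cite[Theorem 4.2]{LS2023} makes each $Q_i$ semiregular, which forces $r\geqslant 3$.
\end{enumerate}

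\textbf{How case (A) disposes of the subcases.} Here $|L_\a|=|R_i|<|Q|$.
\begin{enumerate}
\item[(i)] This is excluded, because there $|L_\a|=|Q|^{r-k}\geqslant |Q|$.
\item[(iv)] This is excluded, because $|\A_{n-1}|^r>|\A_n|$.
\item[(iii)] The bound also yields $\Pi(|Q|)=\Pi(|T|)$, so strips die by Lemma \ref{lem:strip-pair-exclusion}(i).
\end{enumerate}

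\textbf{How case (B) disposes of the subcases.}
\begin{enumerate}
\item[(iv)] Solvability of $L_\a$ excludes it.
\item[(iii)] Strips die by Lemma \ref{lem:strip-pair-exclusion}(ii).
\item[(i)] The support argument gives $Q_1=R_2^{\varphi^{-1}}R_1$ with isomorphic solvable proper factors, which \cite[Lemma 5.2]{BPS2004} excludes.
\item[(ii)] Primitive-prime-divisor arithmetic on $|Q:T|^r=p^tm_1m$ excludes it.
\end{enumerate}
Your claim that length-2 strips are ``incompatible with 2-arc-transitivity'' has no argument behind it. In the paper they are excluded only through these arithmetic properties of the specific pairs in Table \ref{tab:groups}.

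\textbf{Your endgame is too weak.} The factorization $Q=TQ_\delta$ alone does not cut the triples down to Table \ref{tab:exact-pa}. Two further ingredients are needed.
\begin{enumerate}
\item $|L_\a|=|Q:T|^k$ with $L_\a$ isomorphic to a proper subgroup of $Q$. Together with $\Pi(|Q|)=\Pi(|T|)$ and \cite{LPS2000}, this gives Table \ref{tab:equal-prime-r} via Lemma \ref{lem:equal-prime-divisors}.
\item An exact factorization $Q_i=T_iB_i$ with $T_i\cap B_i=1$ and $|B_i|=|Q:T|$. It is built from $H_j=\{x\in L_\a\mid \pi_j(x)\in T_j\}$ and $B_i=\pi_i(\bigcap_{j\neq i}H_j)$, using $\bigcap_j H_j=G\cap L_\a=1$.
\end{enumerate}
Only after these are the factorization classifications (and, for $\M_{12}$, a computation) applied.

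Restricting to ``2-transitive component action'' is also unjustified. 2-arc-transitivity concerns $X_\a$ on $\Ga(\a)$, not $Q$ on $[Q:R_i]$. For example, for $(\M_{24},\M_{23},2)$ one has $|R_i|=24^2$, so the component degree is $|\M_{23}|/24$. But $\M_{24}$ is 2-transitive only in degree $24$, so your restriction would silently discard a candidate the theorem must list.
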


The five types are considered separately.

\begin{lemma}\label{lem:affine1}
The group $X$ is not of affine type.
\end{lemma}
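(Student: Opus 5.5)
The plan is a counting argument. If $X$ were of affine type, then $\soc(X)$ would be an elementary abelian $p$-group acting regularly on $V(\Ga)$. In that case $|V(\Ga)|=|\soc(X)|$ would be a power of the prime $p$. We then compare this with the Cayley structure of $\Ga$ over $G$.

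First, recall from Hypothesis \ref{hypothesis} that $\Ga=\Cay(G,S)$ with $G\cong T^k$ acting regularly. Hence $|V(\Ga)|=|G|=|T|^k$.

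Second, use that $T$ is a finite nonabelian simple group. By Burnside's $p^aq^b$ theorem, or simply because a group of prime-power order is nilpotent and so solvable, $|T|$ is divisible by at least three distinct primes. In particular $|T|$ is not a prime power, and therefore neither is $|T|^k$.

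Third, suppose that $X$ is quasiprimitive of affine type. Its socle $\soc(X)\cong \ZZ_p^d$ is a minimal normal subgroup, hence transitive by quasiprimitivity. Being abelian and transitive, it is regular. This gives $|V(\Ga)|=p^d$, contradicting the previous step. So $X$ is not of affine type.

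This is the same observation already used at the start of the proof of Lemma \ref{lem:framework}(1). There is no real obstacle; the only point needing care is the standard fact that a transitive abelian permutation group is regular.
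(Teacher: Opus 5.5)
Your proof is correct and is essentially the paper's argument: the elementary abelian socle is regular, which forces $|V(\Ga)|=p^d$, while the regularity of $G$ gives $|V(\Ga)|=|T|^k$, and $|T|^k$ is not a prime power. One small nit: the nilpotence remark shows only that $|T|$ is not a prime power, not that it has three prime divisors, but that weaker fact is all you need.
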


\pf\, Suppose that $X$ is of affine type. Then $\soc(X)\cong \ZZ_p^d$ for some prime $p$ and positive integer $d$, and $\soc(X)$ is regular on $V(\Ga)$. 
Both $\soc(X)$ and $G$ are regular on $V(\Ga)$, so $|G|=|V(\Ga)|=p^d$. This is impossible because $G\cong T^k$ with $T$ nonabelian simple. \qed

\begin{lemma}\label{lem:AS1}
Suppose that $X$ is of almost simple type. Then either $\Ga$ is $X$-normal or $\Ga$ is complete.
\end{lemma}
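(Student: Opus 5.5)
The plan is to read this off Lemma \ref{lem:framework}(1). Assume $X$ is of almost simple type and $G\ntrianglelefteq X$, and we show that $\Ga$ is complete. Write $L=\soc(X)$, a nonabelian simple group; here $r=1$.

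First, by Proposition \ref{subgroup}, the transitive subgroup $G\cong T^k$ satisfies $G\leqslant L$. Since $G\ntrianglelefteq X$ we have $G\neq L$, so $G<L$. Next we ask whether $G$ can contain a minimal normal subgroup of $L$. The only such subgroup is $L$ itself, and $L$ is not contained in $G$, so this cannot happen. We are therefore in the second branch of the proof of Lemma \ref{lem:framework}(1). By \cite[Corollary 1.4 and Theorem 1.3(1)]{DS2020}, the group $L=\A_n$ acts in its natural action of degree $n\geqslant 10$. In particular $L$ is $(n-2)$-transitive, so it is $2$-transitive on $V(\Ga)$.

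Since $X\geqslant L$ is $2$-transitive on $V(\Ga)$, the orbital graphs of $X$ are trivial. The graph $\Ga$ is connected with nonempty edge set and is $X$-invariant, so its edge set must be all pairs of vertices, and hence $\Ga\cong\K_n$. This is exactly case (1)(a) of Lemma \ref{lem:framework}, and it gives the dichotomy: either $G\unlhd X$, so that $\Ga$ is $X$-normal, or $\Ga$ is complete.

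The only substantive input is the Daldegan--Schneider classification that identifies $L$ as $\A_n$ in its natural action. The step I expect to need the most care is checking that the hypothesis ``$G$ contains no minimal normal subgroup of $\soc(X)$'' is satisfied here. In the almost simple case this reduces to $G\neq L$, and that holds because $G\neq L$ is equivalent to $G\ntrianglelefteq X$ (if $G=L$ then $G\unlhd X$). As a consistency check, the count $n=|V(\Ga)|=|T|^k$ agrees with $G\cong T^k$ acting regularly inside $\A_n$. This check is not needed for the conclusion.
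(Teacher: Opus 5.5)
Your proposal is correct and follows the paper's route. The paper simply invokes Lemma \ref{lem:framework}(1)(a) when $G\ntrianglelefteq X$, and you have unpacked the relevant branch of that lemma's proof: Proposition \ref{subgroup} gives $G<L$; $G$ cannot contain the unique minimal normal subgroup $L$; \cite{DS2020} then forces $\A_n$ in its natural action; and $2$-transitivity yields $\Ga\cong\K_n$. Only the implication $G\ntrianglelefteq X\Rightarrow G\neq L$ is needed, and that is the direction you actually prove.
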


\pf\ If $G\unlhd X$, then $\Ga$ is $X$-normal. Otherwise, $G\ntrianglelefteq X$, and Lemma \ref{lem:framework} (1)(a) yields that $\Ga$ is a complete graph. \qed

\vspace{0.5em}

By \cite[Theorem 2]{P1993}, if $\Ga$ is connected $(X,2)$-arc-transitive, and $X$ is quasiprimitive on $V(\Ga)$, then $X$ is of affine, almost simple, product action or twisted wreath type. This gives the following lemma.

\begin{lemma}\label{lem:diagonal1}
The group $X$ is not of simple diagonal type.
\end{lemma}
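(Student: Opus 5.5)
The plan is to read the lemma off directly from Praeger's theorem on quasiprimitive $2$-arc-transitive graphs. The paper recalls this result just before the lemma as \cite[Theorem 2]{P1993}. Under Hypothesis \ref{hypothesis}, $\Ga=\Cay(G,S)$ is connected because $S$ generates $G$, it is $(X,2)$-arc-transitive, and in this section $X$ is assumed quasiprimitive on $V(\Ga)$. These are exactly the hypotheses of \cite[Theorem 2]{P1993}. That theorem says $X$ is of affine, almost simple, product action or twisted wreath type. Of the five O'Nan--Scott types for quasiprimitive groups (\cite[Theorem 1]{P1993}), only the simple diagonal type is missing from this list. Hence $X$ is not of simple diagonal type.

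The one point to check is that Praeger's theorem really applies. It is stated for connected non-complete-bipartite graphs of valency at least $3$, or for valency at least $2$ with at least one edge, depending on the formulation. Here $|V(\Ga)|=|T|^k\geqslant 60^2$. If the valency were $2$, then $\Ga$ would be a cycle, and its automorphism group is dihedral. A dihedral group is solvable, so it cannot contain $G\cong T^k$. So the valency is at least $3$.

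The exception for complete bipartite graphs also needs attention. If $\Ga\cong \K_{m,m}$, then $X$ preserves the bipartition. The index-$2$ subgroup of $X$ fixing each part is then a normal subgroup that is not transitive, so it must be trivial by quasiprimitivity. But then $|X|\leqslant 2$, which is impossible since $G\leqslant X$. Alternatively, $G$ would have an index-$2$ subgroup fixing each part, which contradicts $G'=G$, as already noted in the proof of Lemma \ref{lem:framework}. Either way, the hypotheses of \cite[Theorem 2]{P1993} hold.

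As a consistency check, one can compare with Lemma \ref{lem:framework}(1)(b). In the simple diagonal case that lemma forces $\soc(X)\cong T^{k+1}$ with $\soc(X)_\a\cong T$. A vertex stabiliser in a $2$-arc-transitive group acts $2$-transitively on the neighbourhood $\Ga(\a)$. Praeger's argument shows this is incompatible with the diagonal structure: the diagonal action forces the stabiliser of an edge to swap two coordinates in a way that contradicts $2$-transitivity on $\Ga(\a)$. This is only a check, and citing the theorem is enough. The main obstacle, such as it is, is confirming that the non-degeneracy conditions above hold. Once they do, the lemma is immediate.
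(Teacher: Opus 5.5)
Your proposal is correct and follows the paper's proof exactly: the lemma is read off from \cite[Theorem 2]{P1993}, applied to the connected $(X,2)$-arc-transitive graph $\Ga$ with $X$ quasiprimitive. Your checks that the valency is at least $3$ and that $\Ga$ is not complete bipartite are correct, if more than the paper bothers with. The final ``consistency check'' paragraph is only a loose heuristic rather than an accurate account of Praeger's argument, and can be omitted without loss.
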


\vspace{0.5em}

Before proving that $X$ is of product action type, we establish two lemmas that will be needed in this setting. For a nontrivial finite group $Y$, denote $\mu(Y)=\min\{|Y:M| : M<Y\}$. 

\begin{lemma}\label{lem:equal-prime-divisors}
Let $Q$ and $T$ be finite nonabelian simple groups with $T<Q$ and $\Pi(|T|)=\Pi(|Q|)$. Suppose that there are an integer $r\geqslant 2$ and a nontrivial subgroup $R\leqslant Q^r$ such that $R\cong R_0$ for some proper subgroup $R_0<Q$, and $|Q|^r=|T|^r|R|/|T^r\cap R|$. Then $(Q,T,r)$ is listed in Table \ref{tab:equal-prime-r}. Moreover, if $T^r\cap R=1$, then row 5 does not occur.
\end{lemma}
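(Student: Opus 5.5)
The plan is to turn the order equation into a divisibility statement and then run a case check against the known classification of simple groups with the same prime divisors. Put $A=T^r\cap R$. The hypothesis gives
\[
|R:A|=|Q:T|^r .
\]
So $|Q:T|^r$ divides $|R|=|R_0|$. Since $R_0$ is a proper subgroup of $Q$, we also get $|R_0|\leqslant |Q|/\mu(Q)$. With $r\geqslant 2$ this yields the two constraints I will use throughout:
\[
|Q:T|^2\leqslant |Q:T|^r\leqslant |Q|/\mu(Q),
\qquad
r\,v_p(|Q:T|)\leqslant v_p(|R_0|)\leqslant v_p(|Q|)\ \text{for every prime } p .
\]
The first says $T$ is a "large" subgroup of $Q$, roughly $|T|^2\geqslant \mu(Q)|Q|$. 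The second says that, prime by prime, $|R_0|$ must absorb the $r$-th power of the index.

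Next I would invoke the classification of pairs $T<Q$ of nonabelian simple groups with $\Pi(|T|)=\Pi(|Q|)$. This is the Liebeck--Praeger--Saxl list of maximal subgroups $M<Q$ with $\Pi(|M|)=\Pi(|Q|)$; here $T$ lies in such a maximal subgroup $M$ and $T$ is a composition factor of $M$. This gives a finite set of sporadic and small exceptions, such as $(\A_6,\M_{11})$-type inclusions and $(\M_{11},\M_{12})$, together with a few infinite families:
\begin{itemize}
\item $\A_{n-1}<\A_n$ where every prime divisor of $n$ is less than $n$;
\item $\Sp_{2m}(q)$ or $\Omega_{2m+1}(q)$ containing $\Omega^-_{2m}(q)$;
\item $\POmega_8^+(q)>\Omega_7(q)$;
\item $\Sp_4(q)>\PSL_2(q^2)$;
\item $\G_2(3^a)$ and similar families, and a handful of further classical-group families.
\end{itemize}

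For each pair I would first test the size bound $|Q:T|^2\leqslant |Q|/\mu(Q)$, using the known values of $\mu(Q)$. This eliminates most sporadic pairs outright. It also bounds $r$: we need $r\leqslant \log(|Q|/\mu(Q))/\log|Q:T|$.

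For the survivors I would then ask whether $Q$ has a proper subgroup $R_0$ with $|Q:T|^r$ dividing $|R_0|$. Concretely:
\begin{itemize}
\item For $\A_{n-1}<\A_n$ we need $n^r\mid |R_0|$ with $R_0<\A_n$. Taking $R_0$ inside a wreath-type or affine subgroup, this forces conditions such as $n=p^a$ with small $r$.
\item For the classical families I would use primitive prime divisors (Zsigmondy, Proposition \ref{prop:ppd}) of the relevant $q^i-1$ dividing $|Q:T|$. A proper subgroup of $Q$ whose order is divisible by the square of such a ppd must be very restricted, for example a torus normaliser or a subfield subgroup. A Sylow/ppd argument then pins down $r$, or rules the case out.
\end{itemize}
The survivors, with their admissible values of $r$, form Table \ref{tab:equal-prime-r}.

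For the final assertion, if $T^r\cap R=1$ then $|R_0|=|Q:T|^r$ exactly. For the row 5 parameters I would show that $Q$ has no subgroup of exactly this order, either by comparing it with the orders of the maximal subgroups of $Q$ or by a Hall-subgroup argument.

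I expect the main obstacle to be the infinite classical families. There the index $|Q:T|$ is a polynomial in $q$, for example $q^m(q^m-1)$ for $\Omega^-_{2m}(q)<\Sp_{2m}(q)$. Deciding exactly which proper subgroups have order divisible by its $r$-th power requires careful ppd bookkeeping. The first thing I would try is to take a ppd $z$ of $q^{2m}-1$ or of $q^m-1$ dividing the index. Then $z^r\mid |R_0|$ with $r\geqslant2$ forces $R_0$ into a $\ZZ_{(q^m+1)}$-torus normaliser or an extension-field subgroup. Those subgroups have too small a $p$-part to absorb $q^{mr}$, which should give a contradiction except in small cases.
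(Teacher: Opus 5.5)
\textbf{There is a genuine gap.} Your opening reduction is exactly how the paper begins: $d=|Q:T|$, $d^r\mid |R_0|$, hence $d^r\mu(Q)\leqslant |Q|$ and $rv_p(d)\leqslant v_p(|Q|)$, followed by the Liebeck--Praeger--Saxl classification. The problem is your endgame for the infinite families. You expect $d^r\mid |R_0|$ to force $n=p^a$ in the alternating case, and a contradiction ``except in small cases'' for the classical families. But these families are exactly the ones that must survive as rows 1--4 of Table~\ref{tab:equal-prime-r}, and the elimination arguments you sketch are false.

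\emph{Classical case.} Take $(\Sp_{2m}(q),\Omega^-_{2m}(q))$ with $m\geqslant 4$ and $q$ even. Then $d=q^m(q^m-1)$. The primitive prime divisor dividing $d$ is one of $q^m-1$, not of $q^{2m}-1$. Its square does not push $R_0$ into a torus normaliser or an extension-field subgroup. Indeed, $\Sp_m(q)\times\Sp_m(q)<\Sp_{2m}(q)$ has order divisible by $(q^m-1)^2$ and by $q^{m^2/2}\geqslant q^{2m}$, hence by $d^2$. So no comparison of $p$-parts or ppd-parts can exclude this family. What a primitive prime divisor $z$ of $q^m-1$ actually gives is $v_z(|Q|)=2v_z(d)$, hence $r=2$; the exception $(q,m)=(2,6)$ is handled with the prime $7$. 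The paper stops there and keeps rows 2--4.

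\emph{Alternating case.} Here the hypotheses are genuinely satisfiable with $n$ not a prime power. For $(Q,T,r)=(\A_{12},\A_{11},2)$, take $R=H\times K$ with $H,K$ regular copies of $\A_4$ in $\A_{12}$. Then $T^2\cap R=1$, $|R|=144=|Q:T|^2$, and $R\cong \A_4\times\A_4<\A_{12}$. So for $(\A_c,\A_h)$ all you can do is record the numerical constraints, which is row 1.

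\emph{Corrections to your list of pairs.} These are needed before the case check can run.
\begin{itemize}
\item The alternating family is $\A_h<\A_c$ with $(h,c]$ prime-free (e.g.\ $\A_7<\A_9$), not only $h=c-1$.
\item $(\A_6,\M_{11})$ is not an equal-prime pair, since $11\nmid |\A_6|$.
\item The orthogonal family is $\POmega^+_{2m}(q)>\Omega_{2m-1}(q)$ for every even $m\geqslant 4$, not only $m=4$.
\item Besides the alternating family, the only infinite families are the four classical ones, including $(\PSp_4(q),\PSL_2(q^2))$. The bound $d^2\mu(Q)\leqslant|Q|$ kills that last family, since $|Q|/d^2=(q^2+1)/(2,q-1)<\mu(Q)$.
\item Everything else is the finite Table 10.7, handled with the Atlas. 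Beyond your two constraints, the paper also uses that $|Q:M|$ divides $|Q|/d^r$ for a maximal $M\geqslant R_0$. This removes $(\PSL_6(2),\PSL_5(2))$, $(\POmega^+_8(2),\A_9)$, and $r=3$ for $(\M_{12},\M_{11})$.
\end{itemize}

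\emph{Row 5.} Your plan works, but needs one more layer than you state. The only maximal subgroup of $\Sp_6(2)$ of order divisible by $5184$ is $\PSU_4(2){:}2$. A subgroup of order $5184$ there would have index $10$, which is impossible because $\mu(\PSU_4(2))=27$.
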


\begin{table}[htbp]
\centering
\renewcommand{\arraystretch}{1.25}
\setlength{\tabcolsep}{4.2pt}
\caption{Possible triples $(Q,T,r)$ in Lemma \ref{lem:equal-prime-divisors}.}
\label{tab:equal-prime-r}
{\scriptsize
\begin{tabular}{|c|c|c|c|p{5.4cm}|}
\hline
Row & $Q$   & $T$ & $r$ & Conditions\\
\hline
1   & $\A_c$               & $\A_h$             & $r\geqslant 2$ & $5\leqslant h<c$, $(h,c]$ contains no prime, $d=c!/h!$, $rv_p(d)\leqslant v_p(|Q|)$ for $p\mid d$, and $d^r c\leqslant c!/2$\\
\hline
2   & $\PSp_{2m}(q)$      & $\POmega^-_{2m}(q)$ & 2 & $m\geqslant 4$ is even and $q$ is even\\
\hline
3   & $\POmega_{2m+1}(q)$ & $\POmega^-_{2m}(q)$ & 2 & $m\geqslant 4$ is even and $q$ is odd\\
\hline
4   & $\POmega^+_{2m}(q)$ & $\POmega_{2m-1}(q)$ & 2 & $m\geqslant 4$ is even\\
\hline
5   & $\Sp_6(2)$          & $\A_8$              & 2 & impossible if $T^2\cap R=1$\\
\hline
6   & $\M_{11}$           & $\PSL_2(11)$        & 2 & \\
\hline
7 & $\M_{12}$         & $\M_{11}$ & 2 & \\
\hline
8 & $\M_{24}$         & $\M_{23}$ & 2, 3 & \\
\hline
\end{tabular}}
\end{table}

\pf\, Let $d=|Q:T|$. The order identity $d^r=|R:T^r\cap R|$ implies that $d^r\mid |R_0|$. Let $M$ be a maximal subgroup of $Q$ containing $R_0$. Since $d^r\mid |R_0|$ and $R_0\leqslant M$, we have $d^r\mid |M|$. Hence $\mu(Q)\leqslant |Q:M|\leqslant |Q|/d^r$, giving $d^r\mu(Q)\leqslant |Q|$. 
Moreover, for each prime $p\mid d$, the divisibility $d^r\mid |R_0|$ yields $rv_p(d)\leqslant v_p(|R_0|)\leqslant v_p(|Q|)$. It follows that $r\leqslant\min_{p\mid d}\lfloor v_p(|Q|)/v_p(d)\rfloor$.

Since $T<Q$ and $\Pi(|T|)=\Pi(|Q|)$, \cite[Corollary 5 and Table 10.7]{LPS2000} gives either $(Q,T)=(\A_c,\A_h)$ with $5\leqslant h<c$ and no prime $p$ satisfying $h<p\leqslant c$, or $(Q,T)$ is one of $(\PSp_{2m}(q),\POmega^-_{2m}(q))$ with $m$ and $q$ even, $(\POmega_{2m+1}(q),\POmega^-_{2m}(q))$ with $m$ even and $q$ odd, $(\POmega^+_{2m}(q),\POmega_{2m-1}(q))$ with $m\geqslant 4$ even, $(\PSp_4(q),\PSL_2(q^2))$ with $q>2$, or one of the specific pairs listed in \cite[Table 10.7]{LPS2000}. The case $Q=\PSL_2(8)$ is excluded because every proper subgroup of $Q$ is solvable; see \cite[Proposition 4.1.17]{KL1990}.

For $(Q,T)=(\A_c,\A_h)$, the equalities $d=c!/h!$ and $\mu(\A_c)=c$ give row 1 of Table \ref{tab:equal-prime-r}. For the three infinite classical families with $m\geqslant 4$, Proposition \ref{prop:ppd} supplies a primitive prime divisor $z$ of $q^m-1$, except when $(q,m)=(2,6)$. In the former cases, $v_z(|Q|)=2v_z(d)$, so $rv_z(d)\leqslant 2v_z(d)$ and so $r=2$. For $(q,m)=(2,6)$, the equalities $v_7(|Q|)=2$ and $v_7(d)=1$ again give $r=2$. 
It remains to consider $(Q,T)=(\PSp_4(q),\PSL_2(q^2))$. Writing $\delta=(2,q-1)$, we have $d=q^2(q^2-1)$ and $|Q|/d^2=(q^2+1)/\delta$. By \cite[Theorem 5.2.2 and Table 5.2.A]{KL1990}, $\mu(\PSp_4(q))=(q^4-1)/(q-1)$ for $q>3$, and $\mu(\PSp_4(3))=27$. Thus $\mu(Q)>|Q|/d^2$, contradicting $d^r\mu(Q)\leqslant |Q|$ for $r\geqslant 2$.

For the remaining pairs in \cite[Table 10.7]{LPS2000}, the bounds $rv_p(d)\leqslant v_p(|Q|)$ and $d^r\mu(Q)\leqslant |Q|$, together with \cite{Atlas}, leave $(\PSL_6(2),\PSL_5(2),2)$, $(\Sp_6(2),\A_8,2)$, $(\POmega^+_8(2),\A_9,2)$ and the sporadic cases in rows 6--8 of Table \ref{tab:equal-prime-r}, with $r=3$ also possible for $(\M_{12},\M_{11})$. 
For $(Q,T)=(\PSL_6(2),\PSL_5(2))$ and $(\POmega^+_8(2),\A_9)$, the values of $|Q|/d^2$ are 4960 and 189, respectively. The divisibility $d^2\mid |R_0|$ requires $|Q:M|\mid |Q|/d^2$ for every maximal subgroup $M$ containing $R_0$, but the maximal-subgroup indices in \cite{Atlas} exclude both cases. Similarly, $(Q,T,r)=(\M_{12},\M_{11},3)$ is excluded because $|Q|/d^3=55$ and no maximal subgroup of $\M_{12}$ has index dividing 55. Hence rows 5--8 of Table \ref{tab:equal-prime-r} remain.

Suppose that $T^r\cap R=1$ and row 5 holds. Then $(Q,T,r)=(\Sp_6(2),\A_8,2)$, $d=72$ and $|R|=d^2=5184$. Since $d^2\mid |R_0|\leqslant |R|$, we have $|R_0|=5184$ and $|Q:R_0|=280$. Let $R_0\leqslant M<Q$ with $M$ maximal. By \cite[Page 46]{Atlas}, $|Q:M|=28$ and $M\cong \PSU_4(2):\ZZ_2$, so $|M:R_0|=10$. Since $|\soc(M)|=25920> |R_0|$, the simple group $\soc(M)\cong \PSU_4(2)$ acts nontrivially on the ten cosets of $R_0$ in $M$. This contradicts $\mu(\PSU_4(2))=27$. Thus row 5 is excluded when $T^r\cap R=1$. \qed

\begin{lemma}\label{lem:strip-pair-exclusion}
Let $(Q,T)$ be a pair in Table \ref{tab:groups} with $Q\not\cong T$. Then the following hold.
\begin{enumerate}[label=(\roman*),leftmargin=2em]
\item $\Pi(|Q|)\neq \Pi(|T|)$.
\item There is no solvable proper subgroup $R<Q$ such that $Q=TR$.
\end{enumerate}
\end{lemma}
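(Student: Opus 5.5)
The plan is to treat the five rows of Table \ref{tab:groups} with $Q\not\cong T$ one at a time. In each row I will exhibit a prime dividing $|Q|$ but not $|T|$, which gives (i). For (ii) I will appeal to the classification of factorizations of almost simple groups with a solvable factor, due to Li and Xia (Memoirs AMS, \emph{Factorizations of almost simple groups with a solvable factor}). I am assuming that this classification is usable here as an external result, since it is not among the results stated earlier in the paper. In the table $T$ is the first entry and $Q$ the second, so the rows in question are:
\begin{itemize}
\item $(T,Q)=(\M_{12},\A_{12})$;
\item $(\POmega_8^+(q),\A_n)$ with $n=|\POmega_8^+(q):\Omega_7(q)|$;
\item $(\POmega_8^+(2),\Sp_8(2))$;
\item $(\Sp_4(2^a),\Sp_{4b}(2^{a/b}))$ with $b>1$, since $b=1$ gives $Q\cong T$;
\item $(\Sp_4(2^a),\A_n)$ with $n=|\Sp_4(2^a):\Sp_2(2^{2a}).\ZZ_2|$.
\end{itemize}

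For (i), the sporadic rows are direct order computations using \cite{Atlas}.
\begin{itemize}
\item $7\mid|\A_{12}|$ but $7\nmid|\M_{12}|$.
\item $17\mid 2^8-1$, so $17\mid|\Sp_8(2)|$, whereas $|\POmega_8^+(2)|=2^{12}3^55^27$.
\end{itemize}

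For the two alternating rows I will use Bertrand's postulate: there is a prime $p$ with $n/2<p\leqslant n$, so $p\mid|\A_n|$. Every prime divisor of $|T|$ is either $2$, the defining characteristic, or divides some $q^i\pm1$ with small $i$, and so is at most $q^4+1$.
\begin{itemize}
\item For $T=\Sp_4(q)$ with $q=2^a\geqslant4$, we have $n=q^2(q^2-1)/2$, and already at $q=4$ this gives $n/2=60>17=q^2+1$, with the gap only growing in $q$.
\item For $T=\POmega_8^+(q)$, the index $n$ is of order $q^3(q^4-1)/(2,q-1)$, which is far larger than $2(q^4+1)$.
\end{itemize}
In both cases $p>q^4+1$, so $p\nmid|T|$.

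For the row $(\Sp_4(2^a),\Sp_{4b}(2^{a/b}))$ I will write $q_0=2^{a/b}$ and use
$$|Q|=q_0^{4b^2}\prod_{i=1}^{2b}(q_0^{2i}-1),$$
taking the factor with $i=2b-1$. This is $2^e-1$ where $e=(a/b)(4b-2)$.
\begin{itemize}
\item The exponent satisfies $2a<e<4a$, and $e\nmid 4a$ because $4a/e=2b/(2b-1)$ is not an integer for $b\geqslant2$.
\item Every prime divisor of $|\Sp_4(2^a)|$ other than $2$ divides some $2^j-1$ with $j\mid 4a$.
\item By Proposition \ref{prop:ppd}, a primitive prime divisor of $2^e-1$ therefore does not divide $|T|$.
\item The only Zsigmondy exception is $e=6$, which forces $a=b=2$, that is $(T,Q)=(\Sp_4(4),\Sp_8(2))$. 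There $7\mid|Q|$ while $|T|=2^8 3^2 5^2 17$.
\end{itemize}

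For (ii), suppose $Q=TR$ with $R<Q$ solvable. Then $|Q:T|$ divides $|R|$, and in particular the prime found in (i) divides $|R|$. The main step, and the expected obstacle, is ruling out such a factorization for each pair. My first attempt is to take a maximal subgroup $B\geqslant R$ and read off the Li--Xia list of factorizations $Q=HK$ with $K$ solvable. For $Q$ alternating or symplectic, that list essentially consists of factorizations such as $\A_n=\A_{n-1}R$ with $R$ a transitive solvable group of degree $n$, together with a few small exceptional cases. I will then check that $T$ is never contained in the non-solvable factor $H$ appearing there. If invoking the classification proves awkward, the fallback is elementary order arguments. For the alternating rows, $R$ would contain a $p$-cycle with $p>n/2$, and such a solvable group is very restricted. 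For the remaining rows I will compare $|R|$ with the orders of the solvable maximal-type subgroups listed in \cite{Atlas} and \cite{KL1990}.
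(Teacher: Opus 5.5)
Your part (i) is correct. Two features make it tidier than a case split. The sharper bound (every prime divisor of $|\POmega_8^+(q)|$ is at most $q^2+q+1$) lets one Bertrand prime in $(n/2,n]$ handle every $q$. The single primitive prime divisor of $2^e-1$, with $e=(a/b)(4b-2)$, handles the whole $\Sp_{4b}$ row. There is one slip. The closing claim ``in both cases $p>q^4+1$'' is false for the $\Sp_4(q)$ row: for $q=4$ you get $p\leqslant 120<257$. What you need there, and what your bullet actually gives, is $p>n/2>q^2+1$. This suffices because every prime divisor of $|\Sp_4(q)|$ is at most $q^2+1$.

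The genuine gap is (ii), which you leave as a plan. Since $R$ is solvable, $T$ itself is the nonsolvable factor. What must be extracted from Li--Xia is therefore a lower bound on $T$.
\begin{itemize}
\item For $Q=\A_n$ with $n\geqslant 12$ (true in all three alternating rows, so the degree-$6$ and degree-$8$ exceptions do not arise), the alternating part of their classification gives $\A_{n-j}\leqslant T$ for some $j\in\{1,2,3\}$. Then $\A_{n-3}\leqslant T$ is impossible, e.g.\ because $|\A_{n-3}|>|T|$.
\item For $Q=\Sp_{2m}(q)$ with $q$ even and $2m\geqslant 8$, their Theorem 1.1(d) leaves only $T\cong\Omega^-_{2m}(q)$, coming from the family $\Sp_{2m}(q)=\Omega^-_{2m}(q)R$. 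This is ruled out by the prime $17$ when $T=\POmega_8^+(2)$, and by comparing $2$-parts ($2b(2b-1)f$ versus $4bf$) when $T=\Sp_4(q_0^b)$.
\end{itemize}

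Your summary of the list as ``essentially $\A_n=\A_{n-1}R$ plus small exceptions'' omits exactly this symplectic family. The check you propose also runs the wrong way. You plan to show that $T$ is never contained in a nonsolvable factor $H$ of the list, but that is not what follows from $Q=TR$, and it can fail. For $q\geqslant 4$, $\POmega_8^+(q)$ acts faithfully on its singular points, of which there are fewer than $n$, so a copy of it lies in $\A_{n-1}$. Your fallbacks do not close the gap either. The $p$-cycle idea needs an actual order count; for $\M_{12}<\A_{12}$ you must show that no solvable subgroup of $\A_{12}$ has order divisible by $2520$. In the symplectic rows a solvable $R$ need not lie in a solvable maximal subgroup, so comparing $|R|$ with the solvable maximal subgroups in the Atlas or in Kleidman--Liebeck proves nothing.
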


\pf\, We first prove (i). The pair $(Q,T)=(\A_{12},\M_{12})$ cannot occur because $7\mid |Q|$ but $7\nmid |T|$. For $(Q,T)=(\Sp_8(2),\POmega_8^+(2))$, the prime 17 divides $|Q|$ but not $|T|$, which rules out this case as well.
Let $Q=\A_n$ and $T=\POmega_8^+(q)$, where $n=|\POmega_8^+(q):\Omega_7(q)|=\frac{(2,q-1)}{(4,q^4-1)}q^3(q^4-1)$. Every prime divisor of $|T|$ is at most $q^6-1$. For $q=2$ or 3, the prime $11$ divides $|Q|$ but not $|T|$. Assume that $q\geqslant 4$ is even or $q\geqslant 5$ is odd. If $q$ is even, then $n=q^3(q^4-1)$ and $n-2(q^6-1)=q^6(q-2)-q^3+2>0$. If $q$ is odd, then $n=q^3(q^4-1)/2$ and $n-2(q^6-1)=(q^6(q-4)-q^3+4)/2>0$. Thus $2(q^6-1)<n$. By \cite[Theorem 418]{HW1960}, there is a prime $z$ such that $q^6-1<z<2(q^6-1)<n$. Since $z>2$ and $|Q|=n!/2$, we have $z\mid |Q|$, whereas $z>q^6-1$ gives $z\nmid |T|$.

Consider $Q=\A_n$ and $T=\Sp_4(q)$, where $q=2^a\geqslant 4$ and $n=q^2(q^2-1)/2$. The order $|T|=q^4(q^2-1)^2(q^2+1)$ shows that every prime divisor of $|T|$ is at most $q^2+1$. A direct calculation gives $n-2(q^2+1)=(q^4-5q^2-4)/2>0$, so $2(q^2+1)<n$. By \cite[Theorem 418]{HW1960}, there is a prime $z$ satisfying $q^2+1<z<2(q^2+1)<n$. Now $z>2$ and $|Q|=n!/2$, which implies $z\mid |Q|$, whereas the bound $z>q^2+1$ gives $z\nmid |T|$.

It remains to consider $Q=\Sp_{4b}(q_0)$ and $T=\Sp_4(q_0^b)$, where $q_0=2^f$ and $b\geqslant2$. Their orders are $|Q|=q_0^{4b^2}\prod_{i=1}^{2b}(q_0^{2i}-1)$ and $|T|=q_0^{4b}(q_0^{2b}-1)(q_0^{4b}-1)$. For $b=2$ and $q_0=2$, the prime 7 divides $|Q|$ but not $|T|$. Consider next $b=2$ and $q_0>2$. By Proposition \ref{prop:ppd}, $q_0^6-1$ has a primitive prime divisor $z$. 
The factor $q_0^6-1$ occurs in $|Q|$, so $z\mid |Q|$. 
The primitiveness of $z$ gives $z\nmid q_0^2-1$. Since $(q_0^6-1,q_0^4-1)=(q_0^6-1,q_0^8-1)=q_0^2-1$, we obtain $z\nmid q_0^4-1$ and $z\nmid q_0^8-1$. Hence $z\nmid |T|$.
Assume now that $b\geqslant 3$. By \cite[Theorem 418]{HW1960}, there is a prime $w$ satisfying $b<w<2b$. Since $(q_0,2w)\neq (2,6)$, Proposition \ref{prop:ppd} yields a primitive prime divisor $z$ of $q_0^{2w}-1$. The inequality $w<2b$ implies that $q_0^{2w}-1$ is a factor of $|Q|$, so $z\mid |Q|$. Since $2b<2w$, the primitiveness of $z$ gives $z\nmid q_0^{2b}-1$. Furthermore, $w$ is an odd prime greater than $b$, and so $w\nmid 2b$. Thus $(2w,4b)<2w$, and the identity $(q_0^{2w}-1,q_0^{4b}-1)=q_0^{(2w,4b)}-1$ implies that $z\nmid q_0^{4b}-1$. Then $z\nmid |T|$. This proves (i).

We now prove (ii). Suppose that $Q=TR$ with $R<Q$ solvable. First consider $Q=\A_n$. For all alternating cases in Table \ref{tab:groups}, we have $n\geqslant 12$, so the exceptional degrees $n=6, 8$ in \cite[Proposition 4.3]{LX2022} do not occur. By \cite[Proposition 4.3]{LX2022}, we obtain $\A_{n-j}\leqslant T$ for some $j\in\{1,2,3\}$, which implies $\A_{n-3}\leqslant T$. For $(Q,T)=(\A_{12},\M_{12})$, this would give $\A_9\leqslant \M_{12}$. However, $7\mid |\A_9|$ whereas $7\nmid |\M_{12}|$, a contradiction.

Now let $T=\POmega_8^+(q)$. When $q=2$ or 3, the prime $11$ divides $|\A_{n-3}|$ but not $|T|$, contradicting $\A_{n-3}\leqslant T$. Now assume that $q\geqslant 4$ is even or $q\geqslant 5$ is odd. Every prime divisor of $|T|$ is at most $q^6-1$. For even $q$, we have $n-3-2(q^6-1)=q^6(q-2)-q^3-1>0$, whereas for odd $q$, $n-3-2(q^6-1)=(q^6(q-4)-q^3-2)/2>0$. Hence $2(q^6-1)<n-3$. By \cite[Theorem 418]{HW1960}, there is a prime $z$ satisfying $q^6-1<z<2(q^6-1)<n-3$. Since $z>2$, it follows that $z\mid |\A_{n-3}|$, while the bound on the prime divisors of $|T|$ gives $z\nmid |T|$. This again contradicts $\A_{n-3}\leqslant T$.

Next consider $T=\Sp_4(q)$, where $q=2^a\geqslant 4$. The equality $|T|=q^4(q^2-1)^2(q^2+1)$ shows that every prime divisor of $|T|$ is at most $q^2+1$. A direct calculation gives $n-3-2(q^2+1)=(q^4-5q^2-10)/2>0$, so $2(q^2+1)<n-3$. By \cite[Theorem 418]{HW1960}, there is a prime $z$ such that $q^2+1<z<2(q^2+1)<n-3$. The inequality $z<n-3$ and the fact that $z>2$ imply $z\mid |\A_{n-3}|$, whereas $z>q^2+1$ gives $z\nmid |T|$. This is incompatible with $\A_{n-3}\leqslant T$.

It remains to consider the two symplectic pairs. For $(Q,T)=(\Sp_8(2),\POmega_8^+(2))$, \cite[Theorem 1.1 (d)]{LX2022} applied to $Q=RT$ forces $T\cong \Omega^-_8(2)$. This is impossible because $17\mid |\Omega^-_8(2)|$ but $17\nmid |\POmega_8^+(2)|$.
Consider now $Q=\Sp_{4b}(q_0)$ and $T=\Sp_4(q_0^b)$, where $q_0=2^f$ and $b\geqslant 2$. If $Q=RT$ with $R$ solvable, then the same symplectic family in \cite[Theorem 1.1 (d)]{LX2022} requires $T\cong \Omega^-_{4b}(q_0)$. 
However, $v_2(|\Omega^-_{4b}(q_0)|)=2b(2b-1)f\neq 4bf=v_2(|T|)$ for $b\geqslant 2$, a contradiction. Thus (ii) holds, completing the proof of the lemma. \qed

\begin{lemma}\label{lem:PA1}
Suppose that $X$ is of product action type and that $\Ga$ is neither $X$-normal nor complete. Then $\soc(X)\cong Q^k$ and $(Q,T,k)$ is listed in Table \ref{tab:exact-pa}.
\end{lemma}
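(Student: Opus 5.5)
Since $X$ is quasiprimitive of product action type and $G\ntrianglelefteq X$, Lemma \ref{lem:framework}(1)(c) leaves four subcases, (i)--(iv), and the plan is to dispose of each using 2-arc-transitivity. The tool is the factorization $\soc(X)=G\,\soc(X)_\a$ coming from regularity of $G$, combined with the structure of 2-arc-transitive product action groups from \cite{P1993}. There, $X\leqslant H\wr \Sy_r$ with $H$ almost simple and $\soc(X)=Q^r$, and $\soc(X)_\a$ is either $R^r$ for a proper subgroup $R<Q$ that is the stabilizer in a 2-transitive or related action of $H$, or, in the diagonal-type subcase, $\soc(X)_\a\cong Q_\beta$ embedded diagonally. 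The more precise fact I would use is that the local action $X_\a^{\Ga(\a)}$ is 2-transitive. This forces $\soc(X)_\a$ to be isomorphic to a subgroup $R_0$ of $Q$, since $\soc(X)_\a$ is a subdirect-type subgroup projecting into a single point stabilizer. Taking orders in $\soc(X)=G\soc(X)_\a$ then gives
$$|Q|^r=|T|^k\,|\soc(X)_\a|/|G\cap \soc(X)_\a|,$$
and since $G$ is regular, $G\cap\soc(X)_\a=1$.

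For subcase (i), where $\soc(X)\cong T^r$ with $r>k$, I would use that $|T|^{r-k}=|\soc(X)_\a|$ and that $\soc(X)_\a$ embeds in a proper subgroup of $T$. This gives $|T|^{r-k}<|T|$, which is impossible as $r>k$. For subcase (iv), where $\soc(X)\cong\A_n^r$ and $\soc(X)_\a\cong \A_{n-1}^r$, the stabilizer is not isomorphic to a subgroup of $\A_n$ when $r\geqslant2$. Equivalently, the point stabilizer $X_\a$ would have $\A_{n-1}^r$ as a normal subgroup acting on the $\Ga(\a)$ orbits. I would exclude this by noting that in a 2-arc-transitive product action graph the stabilizer of the socle cannot be the full product $R^r$ with $R$ nonsolvable: a 2-transitive local action forces a unique nonabelian composition factor structure (via Praeger's result that $\soc(X)_\a$ acts faithfully and transitively on $\Ga(\a)$ only through one coordinate). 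This is the step I am least sure about. If it fails, I would instead show directly that $\Ga$ is a Hamming-type graph on $n^r$ points and that the Hamming graph $H(r,n)$ is not 2-arc-transitive for $n\geqslant3$, and neither is its complement.

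Subcase (iii), the strip case, will be eliminated using Lemma \ref{lem:strip-pair-exclusion}. If $Q\cong T$, then $G$ contains some $T_i$ that is a full factor $Q_i$, which is excluded because $G$ contains no minimal normal subgroup of $\soc(X)$. If $Q\not\cong T$, I would project the factorization $\soc(X)=G\soc(X)_\a$ onto a factor $Q_j$ meeting a strip. When $|\mathcal P(T_i)|=1$ for the factor concerned, this yields $Q=T R$ with $R$ the projection of $\soc(X)_\a$. By 2-arc-transitivity with $r\geqslant 2$, the local action analysis should make $R$ solvable, contradicting Lemma \ref{lem:strip-pair-exclusion}(ii). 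Otherwise, comparing the prime divisors in $|Q|^r=|T|^k|\soc(X)_\a|$, together with $\soc(X)_\a\lesssim Q$, should contradict Lemma \ref{lem:strip-pair-exclusion}(i).

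Subcase (ii) is the main case: $k=r$, $T_i<Q_i$, and $|Q|^k=|T|^k|R|$ with $R=\soc(X)_\a\cong R_0<Q$. If $\Pi(|T|)=\Pi(|Q|)$, then Lemma \ref{lem:equal-prime-divisors} applies with $T^r\cap R=1$, so row 5 is excluded and $(Q,T,k)$ lies in Table \ref{tab:equal-prime-r}. Presumably Table \ref{tab:exact-pa} consists of these rows, sharpened by using 2-transitivity of $H$ on $|Q:Q_\beta|$ points in the product action. If instead some prime $p$ divides $|Q|$ but not $|T|$, then $v_p(|R|)=k\,v_p(|Q|)\geqslant 2v_p(|Q|)$, contradicting $R\lesssim Q$. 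I expect the main obstacle to be establishing rigorously that $\soc(X)_\a$ embeds in $Q$ (respectively is solvable in the strip case); I would derive this from Praeger's description of 2-arc-transitive quasiprimitive product action groups \cite{P1993}, possibly with \cite{LWX2022}.
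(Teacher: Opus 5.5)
Your reduction through Lemma \ref{lem:framework}(1)(c) and the factorization $L=GL_\a$ with $G\cap L_\a=1$ (where $L=\soc(X)$) follows the paper's outline. However, there are two genuine gaps.

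\textbf{First gap: the embedding $L_\a\lesssim Q$.} Your whole argument rests on the claim that 2-transitivity of $X_\a^{\Ga(\a)}$ forces $L_\a\lesssim Q$, and this is not proved. The local action does not give it for free. The result the paper uses, \cite[Theorem 1.1]{Lu2026}, only yields a dichotomy:
\begin{enumerate}[label=(\roman*)]
\item every projection $\pi_i|_{L_\a}$ is injective; or
\item $L_\a$ is solvable and some $\pi_i|_{L_\a}$ is not injective.
\end{enumerate}
In case (ii), \cite[Theorem 4.2]{LS2023} makes every $Q_i$ semiregular, so $r\geqslant 3$, and then $L_\a\cong(\ZZ_p^t\times\ZZ_{m_1}).\ZZ_m$ with valency $p^t$. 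The paper's entire Step 1 is devoted to excluding case (ii) using the regular subgroup $G$:
\begin{itemize}
\item for $T_i<Q_i$, by comparing $c^r=|L_\a|=p^tm_1m$ with the primitive-prime-divisor constraints in Lu's theorem;
\item for strips, by Lemma \ref{lem:strip-pair-exclusion}(ii);
\item for $\A_n^r$, by nonsolvability of $\A_{n-1}^r$;
\item for $T\cong Q$, by projecting a diagonal strip onto two coordinates to get $Q_1=R_2^{\varphi^{-1}}R_1$ with isomorphic solvable proper factors, contrary to \cite[Lemma 5.2]{BPS2004}.
\end{itemize}

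You also have the strip case backwards. Solvability of $R$ is available only in case (ii). In case (i) the strip case falls to $\Pi(|Q|)=\Pi(|T|)$ and Lemma \ref{lem:strip-pair-exclusion}(i), whatever the support sizes. Moreover, when $Q\cong T$ in the strip case, $G$ need not contain any $Q_i$, since all strips may have support of size $2$. The correct argument there is $r\geqslant 2k>k$, so $|L_\a|=|Q|^{r-k}\geqslant|Q|$, contradicting the embedding.

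\textbf{Second gap: Table \ref{tab:equal-prime-r} is not Table \ref{tab:exact-pa}.} Even granting the embedding, your argument stops at Lemma \ref{lem:equal-prime-divisors}, that is, at Table \ref{tab:equal-prime-r}. That table is strictly larger than Table \ref{tab:exact-pa}. It still contains $(\PSp_{2m}(q),\POmega^-_{2m}(q),2)$, $(\POmega_{2m+1}(q),\POmega^-_{2m}(q),2)$, the whole family $(\POmega^+_{2m}(q),\POmega_{2m-1}(q),2)$, $(\M_{11},\PSL_2(11),2)$, $(\M_{12},\M_{11},2)$, and alternating pairs $(\A_c,\A_h)$ with $c-h\geqslant 4$. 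Invoking ``2-transitivity of $H$'' does not remove these.

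The missing idea is to extract an \emph{exact} factorization of $Q$ from $G\cap L_\a=1$. Let $d=|Q:T|$ and set
\begin{itemize}
\item $H_i=\{x\in L_\a\mid \pi_i(x)\in T_i\}$, which has index $d$ in $L_\a$;
\item $K_i=\bigcap_{j\neq i}H_j$ and $B_i=\pi_i(K_i)$.
\end{itemize}
Since $\bigcap_jH_j=G\cap L_\a=1$ and $\pi_i|_{L_\a}$ is injective, you get $Q_i=T_iB_i$ with $T_i\cap B_i=1$ and $|B_i|=d$. You also get $R_i=A_iB_i$, where $A_i=T_i\cap R_i$.

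Now feed $Q=TB$ with $T\cap B=1$ into the classifications of factorizations (for instance \cite{BL2021} for a solvable factor and \cite{LWX2023} for nonsolvable factors). This forces $h\geqslant c-3$ in the alternating case. It leaves only $(\POmega_8^+(2),\Sp_6(2))$ from the orthogonal family. It also eliminates the $\PSp/\POmega^-$, $\POmega_{2m+1}/\POmega^-$ and $(\M_{11},\PSL_2(11))$ rows.

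The pair $(\M_{12},\M_{11})$ needs a separate argument. Inside $R_i\leqslant\M_{12}$ of order $144$, the order-$12$ subgroups $A_i$ and $B_i$ would be conjugate. Then $A_i$ fixes a point of $[R_i:B_i]$, which contradicts $R_i=A_iB_i$.
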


\pf\, Suppose that $X$ is of product action type. Then, by \cite[Section 2]{P1993}, $X$ is a subgroup of $H\wr\Sy_r$, where $H$ is a quasiprimitive permutation group that is either of almost simple type or simple diagonal action type. 
Let $L=\soc(X)=Q_1\times\cdots\times Q_r\cong Q^r$, where each $Q_i\cong Q$ is nonabelian simple and $r\geqslant 2$ is an integer. By Proposition \ref{subgroup}, $G\leqslant L$.
Since $\Ga$ is connected $(X,2)$-arc-transitive, the simple diagonal case is excluded by \cite[Theorem 2]{P1993}. Hence $H$ is of almost simple type. 
The argument of \cite[Section 2]{P1993} shows that for $\a\in V(\Ga)$, $L_\a$ is a subdirect subgroup of $R_1\times\cdots\times R_r$ where $1\neq R_i<Q_i$ and $R_i\cong R_1$ for all $1\leqslant i\leqslant r$. 
Let $\pi_i:L\to Q_i$ be the natural projection for $1\leqslant i\leqslant r$. Then $R_i=\pi_i(L_\a)$. We proceed in four steps to complete the proof of the lemma.

\vspace{0.5em}
\emph{Step 1. Every restriction $\pi_i|_{L_\a}$ is injective.}
Since $G$ is regular on $V(\Ga)$ and $G\leqslant L$, we have $L=GL_\a$ and $G\cap L_\a=1$. Moreover, $\Ga$ is non-bipartite; otherwise, the action of the transitive group $G$ on the two parts of the unique bipartition would give a subgroup of index 2 in $G\cong T^k$, which is impossible. 
Let $\{\a,\b\}$ be an edge of $\Ga$. By \cite[Lemma 2.1]{Lu2026}, $X=\l X_\a, X_\b\r$.
Hence by \cite[Theorem 1.1]{Lu2026}, either every $\pi_i|_{L_\a}$ is injective, or $L_\a$ is solvable and $\pi_i|_{L_\a}$ is not injective for some $i$.

Assume that $\pi_i|_{L_\a}$ is not injective for some $i$. The quasiprimitivity of $X$ makes every nontrivial normal subgroup of $X$ transitive, so $\Ga$ is basic with respect to $X$ in the sense of \cite{LS2023}. The graph is connected $(X,2)$-arc-transitive with valency at least 3. Since $\Ga$ is non-bipartite, it is not complete bipartite, and $L\cong Q^r$ is not simple as $r\geqslant 2$. 
By \cite[Theorem 4.2]{LS2023}, every $Q_i$ is semiregular on $V(\Ga)$. If $r=2$, then $\ker(\pi_1|_{L_\a})=L_\a\cap Q_2=(Q_2)_\a=1$ and $\ker(\pi_2|_{L_\a})=L_\a\cap Q_1=(Q_1)_\a=1$, contrary to the assumption. Hence $r\geqslant 3$. By \cite[Theorem 1.1 (2)]{Lu2026}, $L_\a\cong(\ZZ_p^t\times\ZZ_{m_1}).\ZZ_m$ and $|\Ga(\a)|=p^t$, where $m_1\mid m$ and $m\mid p^e-1$ for some proper divisor $e$ of $t$.

Suppose that $T\not\cong Q$. If $Q_i\leqslant G$ for some $i$, then $Q_i\unlhd G$, and Proposition \ref{obs} gives $Q_i\cong T$, contrary to $T\not\cong Q\cong Q_i$. We next show that $G\ntrianglelefteq X$. Suppose to the contrary that $G\unlhd X$. Since $L$ is the unique minimal normal subgroup of the quasiprimitive group $X$ and $G\neq1$, we have $L\leqslant G$. Together with $G\leqslant L$, this gives $G=L$, forcing $T\cong Q$, a contradiction. Hence $G\ntrianglelefteq X$.
By Lemma \ref{lem:framework} (1)(c), one of the following holds.
\begin{enumerate}[leftmargin=1.2em,label=(\roman*)]
\item [(a)] $k=r$ and $T_i<Q_i$ for every $i$.
\item [(b)] The subgroups $T_i$ are pairwise disjoint strips in $L$, with $|\mathcal{P}(T_i)|\in\{1,2\}$ for every $i$, $|\mathcal{P}(T_j)|=2$ for some $j$ and $(T,Q)$ is listed in Table \ref{tab:groups}.
\item [(c)] $L\cong(\A_n)^r$, $L_\a\cong(\A_{n-1})^r$, $|V(\Ga)|=n^r$ and $n\geqslant10$.
\end{enumerate}

Assume that (a) holds and let $c=|Q:T|$. Since $L=GL_\a$, $G\cap L_\a=1$ and $k=r$, we have $|L_\a|=|L:G|=c^r=p^tm_1m$. The divisibilities $m_1\mid m$ and $m\mid p^e-1$ give $(p,m_1m)=1$, so $t=rv_p(c)$ and $r\mid t$. The action of $Q$ on $[Q:T]$ is faithful. Hence $Q\leqslant \Sy_c$. The simplicity of $Q$ excludes $c\leqslant 4$, while $c=5$ would give $Q\cong\A_5$, which has no proper nonabelian simple subgroup. Hence $c\geqslant 6$.
By \cite[Theorem 1.1 (2)(i)]{Lu2026}, $z$ is a primitive prime divisor of $p^t-1$ with $z\mid r$, Proposition \ref{prop:ppd} gives $t\mid z-1$. Hence $z>t$, whereas $z\mid r$ and $r\mid t$ give $z\leqslant r\leqslant t$, a contradiction. It remains to consider $(p,t)=(2,6)$. In this case $z\in{3,7}$, while $z\mid r$, $r\mid 6$ and $r\geqslant 3$. Hence $z=3$ and $r\in\{3, 6\}$. Since $e$ is a proper divisor of 6, we have $e\in\{1, 2, 3\}$, and $m\mid 2^e-1$ gives $m\in\{1, 3, 7\}$. Since $m_1\mid m$, every odd prime divisor of $m_1m$ occurs with exponent at most 2. The equality $c^r=2^6m_1m$ and $r\geqslant 3$ then show that $c$ has no odd prime divisor. Hence $m_1m=1$ and $c^r=2^6$, giving $c\in\{2, 4\}$, contrary to $c\geqslant 6$. Thus \cite[Theorem 1.1 (2)(i)]{Lu2026} is excluded.
If \cite[Theorem 1.1 (2)(ii)]{Lu2026} holds, that is, $(p,t)=(2,6)$ and $L$ is regular on the edges or arcs of $\Ga$. In the edge-regular case, $|L_\a|=|\Ga(\a)|/2=32$, contradicting $2^6\mid |L_\a|$. In the arc-regular case, $|L_\a|=|\Ga(\a)|=64$, so $c^r=64$. Since $r\geqslant 3$, this gives $c\leqslant 4$, again impossible. 
Finally, \cite[Theorem 1.1 (2)(iii)]{Lu2026} gives $t=2$, contradicting $r\mid t$ and $r\geqslant 3$. 
Hence none of \cite[Theorem 1.1 (2)(i)--(iii)]{Lu2026} occurs and so the above case (a) is excluded.

Assume that case (b) holds. Recall that the $T_i$ are pairwise disjoint strips in $L$ and $(T,Q)$ is listed in Table \ref{tab:groups}. Let $Q_j\in\{Q_1, \ldots, Q_r\}$ for any $j\in\{1,\cdots,r\}$. Suppose that $Q_j\notin\cup_{i=1}^{k}\mathcal{P}(T_i)$. Then $\pi_j(G)=1$, and projecting $L=GL_\a$ onto $Q_j$ gives $Q_j=R_j$, this is impossible as $R_j<Q_j$. Hence $Q_j\in\cup_{i=1}^{k}\mathcal{P}(T_i)$. As $j$ is arbitrary, $\{Q_1,\ldots,Q_r\}\subseteq\cup_{i=1}^{k}\mathcal{P}(T_i)$. The reverse inclusion is clear, so $\cup_{i=1}^{k}\mathcal{P}(T_i)=\{Q_1,\ldots,Q_r\}$.
The pairwise disjointness of the supports shows that, for each $j$, there is a unique $i$ such that $Q_j\in\mathcal{P}(T_i)$. In particular, $\pi_j(G)=\pi_j(T_i)\cong T$. Projecting $L=GL_\a$ onto $Q_j$ gives $Q_j=\pi_j(G)R_j$. Thus $Q=TR$ for a solvable proper subgroup $R<Q$, which is excluded by Lemma \ref{lem:strip-pair-exclusion} (ii).

Suppose that case (c) holds, that is, $L_\a\cong(\A_{n-1})^r$. Then $L_\a$ is nonsolvable, contrary to the solvability of $L_\a$.  Thus all of cases (a), (b) and (c) are impossible, and so is $T\not\cong Q$.

Suppose now that $T\cong Q$. For $L=GL_\a$ and $G\cap L_\a=1$, we have $|L_\a|=|Q|^{r-k}$. It follows that $r>k$ as $L_\a\neq 1$. Note that the kernel of $\pi_i|_{T_j}$ is normal in $T_j$ for every $j\in\{1,\cdots,k\}$. Hence either $\pi_i(T_j)=1$ or $\pi_i|_{T_j}$ is injective. In the latter case $|\pi_i(T_j)|=|T_j|=|Q_i|$, so $\pi_i(T_j)=Q_i$.
We claim that the supports $\mathcal{P}(T_1),\ldots,\mathcal{P}(T_k)$ are pairwise disjoint. Suppose that $Q_i\in\mathcal{P}(T_j)\cap \mathcal{P}(T_m)$ for distinct $j$ and $m$. Then $\pi_i(T_j)=\pi_i(T_m)=Q_i$. Since $[T_j,T_m]=1$, it follows that $[\pi_i(T_j),\pi_i(T_m)]=1$, and so $[Q_i,Q_i]=1$, which is impossible. Hence the supports are pairwise disjoint.
Moreover, $\bigcup_{j=1}^{k}\mathcal{P}(T_j)=\{Q_1,\ldots,Q_r\}$; otherwise, $\pi_i(G)=1$ for some $i$, and projection of $L=GL_\a$ onto $Q_i$ gives $Q_i=R_i$, contrary to $R_i<Q_i$. Hence $\sum_{j=1}^{k}|\mathcal{P}(T_j)|=r$. 
Given that $r>k$, there must exist some $j$ with $|\mathcal{P}(T_j)|\geqslant 2$. After relabeling if necessary, assume that $Q_1,\,Q_2\in\mathcal{P}(T_j)$.
The restrictions $\pi_1|_{T_j}$ and $\pi_2|_{T_j}$ are isomorphisms. Let $\bar\pi:L\to Q_1\times Q_2$ be the natural projection. Then $\bar\pi(G)=\bar\pi(T_j)$ is a full diagonal subgroup of $Q_1\times Q_2$. Thus $\bar\pi(G)=\{(x,x^\varphi)\mid x\in Q_1\}$ for some isomorphism $\varphi:Q_1\to Q_2$. Since $L=GL_\a$, we have $Q_1\times Q_2=\bar\pi(G)\bar\pi(L_\a)$. 
For any $q\in Q_1$, there exist $x\in Q_1$ and $(r_1,r_2)\in\bar\pi(L_\a)$ such that $(q,1)=(x,x^\varphi)(r_1,r_2)$. Since $\pi_i(L_\a)=R_i$ for $i=1,2$, we have $r_i\in R_i$. 
Comparing the two coordinates gives $x^\varphi=r_2^{-1}$ and $q=xr_1$, and so $x=(r_2^{-1})^{\varphi^{-1}}\in R_2^{\varphi^{-1}}$ and $q\in R_2^{\varphi^{-1}}R_1$. 
Hence $Q_1=R_2^{\varphi^{-1}}R_1$. Since $|R_2^{\varphi^{-1}}|=|R_1|$ and both are solvable proper subgroups of $Q_1$, this contradicts \cite[Lemma 5.2]{BPS2004}, whose four possible factorizations have nonsolvable proper factors. Thus $T\cong Q$ is impossible.

Every restriction $\pi_i|_{L_\a}$ is injective, completing Step 1.

\vspace{0.5em}
\emph{Step 2. We prove that $r=k$, $T_i<Q_i$ for every $i$ and $(Q,T,k)$ occurs in Tables \ref{tab:equal-prime-r}.}
By Step 1, each restriction $\pi_i|_{L_\a}$ is an isomorphism from $L_\a$ onto $R_i$. Hence $|L_\a|=|R_i|<|Q|$ for every $i$. The relations $L=GL_\a$ and $G\cap L_\a=1$ give $|Q|^r=|T|^k|R_i|$.
Suppose that $T\cong Q$. Then $|R_i|=|Q|^{r-k}$. The subgroup $R_i$ is nontrivial, so $r>k$. This yields $|R_i|\geqslant |Q|$, contrary to $R_i<Q_i\cong Q$. Thus $T\not\cong Q$.
The inclusion $R_i\leqslant Q_i$ implies $|R_i|\mid |Q|$. The identity $|T|^k=|Q|^r/|R_i|$ gives $|Q|^{r-1}\mid |T|^k$. Since $r\geqslant 2$, every prime divisor of $|Q|$ divides $|T|$, and so $\Pi(|Q|)\subseteq\Pi(|T|)$. On the other hand, $G\leqslant L$ gives $|T|^k\mid |Q|^r$, and so $\Pi(|T|)\subseteq\Pi(|Q|)$. Thus $\Pi(|Q|)=\Pi(|T|)$.

We next show that no minimal normal subgroup of $L$ is contained in $G$. Suppose that $Q_i\leqslant G$ for some $i$. Then $Q_i\unlhd G$, and Proposition \ref{obs} gives $Q_i\cong T$, contrary to $T\not\cong Q$. Hence no minimal normal subgroup of $L$ lies in $G$. 
It remains to prove that $G\ntrianglelefteq X$. Suppose that $G\unlhd X$. Then the unique minimal normal subgroup $L$ of $X$ is contained in $G$. 
The inclusion $G\leqslant L$ forces $G=L$, and so $T\cong Q$, a contradiction. Thus $G\ntrianglelefteq X$ and Lemma \ref{lem:framework} (1)(c)(ii)--(iv) now applies. Lemma \ref{lem:framework} (1)(c)(iii) cannot occur, because every nonisomorphic pair $(T,Q)$ in Table \ref{tab:groups} satisfies $\Pi(|T|)\neq \Pi(|Q|)$ by Lemma \ref{lem:strip-pair-exclusion} (i). 
For Lemma \ref{lem:framework} (1)(c)(iv), $Q\cong\A_n$, $L_\a\cong(\A_{n-1})^r$ and $n\geqslant 10$. Noting that  $r\geqslant 2$ and $|\A_{n-1}|>n$, we have $|L_\a|\geqslant |\A_{n-1}|^2>n|\A_{n-1}|=|\A_n|=|Q|$, contrary to $|L_\a|=|R_i|<|Q|$. Hence Lemma \ref{lem:framework} (1)(c)(iv) is also impossible and so Lemma \ref{lem:framework} (1)(c)(ii) holds, that is, $r=k$ and, after relabeling, $T_i<Q_i$ for every $i$. 
Let $d=|Q:T|$. The relations $L\cong Q^r$ and $G\cong T^r$ give $|L|/|G|=|Q|^r/|T|^r=d^r$. 
Since $G\leqslant L$ is regular on $V(\Ga)$, the transitivity of $L$ gives $L=GL_\a$, while regularity gives $G\cap L_\a=1$. Hence $|L:G|=|L_\a|=d^r$.
Together with $\Pi(|T|)=\Pi(|Q|)$, all the hypotheses of Lemma \ref{lem:equal-prime-divisors} are satisfied. Hence $(Q,T,k)$ occurs in Tables \ref{tab:equal-prime-r}. This completes Step 2.

\vspace{0.5em}
\emph{Step 3. Construction of the factorizations $Q_i=T_iB_i$ and $R_i=A_iB_i$.}
By Step 2, $r=k$ and $G=T_1\times\cdots\times T_r$ with $T_i<Q_i$ for every $i$. 
Step 1 shows that $\pi_i|_R:L_\a\to R_i$ is an isomorphism. The equalities $L=GL_\a$ and $G\cap L_\a=1$ yield $|L_\a|=|L:G|=d^r$, and so $|R_i|=d^r$. 
Projecting $L=GL_\a$ onto $Q_i$ gives $Q_i=T_iR_i$. For each $i$, define $H_i=\{x\in L_\a\mid \pi_i(x)\in T_i\}$ and $A_i=T_i\cap R_i$. 
The factorization $Q_i=T_iR_i$ gives $|A_i|=|T_i||R_i|/|Q_i|=d^{r-1}$, while $\pi_i(H_i)=A_i$. Injectivity of $\pi_i|_{L_\a}$ yields $|H_i|=d^{r-1}$ and $|L_\a:H_i|=d$. Define $K_i=\bigcap_{j\neq i}H_j$ and $B_i=\pi_i(K_i)$. 
The indices $|L_\a:H_j|=d$ give $|L_\a:K_i|\leqslant d^{r-1}$, so $|K_i|\geqslant d$, and so $|B_i|=|K_i|\geqslant d$. Moreover, $K_i\cap H_i=\bigcap_{j=1}^{r}H_j=L_\a\cap G=1$. If $b\in T_i\cap B_i$, choose $x\in K_i$ with $b=\pi_i(x)$. 
The condition $b\in T_i$ forces $x\in H_i$, whence $x=1$ and $b=1$. Thus $T_i\cap B_i=1$. 
The inclusion $T_iB_i\subseteq Q_i$ now gives $|T_i||B_i|=|T_iB_i|\leqslant |Q_i|=d|T_i|$, so $|B_i|\leqslant d$. Together with $|B_i|\geqslant d$, this yields $|B_i|=d$, and so $|T_iB_i|=|Q_i|$. Thus $Q_i=T_iB_i$.

Recall that $A_i=T_i\cap R_i$ and $|A_i|=d^{r-1}$. Since $A_i\leqslant T_i$ and $T_i\cap B_i=1$, we obtain $A_i\cap B_i=1$. Both $A_i$ and $B_i$ lie in $R_i$, and $|A_iB_i|=|A_i||B_i|=d^{r-1}d=d^r=|R_i|$, so $R_i=A_iB_i$. If $r=2$, then $K_1=H_2$ and $K_2=H_1$. The injectivity of $\pi_i|_{L_\a}$ gives $A_i\cong H_i$ and $B_i\cong K_i$, and so $A_1\cong B_2$ and $A_2\cong B_1$. This completes Step 3.

\vspace{0.5em}
\emph{Step 4. Determine the possible triples $(Q,T,k)$.}
By Step 2, the candidates $(Q,T,k)$ are listed in Table \ref{tab:equal-prime-r}. By Step 3, $Q_i=T_iB_i$ and $T_i\cap B_i=1$ for every $i\in\{1,\ldots,k\}$. Consider first row 1 of Table \ref{tab:equal-prime-r}, where $Q\cong \A_c$ and $T\cong \A_h$ with $5\leqslant h<c$. Fix $i\in\{1,\ldots,k\}$. If $B_i$ is solvable, then \cite[Theorem 3 and Table 4]{BL2021} gives either $T_i=\A_{c-1}$ or $\A_{c-2}\unlhd T_i\leqslant (\Sy_{c-2}\times \Sy_2)\cap \A_c$. The simplicity of $T_i$ in the latter case forces $T_i=\A_{c-2}$. When $B_i$ is nonsolvable, \cite[Theorem 1.2 and Table 1.1]{LWX2023} gives $T_i\cong \A_{c-1}$, $\A_{c-2}$ or $\A_{c-3}$, apart from $(Q_i,T_i)\cong (\A_{11},\A_7)$ or $(\A_{12},\A_7)$. These two exceptional pairs are excluded by $\Pi(|Q|)=\Pi(|T|)$, because $11\mid |Q|$ but $11\nmid |T|$. Thus $h=c-j$ for some $j\in\{1,2,3\}$, giving the alternating-group possibilities in Table \ref{tab:exact-pa}.

\begin{table}[htbp]
\centering
\renewcommand{\arraystretch}{1.25}
\setlength{\tabcolsep}{7pt}
{\scriptsize
\caption{Necessary product-action possibilities in Theorem \ref{main}.}
\label{tab:exact-pa}
\begin{tabular}{|c|c|c|p{7.2cm}|}
\hline
$Q$ & $T$ & $k$ & remark \\
\hline
$\A_c$           & $\A_{c-j}$   & $k\geqslant 2$ & $j\in\{1,2,3\}$, $c-j\geqslant 5$, $(c-j,c]$ prime-free, $d=c!/(c-j)!$, $kv_p(d)\leqslant v_p(|Q|)$ for $p\mid d$, and $d^kc\leqslant c!/2$. \\
\hline
$\POmega_8^+(2)$ & $\Sp_6(2)$   & 2 & \\
\hline
$\M_{24}$        & $\M_{23}$    & 2, 3 & \\
\hline
\end{tabular}}
\end{table}

For row 2 of Table \ref{tab:equal-prime-r}, we have $Q\cong \PSp_{2m}(q)$, $T\cong \POmega^-_{2m}(q)$ and $k=2$, where $m$ and $q$ are even. A solvable $B_i$ is excluded by \cite[Theorem 2.12]{FLLWXZ2025}, which requires $m$ to be odd. If $B_i$ is nonsolvable, \cite[Table 1]{LWX2023} contains no factorization $Q_i=T_iB_i$ with $Q_i\cong \PSp_{2m}(q)$ and $T_i\cong \POmega^-_{2m}(q)$. Hence row 2 is impossible. In row 3 of Table \ref{tab:equal-prime-r}, $Q_i\cong \POmega_{2m+1}(q)$ and $T_i\cong \POmega^-_{2m}(q)$, where $m\geqslant 4$ is even and $q$ is odd. Neither \cite[Theorem 3 and Table 4]{BL2021} for solvable $B_i$ nor \cite[Theorem 1.2 and Table 1]{LWX2023} for nonsolvable $B_i$ contains such a factorization. Hence row 3 is also excluded.

For row 4 of Table \ref{tab:equal-prime-r}, $Q_i\cong \POmega^+_{2m}(q)$ and $T_i\cong \POmega_{2m-1}(q)$, where $m\geqslant 4$ is even. The solvable-factor classification in \cite[Theorem 3 and Table 4]{BL2021} gives no factorization of this form. For nonsolvable $B_i$, \cite[Theorem 1.2 and Table 1]{LWX2023} leaves only $Q_i\cong \POmega_8^+(2)$, $T_i\cong \Sp_6(2)$ and $B_i\cong \Sy_5$. Hence $(Q,T,k)=(\POmega_8^+(2),\Sp_6(2),2)$, giving row 2 of Table \ref{tab:exact-pa}. 
Row 5 of Table \ref{tab:equal-prime-r} is excluded by the last assertion of Lemma \ref{lem:equal-prime-divisors}, because $G\cap L_\a=1$. For row 6 of Table \ref{tab:equal-prime-r}, $(Q,T,k)=(\M_{11},\PSL_2(11),2)$ and $d=12$. Step 3 gives $\M_{11}=T_iB_i$, where $T_i\cong \PSL_2(11)$, $|B_i|=12$ and $T_i\cap B_i=1$, but no such product occurs in \cite[Table 5]{G2006}. 
Hence row 6 of Table \ref{tab:equal-prime-r} is impossible. In row 8 of Table \ref{tab:equal-prime-r}, $(Q,T)=(\M_{24},\M_{23})$, $k\in\{2,3\}$ and $|B_i|=24$. Every group of order $24$ is solvable, and \cite[Theorem 3 and Table 4]{BL2021} gives $B_i\cong \Sy_4$, $\D_{24}$, $\D_8\times\ZZ_3$, $\ZZ_3:\D_8$ or $\A_4\times\ZZ_2$. Hence $(Q,T,k)=(\M_{24},\M_{23},2)$ or $(\M_{24},\M_{23},3)$, giving row 3 of Table \ref{tab:exact-pa}.

Finally, row 7 of Table \ref{tab:equal-prime-r} gives $(Q,T,k)=(\M_{12},\M_{11},2)$ with $d=12$. By Step 3, there exists a subgroup $R_i\leqslant \M_{12}$ of order $144$ such that $R_i=A_iB_i$, where $|A_i|=|B_i|=12$ and $A_i\cap B_i=1$. 
The Magma computation in Appendix \ref{app:M12} shows that $\M_{12}$ has exactly two conjugacy classes of subgroups of order 144, and each representative has a unique conjugacy class of subgroups of order 12. 
Thus $A_i$ and $B_i$ are conjugate in $R_i$. Let $\Omega$ be the set of right cosets of $B_i$ in $R_i$, so $|\Omega|=12$. The conjugacy of $A_i$ and $B_i$ implies that $A_i$ fixes a point of $\Omega$. On the other hand, $R_i=A_iB_i=B_iA_i$ implies that $A_i$ is transitive on $\Omega$, a contradiction. Thus row 7 is excluded. 
This completes Step 4 and the proof of the lemma. \qed

\begin{lemma}\label{lem:TW1}
Suppose that $X$ is of twisted wreath type. Then $G=\soc(X)\unlhd X$, and $\Ga$ is $X$-normal.
\end{lemma}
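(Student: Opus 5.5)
The plan is to combine the regularity of the socle in the twisted wreath type with Proposition \ref{subgroup}. Suppose that $X$ is of twisted wreath type. Then $X$ has a unique minimal normal subgroup $L=\soc(X)\cong Q^r$, with $Q$ nonabelian simple and $r\geqslant 2$, and $L$ is regular on $V(\Ga)$ (see \cite[Theorem 1]{P1993}). In particular, $\soc(X)$ is nonabelian.

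First, since $G\cong T^k$ is a transitive nonabelian characteristically simple subgroup of the quasiprimitive group $X$, Proposition \ref{subgroup} gives $G\leqslant L$. Second, both $G$ and $L$ are regular on $V(\Ga)$, so
\[
|G|=|V(\Ga)|=|L|.
\]
Together with $G\leqslant L$, this forces $G=L=\soc(X)$. Since $\soc(X)$ is characteristic in $X$, it is normal in $X$, and so $G\unlhd X$. Hence $\Ga$ is $X$-normal by definition.

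Alternatively, we could read this off from Lemma \ref{lem:framework}(1). None of the non-normal alternatives (a)--(c) listed there is of twisted wreath type. Hence, if $X$ is of twisted wreath type, then $G\unlhd X$ directly. Combined with the order argument above, this again gives $G=\soc(X)$.

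There is no real obstacle here. The only point to check is that the twisted wreath type has a regular socle. This is part of the definition and structure theorem for quasiprimitive groups in \cite{P1993}, and the proof of Lemma \ref{lem:framework} already uses this fact.
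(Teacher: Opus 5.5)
Your proposal is correct and follows essentially the same route as the paper: the regularity of the twisted wreath socle together with Proposition \ref{subgroup} gives $G\leqslant\soc(X)$, and equality of orders forces $G=\soc(X)\unlhd X$. Your alternative reading via Lemma \ref{lem:framework}(1) is also valid, but it is not needed.
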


\pf\, Since $X$ is of twisted wreath type, $\soc(X)$ is regular on $V(\Ga)$. By Proposition \ref{subgroup}, $G\leqslant \soc(X)$. As $G$ is transitive on $V(\Ga)$, the regularity of $\soc(X)$ forces $G=\soc(X)$. Thus $G=\soc(X)\unlhd X$, and $\Ga$ is $X$-normal.\qed

\vspace{0.5em}
The preceding lemmas determine all five quasiprimitive types. We now combine these results to prove Theorem \ref{thm:qprim}.

\textbf{Proof of Theorem \ref{thm:qprim}}

\pf\, Let $\Ga=\Cay(G,S)$ be connected and $(X,2)$-arc-transitive, where $G=T^k$ with $k\geqslant 2$, and suppose that $X$ is quasiprimitive on $V(\Ga)$. By \cite[Theorem 1]{P1993}, $X$ is of affine, almost simple, simple diagonal, product action or twisted wreath type. Lemma \ref{lem:affine1} rules out the affine case, while Lemma \ref{lem:diagonal1} excludes the simple diagonal case. If $X$ is almost simple, Lemma \ref{lem:AS1} gives either $G\unlhd X$ or $\Ga$ is complete. If $X$ is of product action type and $\Ga$ is neither $X$-normal nor complete, Lemma \ref{lem:PA1} gives $\soc(X)\cong Q^k$ with $(Q,T,k)$ listed in Table \ref{tab:exact-pa}. In the twisted wreath case, Lemma \ref{lem:TW1} gives $G=\soc(X)\unlhd X$. These are precisely the possibilities stated in the theorem.  \qed

\section{The non-quasiprimitive case}\label{sec5}

Assume throughout this section that Hypothesis \ref{hypothesis} holds and that $X$ is not quasiprimitive on $V(\Ga)$. Let $N$ be a maximal intransitive normal subgroup of $X$, $\overline{X}=X/N$, $\overline{G}=GN/N$ and $\overline{\a}=\a^N$.  Now set $L=\soc(\overline{X})=Q_1\times\cdots\times Q_r\cong Q^r$ for some $r\geqslant1$, where each $Q_i\cong Q$ is simple.  We first state the result for the non-quasiprimitive case. 

\begin{theorem}\label{thm:non-quasiprimitive}
Suppose that Hypothesis \ref{hypothesis} holds and $X$ is not quasiprimitive on $V(\Ga)$. Then $\Ga$ is an $N$-cover of $\Ga_N$, $\overline{X}$ is quasiprimitive on $V(\Ga_N)$ and one of the following holds.
\begin{enumerate}[leftmargin=1.2em]
\item $\overline{X}$ is of affine type, $T\cong \PSL_2(7)$ and $\Ga_N\cong \K_{2^{3\ell}}$.
\item $\overline{X}$ is of almost simple type, either $\Ga_N$ is complete, or $\ell=1$ and one of the following holds: $\overline{G}=\soc(\overline{X})$; $\Ga_N$ is $(\soc(\overline{X}),2)$-arc-transitive; or one of the possibilities in Tables \ref{tab:AS1}, \ref{tab:AS2}, \ref{tab:AS3}, \ref{tab:AS4} and \ref{tab:AS5} occurs.
\item $\overline{X}$ is of product action type with $\soc(\overline{X})\cong Q^\ell$, either $\overline{G}\unlhd\overline{X}$, or $(Q,T,\ell)$ is listed in Table \ref{tab:equal-prime-r}. Moreover, if $N\leqslant G$, then $(Q,T,r)$ occurs in Table \ref{tab:exact-pa}.
\item $\overline{X}$ is of twisted wreath action type and $\Ga_N$ is a Cayley graph of $\overline{G}=\soc(\overline{X})$.
\end{enumerate}
\end{theorem}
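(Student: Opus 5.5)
We first set up the reduction and then go through the types of $\overline{X}$ in turn. In the proof of Lemma \ref{lem:framework}(2) we saw that $N$ has at least three orbits: two orbits would give an index-$2$ subgroup of $G=G'$. Proposition \ref{prop:NQ} then shows that $N$ is semiregular, $\Ga$ is an $N$-cover of $\Ga_N$, and $\Ga_N$ is $(\overline{X},2)$-arc-transitive. Lemma \ref{lem:framework}(2) shows that $\overline{X}$ is quasiprimitive and $\overline{G}\cong T^\ell$. Since $\Ga_N$ is connected and $(\overline{X},2)$-arc-transitive, \cite[Theorem 2]{P1993} excludes the simple diagonal type, so case (2)(c) of Lemma \ref{lem:framework} does not arise.

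\emph{Affine type.} Lemma \ref{lem:framework}(2)(a) gives $T\cong\PSL_2(7)$ and $\soc(\overline{X})\cong\ZZ_2^{3\ell}$. For completeness, note that $\overline{X}_{\overline{\a}}\geqslant$ (image of) $\overline{G}_{\overline\a}$, which contains a copy of $\SL_3(2)^\ell$ acting on $\ZZ_2^{3\ell}$ with the natural module structure on each factor. The neighbourhood $\Ga_N(\overline\a)$ is a $2$-transitive orbit of the point stabiliser. I would first try to show that it must be all nonzero vectors, using the affine $2$-arc-transitive classification \cite{IP1993} together with the presence of this large $\overline{G}$-induced subgroup; this forces $\Ga_N\cong\K_{2^{3\ell}}$.

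\emph{Twisted wreath type.} This is exactly Lemma \ref{lem:framework}(2)(e).

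\emph{Almost simple type.} If $\ell\geqslant2$, Lemma \ref{lem:framework}(2)(b) gives that $\Ga_N$ is complete. So suppose $\ell=1$. Then $\overline{G}\cong T$ is a transitive simple subgroup of $Q=\soc(\overline{X})$, and we have the factorization $Q=\overline{G}Q_{\overline\a}$. If $\overline{G}=Q$ or $Q$ is itself $2$-arc-transitive on $\Ga_N$, we are done. Otherwise we combine two ingredients: the list of factorizations of almost simple groups with a simple transitive factor (\cite{LPS2000}, \cite{LWX2023}), and the structure of vertex stabilisers of $2$-arc-transitive graphs (the $2$-transitive local action $\overline{X}_{\overline\a}^{\Ga_N(\overline\a)}$). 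This should cut the possibilities down to the finite lists recorded in Tables \ref{tab:AS1}--\ref{tab:AS5}. I expect this step to be the main obstacle. It needs a case-by-case sweep through the factorization tables, checking which $(Q,T,Q_{\overline\a})$ admit a $2$-transitive local action that $Q$ itself does not realise. I would organise the sweep by the classification of finite $2$-transitive groups applied to $\overline{X}_{\overline\a}^{\Ga_N(\overline\a)}$.

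\emph{Product action type.} Write $\soc(\overline{X})\cong Q^r$. If $\overline{G}\unlhd\overline{X}$, we are done. Otherwise we rerun Steps 1–2 of Lemma \ref{lem:PA1} verbatim with $\Ga_N$, $\overline{X}$, $\overline{G}$ in place of $\Ga$, $X$, $G$. These steps used only the following facts:
\begin{itemize}
\item $\Ga_N$ is connected, $(\overline{X},2)$-arc-transitive and non-bipartite (again via $G'=G$);
\item $\overline{G}\leqslant L$ is transitive;
\item the results of \cite{Lu2026} and \cite{LS2023}.
\end{itemize}
The one change is that $\overline{G}$ need not be regular, so $\overline{G}\cap L_{\overline\a}$ may be nontrivial. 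This is exactly why Lemma \ref{lem:equal-prime-divisors} was stated with $|T^r\cap R|$. Step 1 and the order identity then give $\ell=r$ with $T_i<Q_i$, $\Pi(|Q|)=\Pi(|T|)$, and $(Q,T,r)$ in Table \ref{tab:equal-prime-r}. Cases (2)(d)(i),(ii),(iv),(v) of Lemma \ref{lem:framework} are eliminated as in Lemma \ref{lem:PA1}, using Lemma \ref{lem:strip-pair-exclusion}.

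If $N\leqslant G$, then $N$ is semiregular and $|V(\Ga_N)|=|G|/|N|=|\overline{G}|$, so $\overline{G}$ is regular on $V(\Ga_N)$. Now $\Ga_N$ is a Cayley graph of $\overline{G}\cong T^\ell$ with $\ell=r\geqslant2$, and $\overline{X}$ is quasiprimitive. Hence Theorem \ref{thm:qprim} (Lemma \ref{lem:PA1}) applies directly and places $(Q,T,r)$ in Table \ref{tab:exact-pa}.
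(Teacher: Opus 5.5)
Your skeleton matches the paper's: Lemma~\ref{lem:framework}(2), exclusion of the diagonal type by \cite[Theorem 2]{P1993}, then one case per type. The twisted wreath case is fine. The gap is the almost simple case, which is where Tables~\ref{tab:AS1}--\ref{tab:AS5} and the $(\soc(\overline{X}),2)$-arc-transitive alternative come from: it is not proved. A ``sweep organised by the 2-transitive local action'' lacks the two reductions that make the answer a finite list.
\begin{enumerate}
\item \emph{Split on $N\leqslant G$.} If $N\leqslant G$, then $\overline{G}\cong T$ is regular, so $\Ga_N$ is a 2-arc-transitive Cayley graph of a simple group. Then \cite[Theorem 1.8]{LWX2022} gives completeness, normality (forcing $T=L$), or Table~\ref{tab:AS1}. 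Otherwise $T\cap L_{\overline{\a}}\neq1$, and \cite[Lemmas 4.1 and 4.2]{L2025} show that $L_{\overline{\a}}$ is transitive on $\Ga_N(\overline{\a})$ with the same local socle as $\overline{X}_{\overline{\a}}$.
\item \emph{Split on the solvability of $L_{\overline{\a}}$.} Tables~\ref{tab:AS2}--\ref{tab:AS5} are exactly the factorizations $L=TL_{\overline{\a}}$ with a solvable factor from \cite{LX2022} (and \cite{DLY2026} for prime valency). They are filtered by $|L:T|\mid|L_{\overline{\a}}|$, $T\cap L_{\overline{\a}}\neq1$, $p^e\mid|L_{\overline{\a}}|$ and some local-action arguments.
\item \emph{Nonsolvable $L_{\overline{\a}}$.} Here nothing is tabulated. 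You must instead prove that $L$ is already locally 2-transitive. Set $H=L_{\overline{\a}}^{\Ga_N(\overline{\a})}$ and $Y=\overline{X}_{\overline{\a}}^{\Ga_N(\overline{\a})}$. One eliminates the two exceptional local actions allowed by \cite{L2025}: $H\cong\PSL_2(8)$ of degree $28$ with $Y\cong\PGammaL_2(8)$, and $\ZZ_p^2:\SL_2(5)$ with $p\in\{19,29,59\}$. The tools are $|Y:H|\mid|\Out(L)|$ and the factorization lists in \cite{HLS1987,LWX2022}.
\end{enumerate}
Your plan has neither the regular/nonregular split nor the nonsolvable-stabiliser argument. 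Without the latter, factorizations with nonsolvable point stabiliser cannot be reduced to the stated tables.

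Two further steps fail as written.
\begin{enumerate}
\item \emph{Product action.} Steps 1--2 of Lemma~\ref{lem:PA1} cannot be rerun verbatim. In Step 1, case (a), $r\mid t$ comes from the \emph{equality} $|L_\a|=|L:G|=c^r=p^tm_1m$, i.e.\ from regularity. That $r\mid t$ is exactly what contradicts a primitive prime divisor $z\mid r$ with $z>t$, and what excludes \cite[Theorem 1.1(2)(iii)]{Lu2026}. With $\overline{G}$ nonregular you only have $c^r\mid p^tm_1m$, so the case $p\nmid c$ is new and needs its own argument. The paper uses $c^r\leqslant m_1m<p^{2e}\leqslant p^t$ with $r>t$ to get $c<p$, contradicting $p\mid|Q|$ and $Q\leqslant\Sy_c$. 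Likewise, the exclusion of $T\cong Q$ and the proof of $\Pi(|Q|)=\Pi(|T|)$ used $|Q|^r=|T|^k|R_i|$. They must be redone from $\overline{G}\ntrianglelefteq\overline{X}$ and $|V(\Ga_N)|\mid|\overline{G}|$, and $\ell=1$ has no counterpart in Lemma~\ref{lem:PA1}.
\item \emph{The $N\leqslant G$ assertion.} Before Theorem~\ref{thm:qprim} yields Table~\ref{tab:exact-pa}, you must check that $N\leqslant G$ forces $\overline{G}\ntrianglelefteq\overline{X}$, since $\overline{G}$ is regular and $L$ is not. You must also check that $\Ga_N$ is not complete, since a 2-transitive group is affine or almost simple.
\item \emph{Affine case.} Your claim that $\overline{G}_{\overline{\a}}$ contains $\SL_3(2)^\ell$ is false, because $|\overline{G}_{\overline{\a}}|=168^\ell/8^\ell=21^\ell$. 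The correct input is $\overline{G}\cap L=1$, so $\overline{G}$ embeds in $\overline{X}/L\cong\overline{X}_{\overline{\a}}$. You still have to rule out the folded cubes, the graphs $P_m(a)$ with $a\geqslant2$, and the two code graphs of \cite{IP1993}. Order comparisons suffice: $7^\ell\nmid|\Aut(\Box_{3\ell+1})|$; $3\ell\leqslant v_2(|\PGL_m(2^a)|)=am(m-1)/2$ with $m^a=3\ell$; and $3\ell\notin\{10,11\}$.
\end{enumerate}
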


We first record the normal quotient structure that will be used throughout the proof.

\begin{lemma}\label{lem:normal-quotient}
Suppose that Hypothesis \ref{hypothesis} holds and $X$ is not quasiprimitive on $V(\Ga)$. Then $\Ga$ is an $N$-cover of the non-bipartite graph $\Ga_N$, and $\overline{X}$ is quasiprimitive on $V(\Ga_N)$. Moreover, $\overline{G}\cong T^\ell$ for some $1\leqslant\ell\leqslant k$, and $\overline{G}$ is regular on $V(\Ga_N)$ if and only if $N\leqslant G$.
\end{lemma}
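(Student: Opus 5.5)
The plan is to assemble the lemma from Proposition \ref{prop:NQ}, the non-quasiprimitive part of the proof of Lemma \ref{lem:framework}, and a counting argument.

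\emph{Step 1: $N$ has at least three orbits, so Proposition \ref{prop:NQ} applies.} As in the proof of Lemma \ref{lem:framework}(2), if $N$ had two orbits, the transitive group $G$ would act on them and so have a subgroup of index $2$. This is impossible since $G=G'$. Since $N$ is intransitive, it therefore has at least three orbits. Proposition \ref{prop:NQ} then shows that $N$ is semiregular, $\Ga_N$ is $(X/N,2)$-arc-transitive, and $\Ga$ is an $N$-cover of $\Ga_N$. The quasiprimitivity of $\overline{X}$ on $V(\Ga_N)$ is proved exactly as in Lemma \ref{lem:framework}(2). Suppose $M/N$ is a nontrivial normal subgroup of $\overline{X}$ that is intransitive on the $N$-orbits. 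Then $M$ is an intransitive normal subgroup of $X$ properly containing $N$, contradicting the maximality of $N$. The same argument shows that the kernel of $X$ on the $N$-orbits is $N$ itself. The isomorphism $\overline{G}\cong G/(G\cap N)\cong T^\ell$ with $1\leqslant \ell\leqslant k$ follows from Proposition \ref{obs}, because $G\cap N\unlhd G$ and $G\nleqslant N$ ($G$ is transitive, $N$ is not).

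\emph{Step 2: $\Ga_N$ is not bipartite.} Suppose $\Ga_N$ were bipartite. Since $\Ga_N$ is connected, its bipartition is unique, so $\overline{X}$ preserves it. As $\overline{G}$ is transitive on $V(\Ga_N)$, the stabilizer in $\overline{G}$ of one part has index $2$ in $\overline{G}$. But $\overline{G}\cong T^\ell$ is perfect, so this is impossible. Equivalently, pulling the bipartition back to $V(\Ga)$ would give $G$ an index-$2$ subgroup.

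\emph{Step 3: $\overline{G}$ is regular on $V(\Ga_N)$ if and only if $N\leqslant G$.} Since $N$ is semiregular, every $N$-orbit has size $|N|$, so $|V(\Ga_N)|=|G|/|N|$. Since $\overline{G}$ is transitive on $V(\Ga_N)$, it is regular if and only if $|\overline{G}|=|V(\Ga_N)|$. Now $|\overline{G}|=|G|/|G\cap N|$. Hence regularity is equivalent to $|G\cap N|=|N|$, that is, to $N\leqslant G$.

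I expect no serious obstacle here. The only point needing care is the counting in Step 3: it relies on the semiregularity of $N$ from Proposition \ref{prop:NQ}, which is why Step 1 must first establish that $N$ has at least three orbits.
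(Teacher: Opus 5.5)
Your proposal is correct and follows essentially the same route as the paper. Both arguments exclude two $N$-orbits via the perfectness of $G$, invoke Proposition \ref{prop:NQ}, use the maximality of $N$ to identify the kernel and get quasiprimitivity, apply Proposition \ref{obs} for $\overline{G}\cong T^\ell$, and use an index-$2$ argument for non-bipartiteness. The regularity criterion is the same order count, comparing $|V(\Ga_N)|=|G|/|N|$ with $|\overline{G}|=|G|/|G\cap N|$.
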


\pf\, Let $\mathcal{B}$ be the set of $N$-orbits on $V(\Ga)$. The transitivity of $G$ on $V(\Ga)$ induces a transitive action on $\mathcal{B}$. If $|\mathcal{B}|=2$, then $G$ has a subgroup of index 2, which is impossible because $G\cong T^k$ and $T$ is nonabelian simple. Thus $N$ has at least three orbits. Recall that $\Ga$ is a connected $(X,2)$-arc-transitive graph. 
Proposition \ref{prop:NQ} shows that $N$ is semiregular, $\Ga$ is an $N$-cover of $\Ga_N$, and $\Ga_N$ is $(\overline{X},2)$-arc-transitive. Let $K$ be the kernel of the action of $X$ on $\mathcal{B}$. Then $N\leqslant K\unlhd X$, while $K$ is intransitive on $V(\Ga)$. The maximality of $N$ forces $K=N$. If $M/N$ is a nontrivial normal subgroup of $\overline{X}$, then $M>N$ and so $M$ is transitive on $V(\Ga)$. Thus $M/N$ is transitive on $V(\Ga_N)$, proving that $\overline{X}$ is a quasiprimitive permutation group on $V(\Ga_N)$. 

The subgroup $G\cap N$ is normal in $G$, so Proposition \ref{obs} gives $G/(G\cap N)\cong T^\ell$ for some $1\leqslant \ell\leqslant k$. Hence $\overline{G}=GN/N\cong T^\ell$, and $\overline{G}$ is transitive on $V(\Ga_N)$. 
Suppose that $\Gamma_N$ is bipartite. Since $\overline{G}\cong T^\ell$ is vertex-transitive, some element of $\overline{G}$ swaps the two parts of the bipartition. Then $\overline{G}$ has a subgroup of index 2, impossible for $T^\ell$. Hence $\Gamma_N$ is non-bipartite.
Noting that $N$ is semiregular and $G$ is regular on $V(\Ga)$ yields $|V(\Ga_N)|=|G|/|N|$ and $|\overline{G}|= |G|/|G\cap N|$. Thus $\overline{G}$ is regular on $V(\Ga_N)$ if and only if $|G\cap N|=|N|$, i.e.\, $N\leqslant G$.
This completes the proof.   \qed

\vspace{0.5em}

Now that $\overline{X}$ is quasiprimitive on $V(\Ga_N)$, we consider the five types in the classification of quasiprimitive permutation groups separately.

\begin{lemma}\label{lem:affine2}
If $\overline{X}$ is of affine type, then $T\cong \PSL_2(7)$ and $\Ga_N\cong \K_{2^{3\ell}}$.
\end{lemma}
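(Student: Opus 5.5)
By Lemma \ref{lem:framework}(2)(a), since $\overline{X}$ is of affine type we already have $T\cong\PSL_2(7)$. Two cases remain: $\ell=1$ with $\Ga_N\cong\K_8$, or $\ell\geqslant 2$ with $\Ga_N$ a Cayley graph of $\soc(\overline{X})\cong\ZZ_2^{3\ell}$. The case $\ell=1$ is exactly the claim, since $\K_8=\K_{2^{3}}$. So the work is to show that $\Ga_N$ is complete when $\ell\geqslant 2$.

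For $\ell\geqslant 2$, write $V=\soc(\overline{X})\cong\ZZ_2^{3\ell}$, which is regular on $V(\Ga_N)$, and put $\overline{X}=V\rtimes\overline{X}_{\overline{\a}}$ with $\overline{X}_{\overline{\a}}\leqslant\GL_{3\ell}(2)$. By Lemma \ref{lem:normal-quotient}, $\Ga_N$ is $(\overline{X},2)$-arc-transitive, and since $\overline{X}$ is quasiprimitive and affine it is primitive, so $\overline{X}_{\overline{\a}}$ is irreducible on $V$. We also have $\overline{G}\cong\PSL_2(7)^\ell$ transitive on $8^\ell$ points. Hence $\overline{X}=V\overline{G}_{\overline{\a}}\ldots$; more precisely, $\overline{G}\cap V=1$ because $\overline{G}$ has no nontrivial solvable normal subgroup. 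It follows that $\overline{G}$ embeds in $\overline{X}/V\cong\overline{X}_{\overline{\a}}\leqslant\GL_{3\ell}(2)$, and $|\overline{G}:\overline{G}_{\overline{\a}}|=8^\ell$. From \cite[Theorem 1.3 (0)]{DS2020} (the affine case), I expect $\overline{G}_{\overline{\a}}\cong(\ZZ_7:\ZZ_3)^\ell$, with $\overline{G}$ acting on $V=\ZZ_2^3\oplus\cdots\oplus\ZZ_2^3$ factorwise, each $\PSL_2(7)\cong\GL_3(2)$ acting naturally on its own summand.

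The key step is the classification of 2-arc-transitive affine graphs due to Ivanov and Praeger \cite{IP1993}. In an affine 2-arc-transitive graph, the neighbourhood $S\subset V$ of $0$ spans $V$, and $\overline{X}_{\overline{\a}}$ acts 2-transitively on $S$. The possible graphs are $\K_{2^n}$, $\K_{2^n,2^n}$ minus a matching, the cubes $H(n,2)$, the folded cubes $\Box_n$, the graphs $\Ga(C_{23})$ and $\Ga(C_{22})$ (Example \ref{ex:golay}), the projective space graphs $P_m(a)$ (Example \ref{ex:projective-space-graph}), and a few further sporadic examples. I would eliminate everything except $\K_{2^{3\ell}}$ using three facts. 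First, $\Ga_N$ is non-bipartite, which rules out $H(n,2)$ and $\K_{2^n,2^n}$ minus a matching. Second, the vertex count $2^{3\ell}$ is fixed. Third, $\overline{X}_{\overline{\a}}$ must contain a subgroup isomorphic to $\PSL_2(7)^\ell$ with $\ell\geqslant 2$. For $\Box_n$ we have $\overline{X}_{\overline{\a}}\leqslant\Sy_n$ with $n=3\ell+1$, and $\Sy_{3\ell+1}$ contains no $\PSL_2(7)^\ell$ because the smallest faithful permutation degree of $\PSL_2(7)^\ell$ is $7\ell>3\ell+1$. The Golay graphs have $\M_{23}$ or $\M_{22}.2$ as stabiliser, with $3\ell=11$ or $10$, which is impossible since neither $11$ nor $10$ is divisible by $3$. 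For $P_m(a)$ the stabiliser is almost simple, inside $\PGammaL_m(2^a)$, so it cannot contain $\PSL_2(7)^\ell$ for $\ell\geqslant 2$: the only possible embedding would be into the simple socle $\PSL_m(2^a)$ as a non-normal subgroup, and the required order and rank conditions for $\PSL_2(7)^2$ fail against the constraint $3\ell=m^a$. The remaining sporadic examples from \cite{IP1993} I would check the same way, by comparing orders and vertex counts. Once these are gone, $\Ga_N\cong\K_{2^{3\ell}}$.

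I expect the main obstacle to be handling $P_m(a)$ and the less explicit families in the Ivanov--Praeger list rigorously. My first approach is the cleaner structural argument: $\overline{X}_{\overline{\a}}$ is 2-transitive on $S$ and contains the nonabelian characteristically simple group $\PSL_2(7)^\ell$ with $\ell\geqslant 2$, so I would examine the socle of this 2-transitive group. An affine socle would force $|S|=2^{c}$. An almost simple socle contains no $\PSL_2(7)^\ell$ with $\ell\geqslant 2$ as a normal subgroup, and the order and degree bounds above rule out non-normal embeddings. Failing that, I would fall back on checking the tables of \cite{IP1993} case by case.
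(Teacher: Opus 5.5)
Your overall route matches the paper's. You use Lemma~\ref{lem:framework}(2)(a) to get $T\cong\PSL_2(7)$ and $\soc(\overline{X})\cong\ZZ_2^{3\ell}$, note $\overline{G}\cap\soc(\overline{X})=1$ so that $\overline{G}$ embeds in $\overline{X}_{\overline{\a}}$, and then run through the Ivanov--Praeger list. Since $\overline{X}$ is primitive, that list is exactly: complete graphs, folded cubes, $P_m(a)$, $\Ga(C_{23})$ and $\Ga(C_{22})$. So the bipartite examples and the unspecified ``further sporadic examples'' need no discussion. Your handling of folded cubes is fine; the minimal degree $7\ell>3\ell+1$ replaces the paper's $v_7(n!)$ count. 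Your handling of the two code graphs is also fine.

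The gap is the projective-space graphs. You claim $\PGammaL_m(2^a)$ with $m^a=3\ell$ cannot contain $\PSL_2(7)^\ell$ because ``order and rank conditions fail''. This is false for $a=1$: then $m=3\ell$, and $\SL_3(2)^\ell$ sits block-diagonally in $\SL_{3\ell}(2)=\PSL_{3\ell}(2)$. That case does not contradict the lemma only because $P_m(1)\cong\K_{2^m}$ (Example~\ref{ex:projective-space-graph}). Your argument never uses this fact, and as written it ``excludes'' a case that actually yields the conclusion.

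For $a\geqslant 2$ you give no concrete argument. One that works, and is the paper's, runs as follows:
\begin{itemize}
\item $\overline{G}$ is perfect and $\PGammaL_m(2^a)/\PGL_m(2^a)$ is cyclic, so $\overline{G}\lesssim\PGL_m(2^a)$.
\item Comparing $2$-parts gives $3\ell=v_2(|\overline{G}|)\leqslant am(m-1)/2$.
\item Together with $m^a=3\ell$ (so $3\mid m$ and $m\geqslant 3$), this gives $m^{a-1}\leqslant a(m-1)/2$.
\item This contradicts $m^{a-1}\geqslant 1+(a-1)(m-1)$ when $a\geqslant 2$. Hence only $a=1$ survives, and it gives $\K_{2^{3\ell}}$.
\end{itemize}

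Your fallback ``structural'' argument has the same defect, because it never uses non-completeness. For $\Ga_N\cong\K_{2^{3\ell}}$ one can have $\overline{X}_{\overline{\a}}=\GL_{3\ell}(2)$. This group is almost simple, $2$-transitive on the $2^{3\ell}-1$ neighbours, and contains $\SL_3(2)^\ell$ as a non-normal subgroup. So no order or degree bound can rule out non-normal embeddings of $\PSL_2(7)^\ell$ in an almost simple $2$-transitive stabiliser.
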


\pf\, Suppose that $\overline{X}$ is of affine type. Then, by Lemma \ref{lem:framework} (2)(a), $T\cong\PSL_2(7)$ and $L=\soc(\overline{X})\cong\ZZ_2^{3\ell}$. The subgroup $\overline{G}\cap L$ is abelian and normal in $\overline{G}\cong T^\ell$, so $\overline{G}\cap L=1$. An affine quasiprimitive group is primitive, and the classification in \cite{IP1993} shows that $\Ga_N$ is a complete graph, a folded cube, a projective-space graph, or one of $\Ga(C_{23})$ and $\Ga(C_{22})$. In the complete case, the regularity of $L$ gives $|V(\Ga_N)|=|L|=2^{3\ell}$, and so $\Ga_N\cong \K_{2^{3\ell}}$.

Consider a folded cube $\Ga_N\cong\Box_n$. Its order gives $2^{n-1}=|L|=2^{3\ell}$, so $n-1=3\ell$. For $n\geqslant 5$, Example \ref{ex:folded-cube} gives $|\Aut(\Box_n)|=2^{n-1}n!$. The inclusion $\overline{G}\leqslant\Aut(\Box_n)$ and $7^\ell\mid |\overline{G}|$ imply $7^\ell\mid n!$. However, $v_7(n!)=\sum_{i\geqslant1}\lfloor n/7^i\rfloor<n/6=(3\ell+1)/6<\ell$, a contradiction. If $n=4$, then $|\Aut(\Box_4)|=2^7\cdot3^2$ by Example \ref{ex:folded-cube}, whereas $7\mid |\overline{G}|$. The value $n=3$ is incompatible with $n-1=3\ell$. Thus no folded cube occurs.

Now let $\Ga_N\cong P_m(a)$. Example \ref{ex:projective-space-graph} gives $m^a=3\ell$ and $\overline{X}/L\leqslant \PGammaL_m(2^a)$. Note that $\overline{G}=\overline{G}/(\overline{G}\cap L)\cong \overline{G}L/L\leqslant \overline{X}/L$ as $\overline{G}\cap L=1$. 
Moreover, $\PGammaL_m(2^a)/\PGL_m(2^a)$ is cyclic, while $\overline{G}\cong T^\ell$ is perfect. Hence $\overline{G}\lesssim\PGL_m(2^a)$. Since $v_2(|T|)=v_2(|\PSL_2(7)|)=3$, we obtain $3\ell\leqslant v_2(|\PGL_m(2^a)|)=am(m-1)/2$. Together with $m^a=3\ell$, this gives $m^{a-1}\leqslant a(m-1)/2$. 
If $a\geqslant 2$, then $m\geqslant 3$ and $m^{a-1}=(1+(m-1))^{a-1}\geqslant 1+(a-1)(m-1)>a(m-1)/2$, a contradiction. Hence $a=1$. Example \ref{ex:projective-space-graph} now gives $P_m(1)\cong \K_{2^m}$, and $m=3\ell$, so $\Ga_N\cong \K_{2^{3\ell}}$. Finally, Example \ref{ex:golay} gives $|V(\Ga(C_{23}))|=2^{11}$ and $|V(\Ga(C_{22}))|=2^{10}$, whereas $|V(\Ga_N)|=|L|=2^{3\ell}$. Neither $3\ell=11$ nor $3\ell=10$ is possible. Thus the two code graphs are excluded, and the result follows. \qed

\vspace{0.5em}

\begin{lemma}\label{lem:AS2}
Suppose that $\overline{X}$ is of almost simple type. Then either $\Ga_N$ is complete, or $\overline{G}=T\leqslant \soc(\overline{X})$ and one of the following holds:
\begin{enumerate}[label=(\roman*),leftmargin=2em]
\item $\overline{G}=\soc(\overline{X})$;
\item one of the possibilities in Tables \ref{tab:AS1}, \ref{tab:AS2}, \ref{tab:AS3}, \ref{tab:AS4} and \ref{tab:AS5} holds;
\item $\Ga_N$ is $(\soc(\overline{X}),2)$-arc-transitive.
\end{enumerate}
\end{lemma}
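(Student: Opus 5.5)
The plan is to start from Lemma \ref{lem:framework} (2)(b) and Lemma \ref{lem:normal-quotient}. Write $L=\soc(\overline{X})\cong Q$, which is nonabelian simple. Lemma \ref{lem:framework} (2)(b) gives $\overline{G}\leqslant L$ with $\overline{G}$ transitive on $V(\Ga_N)$, and if $\ell\geqslant 2$ then $\Ga_N$ is complete. So I assume from now on that $\Ga_N$ is not complete. Then $\ell=1$, so $\overline{G}\cong T$ is a transitive simple subgroup of $L$. If $\overline{G}=L$, conclusion (i) holds, so I also assume $\overline{G}<L$.

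The key observation is that $\overline{X}$ acts transitively on the 2-arcs of $\Ga_N$ (Proposition \ref{prop:NQ}, via Lemma \ref{lem:normal-quotient}). Hence the stabilizer $\overline{X}_{\overline{\a}}$ is 2-transitive on $\Ga_N(\overline{\a})$. Since $L\unlhd\overline{X}$, the group $L_{\overline{\a}}$ is normal in $\overline{X}_{\overline{\a}}$. I will split into two cases.

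\emph{Case 1: $L_{\overline{\a}}$ is transitive on $\Ga_N(\overline{\a})$.} Then $\Ga_N$ is $L$-arc-transitive. I will try to show that in fact $L_{\overline{\a}}$ is $2$-transitive on $\Ga_N(\overline{\a})$, giving (iii). Otherwise the pair lands in one of the tables. For this step I would use the structure of normal subgroups of $2$-transitive groups: a transitive normal subgroup of a 2-transitive group is either itself 2-transitive, or an affine/almost-simple exception with small index.

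\emph{Case 2: $L_{\overline{\a}}$ is intransitive on $\Ga_N(\overline{\a})$.} Then, being normal in a $2$-transitive group, $L_{\overline{\a}}$ acts trivially on $\Ga_N(\overline{\a})$. By connectivity and a standard argument, $L_{\overline{\a}}=1$. In that situation $L$ is regular and $\overline{G}<L$ is transitive, which is impossible. So Case 2 does not occur.

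The main work is to list the triples $(L,\overline{G},L_{\overline{\a}})$. The defining condition is the factorization $L=\overline{G}L_{\overline{\a}}$ with $\overline{G}\cong T$ simple, $\overline{G}<L$, and $\Ga_N$ non-complete. For this I will invoke the classification of factorizations of almost simple groups with a nonsolvable factor. The main sources are \cite{LPS2000} via the maximal factorizations, \cite{BL2021} for solvable $L_{\overline{\a}}$, and \cite{LWX2023} for nonsolvable $L_{\overline{\a}}$. I will combine these with the requirement that $\overline{X}_{\overline{\a}}$ induce a $2$-transitive group of degree equal to the valency, and that $L_{\overline{\a}}$ be a core-free subgroup admitting such an arc-stabilizer structure.

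Concretely, I will sort by the family of $L$: alternating, classical, exceptional, and sporadic. For each family I will read off the admissible pairs $(L,\overline{G})$ and record the surviving cases in Tables \ref{tab:AS1}--\ref{tab:AS5}. The alternating case $L=\A_n$ in natural action gives only complete graphs and is excluded. I expect the classical case to be the main obstacle, since the factorization lists are long and deciding 2-arc-transitivity requires checking the local action in each case. There I would first use orders and primitive prime divisors (Proposition \ref{prop:ppd}) to prune, and only then inspect the point-stabilizer actions.
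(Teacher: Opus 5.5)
Your reduction steps are sound. Lemma~\ref{lem:framework}(2)(b) gives completeness when $\ell\geqslant 2$. Your argument that $L_{\overline{\a}}$ is transitive on $\Ga_N(\overline{\a})$ is also correct: a normal subgroup of the primitive group $\overline{X}_{\overline{\a}}^{\Ga_N(\overline{\a})}$ is transitive or trivial, and triviality plus connectivity forces $L_{\overline{\a}}=1$, so $L$ would be regular, contradicting $\overline{G}<L$ transitive. This is a valid elementary substitute for the paper's appeal to \cite{L2025}.

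The gap is in the step carrying all the content, ``otherwise the pair lands in one of the tables''. The structural fact you plan to use is not true in general. The translation subgroup of $\AGL_1(q)$ is a transitive, non-2-transitive normal subgroup of a 2-transitive group, of index $q-1$. So nothing intrinsic to 2-transitive groups makes the index small.

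What is missing is this observation. Put $Y=\overline{X}_{\overline{\a}}^{\Ga_N(\overline{\a})}$ and $H=L_{\overline{\a}}^{\Ga_N(\overline{\a})}$. Then $Y/H$ is a quotient of $\overline{X}_{\overline{\a}}/L_{\overline{\a}}$, whose order is $|\overline{X}:L|$, so $|Y:H|$ divides $|\Out(L)|$. The paper uses exactly this when $L_{\overline{\a}}$ is nonsolvable. There the only non-2-transitive local actions are two:
\begin{enumerate}
\item $H\cong\PSL_2(8)$ of degree $28$, with $|Y:H|=3$;
\item $\ZZ_p^2:\SL_2(5)\unlhd H$ with $p\in\{19,29,59\}$, where $H_0$ has $s=(p^2-1)/120\in\{3,7,29\}$ orbits and $s\mid |Y:H|$.
\end{enumerate}
Hence $3$ or $s$ must divide $|\Out(L)|$. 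This kills alternating and sporadic $L$ at once. Together with the composition-factor restrictions on $L_{\overline{\a}}$ and the factorization tables, it also kills the Lie type cases.

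Factorization data alone cannot do this. For $L=\A_m$ and $\overline{G}=\A_{m-1}$, every transitive $L_{\overline{\a}}\leqslant\A_m$ gives $L=\overline{G}L_{\overline{\a}}$. Tables \ref{tab:AS2}--\ref{tab:AS5} contain only solvable $L_{\overline{\a}}$, so a nonsolvable, non-2-transitive case has nowhere to go unless it is ruled out.

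A second gap: you never separate the case where $\overline{G}$ is regular on $V(\Ga_N)$, i.e.\ $N\leqslant G$ and $\overline{G}\cap L_{\overline{\a}}=1$. Table \ref{tab:AS1} comes exactly from this case, via the classification of 2-arc-transitive Cayley graphs of simple groups in \cite[Theorem 1.8]{LWX2022}. Tables \ref{tab:AS2}--\ref{tab:AS5} are built under $\overline{G}\cap L_{\overline{\a}}\neq 1$. The paper repeatedly discards candidates precisely because this intersection is trivial, e.g.\ $(\M_{23},\M_{22},\ZZ_{23})$ and $(\PSL_2(59),\A_5,59:29)$. Without this split your enumeration has no route to Table \ref{tab:AS1}.

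Finally, the paper's organizing dichotomy is solvable versus nonsolvable $L_{\overline{\a}}$, not 2-transitive versus not. In the solvable case the tables come from factorizations with a solvable factor, plus the prime-valency result of \cite{DLY2026}, regardless of the local action. In the nonsolvable case one proves (iii). Your ``not 2-transitive'' branch would have to be split by solvability in the same way before the factorization classifications you cite can be applied.
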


\pf\, Suppose that $\overline{X}$ is of almost simple type, and let $L=\soc(\overline{X})$. Recall that $\Ga$ is connected $(X,2)$-arc-transitive. Hence $\Ga_N$ is connected $(\overline{X},2)$-arc-transitive, and so $\overline{X}_{\overline{\a}}^{\Ga_N(\overline{\a})}$ is 2-transitive on $\Ga_N(\overline{\a})$. By \cite[Theorem 3.21]{PS2018}, $\soc(\overline{X}_{\overline{\a}}^{\Ga_N(\overline{\a})})$ is either a regular elementary abelian $p$-group, or a nonregular nonabelian simple group.
Lemma \ref{lem:framework} (2)(b) shows that $\Ga_N$ is complete whenever $\ell\geqslant 2$. We may therefore assume that $\Ga_N$ is not complete. Then $\ell=1$ and so $\overline{G}=T$. Since $\overline{X}$ is quasiprimitive on $V(\Ga_N)$ with nonabelian socle $L$, and $\overline{G}=T$ is transitive on $V(\Ga_N)$, Proposition \ref{subgroup} gives $T\leqslant L$. If $T=L$, then (i) holds. Hence only the case $T<L$ remains.

Assume first that $N\leqslant G$.  Then Lemma \ref{lem:normal-quotient} gives that $\overline{G}=T$ is regular on $V(\Ga_N)$. Thus $\Ga_N$ is a connected $(\overline{X},2)$-arc-transitive Cayley graph of the nonabelian simple group $T$. By \cite[Theorem 1.8]{LWX2022} and its proof, one of the following holds: $\Ga_N$ is complete, $\Ga_N$ is $\overline{X}$-normal, or $(L,T,L_{\overline{\a}})$ occurs in Table \ref{tab:AS1}. The first case has already been excluded. If $\Ga_N$ is $\overline{X}$-normal, then $T\unlhd\overline{X}$. Since $\overline{X}$ is almost simple with unique minimal normal subgroup $L$ and $1<T\leqslant L$, we obtain that $L\leqslant T\leqslant L$, and so $T=L$, contrary to $T<L$. Hence $(L,T,L_{\overline{\a}})$ is listed in Table \ref{tab:AS1} when $N\leqslant G$.

\begin{table}[htbp]
\centering
\setlength{\tabcolsep}{4pt}
\renewcommand{\arraystretch}{1.3}
\caption{The triples $(L,T,L_{\overline{\a}})$ arising from the regular action of $T$.}
\label{tab:AS1}
{\scriptsize
\begin{tabular}{|c|c|c|c|c|c|}
\hline
$L$ & $T$ & $L_{\overline{\a}}$ &          $L$ & $T$ & $L_{\overline{\a}}$\\
\hline
$\A_q$  & $\A_{q-2}$ & $\leqslant \AGammaL_1(q)$, $q$ a prime power          &$\A_{25}$   & $\A_{23}$ &$\ZZ_5^2:\SL_2(3)$\\
\hline
$\A_{49}$ & $\A_{47}$ & $\ZZ_7^2:\Q_8.\Sy_3$  & $\A_{121}$        & $\A_{119}$ & $\ZZ_{11}^2:\SL_2(3).\ZZ_5,\ \ZZ_{11}^2:\SL_2(5)$\\
\hline
$\A_{529}$   & $\A_{527}$ & $\ZZ_{23}^2:\SL_2(3).\ZZ_{22}$          & $\A_{841}$   & $\A_{839}$ & $\ZZ_{29}^2:(\SL_2(5)\times\ZZ_7)$\\
\hline
$\A_{3481}$   & $\A_{3479}$ & $\ZZ_{59}^2:(\SL_2(5)\times\ZZ_{29})$          & $\Sp_{2m}(q)$ & $\Omega^-_{2m}(q)$ & $\leqslant\E_q^m:\GammaL_1(q^m)$, $q$ a power of 2\\
\hline
$\M_{12}$ & $\M_{11}$  & $\A_4$             & $\A_n$       & $\A_{n-1}$ & $[n]$\\
\hline
\end{tabular}}
\end{table}
From now on assume that $N\nleqslant G$. Lemma \ref{lem:normal-quotient} shows that $T$ is not regular on $V(\Ga_N)$. Thus $T_{\overline{\a}}\neq 1$, and the inclusion $T<L$ gives $L_{\overline{\a}}\neq 1$. Recall that $L=\soc(\overline{X})\unlhd\overline{X}$. \cite[Lemmas 4.1 and 4.2]{L2025} gives that $L_{\overline{\a}}$ is transitive on $\Ga_N(\overline{\a})$ and $\soc(L_{\overline{\a}}^{\Ga_N(\overline{\a})})=\soc(\overline{X}_{\overline{\a}}^{\Ga_N(\overline{\a})})$. 
Note that $L$ is transitive on $V(\Ga_N)$. Thus $\Ga_N$ is also $L$-arc-transitive.

We next consider the solvable and nonsolvable cases for $L_{\overline{\a}}$ separately.

\smallskip
\noindent{\textbf{Claim 1.} If $L_{\overline{\a}}$ is solvable, then $(L,T,L_{\overline{\a}})$ are listed in Tables \ref{tab:AS2}, \ref{tab:AS3}, \ref{tab:AS4} and \ref{tab:AS5}.}

Recall that $\soc(\overline{X}_{\overline{\a}}^{\Ga_N(\overline{\a})})$ is either a regular elementary abelian $p$-group, or a nonregular nonabelian simple group. 
The latter case is impossible because $L_{\overline{\a}}$ is solvable and $\soc(L_{\overline{\a}}^{\Ga_N(\overline{\a})})=\soc(\overline{X}_{\overline{\a}}^{\Ga_N(\overline{\a})})$. Hence $|\Ga_N(\overline{\a})|=p^e$ and $\soc(L_{\overline{\a}}^{\Ga_N(\overline{\a})})\cong\ZZ_p^e$ for some prime $p$ and integer $e\geqslant 1$.

Suppose first that $e=1$. Then $\Ga_N$ has prime valency. By the preceding discussion, $\Ga_N$ is connected $L$-arc-transitive, $\overline{G}=T<L=\soc(\overline{X})$ is transitive on $V(\Ga_N)$, $\Ga_N$ is not complete, and $L_{\overline{\a}}$ is solvable. Hence the hypotheses of \cite[Lemma 4.3]{DLY2026} are satisfied, and $(L,T,L_{\overline{\a}})$ occurs in rows 1--3 of \cite[Table 5]{DLY2026}.
Row 2 has $L\cong\M_{23}$, $T\cong \M_{22}$ and $L_{\overline{\a}}\cong\ZZ_{23}$, so $|L:T|=|L_{\overline{\a}}|=23$, forcing $T\cap L_{\overline{\a}}=1$. For row 3, $L\cong \PSL_2(p)$, $T\cong \A_5$ and $L_{\overline{\a}}\leqslant \ZZ_p:\ZZ_{(p-1)/2}$, where $p\in\{11,19,29\}$. The factorization $L=TL_{\overline{\a}}$ requires $|L:T|=p(p^2-1)/120$ to divide $|L_{\overline{\a}}|$. In particular, $L_{\overline{\a}}=\ZZ_p:\ZZ_a$ with $a\mid (p-1)/2$ and $(p^2-1)/120\mid a$. 
The only possibilities are $a=(p^2-1)/120$ and $a=(p-1)/2$. The first gives $T\cap L_{\overline{\a}}=1$, while the second gives $|V(\Ga_N)|=|L:L_{\overline{\a}}|=p+1$. Since the valency is $p$, the latter forces $\Ga_N\cong \K_{p+1}$. Thus neither row occurs in the present nonregular, noncomplete case. Thus $L\cong \A_m$, $T\cong \A_{m-1}$ and $L_{\overline{\a}}$ is transitive of degree $m$, where $m\geqslant 6$. 
The prime-valent local action gives $v_p(|L_{\overline{\a}}|)=1$ by \cite[Lemma 2.6]{DLY2026}. Since $m=|L:T|$ divides $|L_{\overline{\a}}|$, it follows that $v_p(m)\leqslant 1$. Moreover, $|L_{\overline{\a}}|>m$ because $T\cap L_{\overline{\a}}\neq 1$. This gives the first row of Table \ref{tab:AS2}.

Assume next that $e\geqslant 2$. We consider the possibilities for $L$ given by the classification of finite nonabelian simple groups. The solvability of $L_{\overline{\a}}$, together with \cite[Proposition 4.2]{LX2022}, excludes exceptional simple groups of Lie type. 
We first consider the case where $L\cong \A_m$. By \cite[Proposition 4.3 (a)]{LX2022}, we have $T=\A_{m-1}$ and $L_{\overline{\a}}$ is transitive of degree $m$. Since $L=TL_{\overline{\a}}$, we have $|L:T|=|L_{\overline{\a}}:T\cap L_{\overline{\a}}|=m$. The intersection $T\cap L_{\overline{\a}}$ is nontrivial, and so $|L_{\overline{\a}}|> m\geqslant 6$. This gives the first row of Table \ref{tab:AS2}.

By \cite[Proposition 4.3 (b)]{LX2022}, another possibility is $T=\A_{m-2}$ and $m=s^f$ for some prime $s$, where $L_{\overline{\a}}$ is 2-homogeneous of degree $m$. Moreover, either $L_{\overline{\a}}\leqslant \AGammaL_1(s^f)$ or $L_{\overline{\a}}$ is 2-homogeneous of degree $m$ and $(L_{\overline{\a}},m)$ occurs in the exceptional list given there. Since $|L:T|=m(m-1)=|L_{\overline{\a}}:T\cap L_{\overline{\a}}|$ and $T\cap L_{\overline{\a}}\neq 1$, we have $|L_{\overline{\a}}|>m(m-1)$. The case $L_{\overline{\a}}\leqslant \AGammaL_1(s^f)$ gives the second row of Table \ref{tab:AS2}.

It remains to consider the exceptional groups in \cite[Proposition 4.3 (b)]{LX2022}. For $m=25$, the subgroup $\ZZ_5^2:\SL_2(3)$ has order $25\cdot24=m(m-1)$ and is excluded. Thus $L_{\overline{\a}}\cong \ZZ_5^2:\Q_8.\ZZ_6$ or $\ZZ_5^2:\SL_2(3).\ZZ_4$. Let $V\cong \ZZ_5^2$ be the normal elementary abelian subgroup of $L_{\overline{\a}}$. Since $L_{\overline{\a}}$ is 2-homogeneous of degree 25, it is primitive, and every nontrivial normal subgroup of $L_{\overline{\a}}$ is transitive. Hence $V$ is regular and a minimal normal subgroup of $L_{\overline{\a}}$. Moreover, $V$ is the unique Sylow 5-subgroup of $L_{\overline{\a}}$.
Note that $\Ga_N$ is $L$-arc-transitive. Then there exists $g\in L$ such that $\overline{\b}=\overline{\a}^g$ for some $\overline{\b}\in \Ga_N(\overline{\a})$ and $|\Ga_N(\overline{\a})|=|L_{\overline{\a}}:L_{\overline{\a}}\cap L_{\overline{\a}}^g|=p^e$. The connectivity of $\Ga_N$ gives $L=\l L_{\overline{\a}}, g\r$. 
If $p\neq 5$, then $|L_{\overline{\a}}:L_{\overline{\a}}\cap L_{\overline{\a}}^g|=p^e$ is coprime to 5. Hence $L_{\overline{\a}}\cap L_{\overline{\a}}^g$ contains a Sylow 5-subgroup of $L_{\overline{\a}}$, which must be $V$ by uniqueness. Thus $V\leqslant L_{\overline{\a}}^g$ and so $V$ is also a Sylow 5-subgroup of $L_{\overline{\a}}^g$. The unique Sylow 5-subgroup of $L_{\overline{\a}}^g$ is $V^g$, and so $V=V^g$. Thus $g\in N_L(V)$ and $L_{\overline{\a}}\leqslant N_L(V)$, which gives $L=\l L_{\overline{\a}}, g\r\leqslant N_L(V)<L$, a contradiction. Hence $p=5$.
Since $V\cap L_{\overline{\a}}^{[1]}\unlhd L_{\overline{\a}}$ and $V$ is a minimal normal subgroup of $L_{\overline{\a}}$, either $V\cap L_{\overline{\a}}^{[1]}=1$ or $V\leqslant L_{\overline{\a}}^{[1]}$. The latter is impossible because $L_{\overline{\a}}/V$ is a 5'-group, whereas $\soc(L_{\overline{\a}}/L_{\overline{\a}}^{[1]})\cong \soc(L_{\overline{\a}}^{\Ga_N(\overline{\a})})\cong \ZZ_5^e$ and $e\geqslant 2$. 
Hence $V\cap L_{\overline{\a}}^{[1]}=1$. Therefore $VL_{\overline{\a}}^{[1]}/L_{\overline{\a}}^{[1]}\cong V$ is a nontrivial minimal normal subgroup of $L_{\overline{\a}}/L_{\overline{\a}}^{[1]}\cong L_{\overline{\a}}^{\Ga_N(\overline{\a})}$, and so $VL_{\overline{\a}}^{[1]}/L_{\overline{\a}}^{[1]}=\soc(L_{\overline{\a}}/L_{\overline{\a}}^{[1]})\cong \soc(L_{\overline{\a}}^{\Ga_N(\overline{\a})})\cong \ZZ_5^e$. Thus $e=2$, and $\Ga_N$ has valency 25. This gives the third row of Table \ref{tab:AS2}.

The remaining exceptional cases are similar. For $m=49$, the subgroup $\ZZ_7^2:\Q_8.\Sy_3$ has order $49\cdot48=m(m-1)$ and is excluded, while $\ZZ_7^2:\SL_2(3).\ZZ_6$ gives valency 49. For $m=81$, all three groups in \cite[Proposition 4.3 (b)]{LX2022} have order greater than $81\cdot80$, and the same argument with $V\cong\ZZ_3^4$ gives valency 81. For $m=121$, the subgroup $\ZZ_{11}^2:\SL_2(3).\ZZ_5$ has order $121\cdot120=m(m-1)$ and is excluded, while $\ZZ_{11}^2:\SL_2(3).\ZZ_{10}$ gives valency 121. The unique candidate for $m=529$ has order $529\cdot528=m(m-1)$ and is excluded. These give rows 4--6 of Table \ref{tab:AS2}.

\begin{table}[htbp]
\centering
\setlength{\tabcolsep}{4pt}
\renewcommand{\arraystretch}{1.3}
\caption{Alternating possibilities with $L_{\overline{\a}}$ solvable.}
\label{tab:AS2}
{\scriptsize
\begin{tabular}{|c|c|>{\centering\arraybackslash}p{3.8cm}|>{\centering\arraybackslash}p{5.0cm}|}
\hline
$L$ & $T$ & $L_{\overline{\a}}$ & remark\\
\hline
$\A_m$ & $\A_{m-1}$ & transitive of degree $m$ & $|L_{\overline{\a}}|>m\geqslant 6$ \\
\hline
$\A_m$ & $\A_{m-2}$ & $\leqslant\AGammaL_1(s^f)$ & $m=s^f\geqslant 7$ for some prime $s$,  $|L_{\overline{\a}}|>m(m-1)$\\
\hline
$\A_{25}$ & $\A_{23}$ & $\ZZ_5^2:\Q_8.\ZZ_6$, $\ZZ_5^2:\SL_2(3).\ZZ_4$ & $|\Ga_N(\overline{\a})|=5^2$\\
\hline
$\A_{81}$ & $\A_{79}$ & $\ZZ_3^4:2^{1+4}.\ZZ_5$, $\ZZ_3^4:2^{1+4}.\D_{10}$, $\ZZ_3^4:2^{1+4}.\AGL_1(5)$ & $|\Ga_N(\overline{\a})|=3^4$\\
\hline
$\A_{121}$ & $\A_{119}$ & $\ZZ_{11}^2:\SL_2(3).\ZZ_{10}$ & $|\Ga_N(\overline{\a})|=11^2$\\
\hline
$\A_{49}$ & $\A_{47}$ & $\ZZ_7^2:\SL_2(3).\ZZ_6$ & $|\Ga_N(\overline{\a})|=49$\\
\hline
\end{tabular}}
\end{table}

\cite[Proposition 4.3 (c)]{LX2022} gives $m\in\{8,32\}$ and $T=\A_{m-3}$. If $m=8$, then $L_{\overline{\a}}\cong \AGL_1(8)$ or $\AGammaL_1(8)$, and so $|L_{\overline{\a}}|\leqslant168< 8\cdot7\cdot6=|L:T|$, contrary to $|L:T|\mid |L_{\overline{\a}}|$. 
For $m=32$, we have $L_{\overline{\a}}\cong \AGammaL_1(32)$ and $|L_{\overline{\a}}|= 32\cdot31\cdot5< 32\cdot31\cdot30= |L:T|$, which gives the same contradiction. Thus this case does not occur.

Consider the case in \cite[Proposition 4.3(d)]{LX2022}, that is, $L\cong\A_6$, $T\cong\A_5$ and $L_{\overline{\a}}\leqslant (\Sy_4\times\Sy_2)\cap \A_6\cong \Sy_4$. 
Since $L=TL_{\overline{\a}}$, we have $|L_{\overline{\a}}:T\cap L_{\overline{\a}}|=|L:T|=6$. Together with $T\cap L_{\overline{\a}}\neq 1$ and $|L_{\overline{\a}}|\mid 24$, it follows that $|L_{\overline{\a}}|\in \{12, 24\}$.
First let $|L_{\overline{\a}}|=24$. Then $L_{\overline{\a}}\cong \Sy_4$ and $L_{\overline{\a}}=(\Sy_4\times \Sy_2)\cap \A_6$. Thus $L_{\overline{\a}}$ is the stabilizer of a 2-subset in the natural action of $\A_6$ on $\{1,\ldots,6\}$, and the action of $L$ on the cosets of $L_{\overline{\a}}$ is permutation equivalent to the action of $\A_6$ on the 15 two-subsets of $\{1,\ldots,6\}$. 
Fix the 2-subset $\{5,6\}$. Among the remaining 2-subsets, six are contained in $\{1,2,3,4\}$ and eight contain exactly one point of $\{5,6\}$. Hence the $L_{\overline{\a}}$-orbits have lengths 1, 6 and 8.
Since $L_{\overline{\a}}$ is transitive on $\Ga_N(\overline{\a})$, the set $\Ga_N(\overline{\a})$ is an $L_{\overline{\a}}$-orbit. As $|\Ga_N(\overline{\a})|=p^e$ with $e\geqslant 2$, we have $|\Ga_N(\overline{\a})|=8$. Let $\overline{\b}\in\Ga_N(\overline{\a})$. Then $|L_{\overline{\a}\overline{\b}}|=|L_{\overline{\a}}|/8=3$, and $L_{\overline{\a}}^{[1]}\unlhd L_{\overline{\a}}$ with $L_{\overline{\a}}^{[1]}\leqslant L_{\overline{\a}\overline{\b}}$. 
Since $L_{\overline{\a}}\cong\Sy_4$ has no nontrivial normal subgroup of order dividing 3, we obtain $L_{\overline{\a}}^{[1]}=1$. Hence $L_{\overline{\a}}^{\Ga_N(\overline{\a})}\cong\Sy_4$, whose socle is $\ZZ_2^2$. On the other hand, $L_{\overline{\a}}^{\Ga_N(\overline{\a})}$ is a solvable 2-transitive group of degree 8, so its socle is regular elementary abelian of order 8, isomorphic to $\ZZ_2^3$, a contradiction.

Now let $|L_{\overline{\a}}|=12$. Since $L_{\overline{\a}}\leqslant(\Sy_4\times\Sy_2)\cap\A_6\cong\Sy_4$, the subgroup $L_{\overline{\a}}$ has index 2 in $(\Sy_4\times\Sy_2)\cap\A_6$, and so $L_{\overline{\a}}\cong\A_4$. 
Under the natural action of $\A_6$ on $\{1,\ldots,6\}$, this subgroup fixes 5 and 6 pointwise. Thus the action of $L$ on the cosets of $L_{\overline{\a}}$ is permutation equivalent to the action of $\A_6$ on the ordered pairs $(i,j)$ with $i\neq j$, and we may identify $\overline{\a}$ with $(5,6)$. 
Let $I=\{1,2,3,4\}$ and set $\Delta_1=\{(5,i)\mid i\in I\}$, $\Delta_2=\{(i,6)\mid i\in I\}$, $\Delta_3=\{(6,i)\mid i\in I\}$ and $\Delta_4=\{(i,5)\mid i\in I\}$. The $L_{\overline{\a}}$-orbits have lengths 1, 1, 4, 4, 4, 4, 12, where $\Delta_1,\Delta_2,\Delta_3,\Delta_4$ are the four orbits of length 4.

We first exclude $\Delta_3$ and $\Delta_4$. Assume that $\Ga_N(\overline{\a})=\Delta_3$ and take $\overline{\b}=(6,i)\in \Delta_3$. The graph $\Ga_N$ is undirected, so $\overline{\a}$ is adjacent to $\overline{\b}$. Let $x\in L$ satisfy $\overline{\b}^x=\overline{\a}$. Then $6^x=5$ and $i^x=6$, whereas $\overline{\a}^x=(5^x,5)\in\Delta_4$. The element $x$ preserves adjacency, so $\overline{\a}^x$ is adjacent to $\overline{\a}$. Hence $\overline{\a}^x\in\Ga_N(\overline{\a})=\Delta_3$, a contradiction. Thus $\Ga_N(\overline{\a})\neq\Delta_3$. The same argument excludes $\Delta_4$.

Suppose that $\Ga_N(\overline{\a})=\Delta_1$. Then every vertex $(a,b)$ is adjacent precisely to the four vertices $(a,c)$ with $c\notin\{a,b\}$. Hence, for each $a\in\{1,\ldots,6\}$, the five vertices $(a,b)$ with $b\neq a$ induce a copy of $\K_5$, and there are no edges between two such sets with different first coordinates. Thus $\Ga_N\cong 6\K_5$, contrary to the assumption that $\Ga_N$ is connected. Similarly, $\Ga_N(\overline{\a})=\Delta_2$ gives $\Ga_N\cong 6\K_5$, with the six components determined by the second coordinate. Then $|\Ga_N(\overline{\a})|\neq 4$. The only remaining orbit length greater than 1 is 12, which is not a prime power. Hence the case in \cite[Proposition 4.3(d)]{LX2022} does not occur.

Next consider the case in \cite[Proposition 4.3(e)]{LX2022}, that is, $L\cong \A_6$, $T\cong \A_5$ and $L_{\overline{\a}}\leqslant M=(\Sy_3\wr\Sy_2)\cap \A_6\cong \ZZ_3^2:\ZZ_4$, where $|M|=36$. The factorization $L=TL_{\overline{\a}}$ gives $|L_{\overline{\a}}:T\cap L_{\overline{\a}}|=|L:T|=6$, and so $|L_{\overline{\a}}|=6|T\cap L_{\overline{\a}}|$. 
The intersection $T\cap L_{\overline{\a}}$ is nontrivial and $|L_{\overline{\a}}|$ divides 36. It follows that $|T\cap L_{\overline{\a}}|\in\{2,3,6\}$ and $|L_{\overline{\a}}|\in\{12,18,36\}$.
The Magma computation in Appendix \ref{app:A6-wreath} shows that $M$ has no subgroup of order 12, and that $|T\cap H|=6$ for every subgroup $H\leqslant M$ of order 18 and every subgroup $T\cong\A_5$ of $L$. Thus $|L_{\overline{\a}}|=36$, so $L_{\overline{\a}}=M$. The same computation shows that $L$ acts 2-transitively on $[L:M]$, with subdegrees 1 and 9. Hence $\Ga_N\cong\K_{10}$, contrary to the assumption that $\Ga_N$ is not complete. Thus \cite[Proposition 4.3(e)]{LX2022} is excluded.

For \cite[Proposition 4.3 (f)]{LX2022} and \cite[Table 4.2]{LX2022}, no row has both $T$ and $L$ nonabelian simple. Hence no further alternating case arises, and Table \ref{tab:AS2} is exhaustive.

Suppose next that $L$ is a sporadic simple group. \cite[Proposition 4.4 (a)]{LX2022}, applied to the factorization $L=L_{\overline{\a}}T$, gives $(L,T)=(\M_{12},\M_{11})$, with $L_{\overline{\a}}$ transitive of degree 12. The nontrivial intersection $T\cap L_{\overline{\a}}$ gives $|L_{\overline{\a}}|>|L:T|=12$, producing the first row of Table \ref{tab:AS3}. \cite[Proposition 4.4 (b)]{LX2022} gives $(L,T)=(\M_{24},\M_{23})$, where $L_{\overline{\a}}$ is transitive of degree $24$. Again $T\cap L_{\overline{\a}}\neq1$, and so $|L_{\overline{\a}}|>24$. This gives the second row of Table \ref{tab:AS3}.
For the case in \cite[Proposition 4.4 (c)]{LX2022}, both $L$ and $T$ are nonabelian simple. Hence rows 1, 4, 5, 8 and 10--13 of \cite[Table 4.3]{LX2022} are excluded because either $L$ or $T$ is not simple. It remains to consider the other rows of that table.
Row 2 of \cite[Table 4.3]{LX2022} gives $(L,T)=(\M_{11},\PSL_2(11))$, with $L_{\overline{\a}}\cong\M_9$, $\M_9.\ZZ_2$ or $\AGL_1(9)$. These are recorded in the third row of Table \ref{tab:AS3}. 
Row 3 of \cite[Table 4.3]{LX2022} gives $(L,T)=(\M_{12},\PSL_2(11))$, with $L_{\overline{\a}}\cong\M_9.\ZZ_2$ or $\M_9.\Sy_3$. The equality $|\M_{12}:\PSL_2(11)|=144=|\M_9.\ZZ_2|$ would force $T\cap L_{\overline{\a}}=1$, contrary to the present assumption. Hence only $L_{\overline{\a}}\cong\M_9.\Sy_3$ remains, giving the row 4 of Table \ref{tab:AS3}.
Rows 6 and 7 of \cite[Table 4.3]{LX2022} give $(L,T)=(\M_{23},\M_{22})$. Row 6 of \cite[Table 4.3]{LX2022} has $|L_{\overline{\a}}|=|L:T|=23$, again forcing $T\cap L_{\overline{\a}}=1$, so this row is excluded. 
In row 7 of \cite[Table 4.3]{LX2022}, $L_{\overline{\a}}\cong \ZZ_{23}:\ZZ_{11}$. The local socle has order $p^e$ with $e\geqslant 2$, and so $p^e\mid |L_{\overline{\a}}|=23\cdot11$, which is impossible. 
Row 9 of \cite[Table 4.3]{LX2022} gives $(L,T)=(\HS,\M_{22})$ with $L_{\overline{\a}}\cong5_+^{1+2}:\ZZ_8:\ZZ_2$, producing the fifth row of Table \ref{tab:AS3}. Thus the sporadic candidates are exactly those listed in Table \ref{tab:AS3}.

\begin{table}[htbp]
\centering
\setlength{\tabcolsep}{10pt}
\renewcommand{\arraystretch}{1.3}
\caption{Sporadic candidates with $L_{\overline{\a}}$ solvable and $e\geqslant 2$.}
\label{tab:AS3}
{\scriptsize
\begin{tabular}{|c|c|c|c|}
\hline
$L$ & $T$ & $L_{\overline{\a}}$ & remark\\
\hline
$\M_{12}$ & $\M_{11}$ & transitive of degree 12 & $|L_{\overline{\a}}|>12$\\
\hline
$\M_{24}$ & $\M_{23}$ & transitive of degree 24 & $|L_{\overline{\a}}|>24$\\
\hline
$\M_{11}$ & $\PSL_2(11)$ & $\M_9$, $\M_9.\ZZ_2$, $\AGL_1(9)$ & \\
\hline
$\M_{12}$ & $\PSL_2(11)$ & $\M_9.\Sy_3$ & \\
\hline
$\HS$ & $\M_{22}$ & $5_+^{1+2}:\ZZ_8:\ZZ_2$ & \\
\hline
\end{tabular}}
\end{table}

Suppose finally that $L$ is a classical simple group of Lie type. By \cite[Theorem 1.1 (d)]{LX2022}, the factorization $L=L_{\overline{\a}}T$ is listed in \cite[Tables 1.1 and 1.2]{LX2022}.
We first consider \cite[Table 1.1]{LX2022}. In row 1, the group $T$ contains a normal subgroup $\E_q^{n-1}:\SL_{n-1}(q)$, contradicting the simplicity of $T$. In row 2, $L=\PSL_4(q)$, $T=\PSp_4(q)$ and $L_{\overline{\a}}\leqslant U$, where $|U|=3q^3(q^3-1)/(4,q-1)$. For odd $q$, we have $|L:T|=2q^2(q^3-1)/(4,q-1)$, so $|U|/|L:T|=3q/2$ is not integer. Thus $q$ is even. Moreover, $q\neq 2$, since $\PSp_4(2)$ is not simple. Applying $|L:T|\mid |L_{\overline{\a}}|$ to rows 3--9 gives the remaining possibilities in Table \ref{tab:AS4}.

\begin{table}[htbp]
\centering
\setlength{\tabcolsep}{10pt}
\renewcommand{\arraystretch}{1.3}
\caption{Infinite classical candidates with $L_{\overline{\a}}$ solvable and $e\geqslant 2$.}
\label{tab:AS4}
{\scriptsize
\begin{tabular}{|c|c|c|c|}
\hline
$L$ & $T$ & $L_{\overline{\a}}\leqslant$ & remark\\
\hline
$\PSL_4(q)$ & $\PSp_4(q)$ & $q^3:\dfrac{q^3-1}{(4,q-1)}.3$ & $q\geqslant 4$ even\\
\hline
$\PSp_{2m}(q)$ & $\Omega^-_{2m}(q)$ & $q^{m(m+1)/2}:(q^m-1).m$ & $m>2$, $q$ even\\
\hline
$\PSp_4(q)$ & $\Sp_2(q^2)$ & $q^3:(q^2-1).2$ & $q\geqslant 4$ even\\
\hline
$\PSp_4(q)$ & $\PSp_2(q^2)$ & $q^{1+2}:\dfrac{q^2-1}{2}.2$ & $q$ odd\\
\hline
$\PSU_{2m}(q)$ & $\SU_{2m-1}(q)$ & $q^{m^2}:\dfrac{q^{2m}-1}{(q+1)(2m,q+1)}.m$ & $m>2$, $(q+1)\mid m$\\
\hline
$\Omega_{2m+1}(q)$ & $\Omega^-_{2m}(q)$ & $(q^{m(m-1)/2}.q^m):\dfrac{q^m-1}{2}.m$ & $m>3$ even, $q$ odd\\
\hline
$\POmega^+_{2m}(q)$ & $\Omega_{2m-1}(q)$ & $q^{m(m-1)/2}:\dfrac{q^m-1}{(4,q^m-1)}.m$ & $q$ even or $m>5$ even\\
\hline
$\POmega^+_8(q)$ & $\Omega_7(q)$ & $q^6:\dfrac{q^4-1}{(4,q^4-1)}.4$ & \\
\hline
\end{tabular}}
\end{table}

We next consider \cite[Table 1.2]{LX2022}. The group $L_{\overline{\a}}^{\Ga_N(\overline{\a})}$ is a quotient of $L_{\overline{\a}}$, and $\soc(L_{\overline{\a}}^{\Ga_N(\overline{\a})})\cong \ZZ_p^e$ with $e\geqslant 2$. Hence $p^e\mid |L_{\overline{\a}}|$, which excludes rows 1, 3 and 4 of \cite[Table 1.2]{LX2022}. Moreover, the factorization $L=TL_{\overline{\a}}$ gives $|L:T|=|L_{\overline{\a}}:T\cap L_{\overline{\a}}|$, so $|L:T|\mid |L_{\overline{\a}}|$. This divisibility excludes rows 2, 6--10 and 17.
Row 5 is also impossible. Here $(L,T)=(\PSL_2(59),\A_5)$ and $|L:T|=1711=59\cdot 29$, while $L_{\overline{\a}}\leqslant 59:29$. Thus $|L_{\overline{\a}}|=1711$ and $T\cap L_{\overline{\a}}=1$, contrary to the present assumption.
In row 11, the simplicity of $T$ leaves $T\cong\A_5$ or $\A_6$. The first possibility is excluded by $|\PSp_4(3):\A_5|=432\nmid 648$, whereas $|\PSp_4(3):\A_6|=72\mid 648$. Thus row 11 contributes only $(L,T)=(\PSp_4(3),\A_6)$.
For rows 12--16 and 18--27, retaining only the possibilities for which $T$ is nonabelian simple and $|L:T|\mid |L_{\overline{\a}}|$ gives precisely the entries in Table \ref{tab:AS5}. This completes the classical case and claim 1.

\begin{table}[htbp]
\centering
\setlength{\tabcolsep}{4pt}
\renewcommand{\arraystretch}{1.3}
\caption{Finite classical possibilities with $L_{\overline{\a}}$ solvable and $e\geqslant 2$.}
\label{tab:AS5}
{\scriptsize
\begin{tabular}{|c|c|c|c|c|c|}
\hline
$L$ & $T$ & $L_{\overline{\a}}$ & $L$ & $T$ & $L_{\overline{\a}}$\\
\hline
$\PSp_4(3)$ & $\A_6$ & $3_+^{1+2}:2.\A_4$ & $\PSp_4(5)$ & $\PSL_2(25)$ & $5_+^{1+2}:4.\A_4$\\
\hline
$\PSp_4(7)$ & $\PSL_2(49)$ & $7_+^{1+2}:6.\Sy_4$ & $\PSp_4(11)$ & $\PSL_2(121)$ & $11_+^{1+2}:10.\A_4$\\
\hline
$\PSp_4(23)$ & $\PSL_2(529)$ & $23_+^{1+2}:22.\Sy_4$ & $\Sp_6(2)$ & $\A_8$ & $3_+^{1+2}:2.\Sy_4$\\
\hline
$\PSU_3(3)$ & $\PSL_2(7)$ & $3_+^{1+2}:8$ & $\PSU_3(5)$ & $\A_7$ & $5_+^{1+2}:8$\\
\hline
$\PSU_4(3)$ & $\PSL_3(4)$ & $3^4:\D_{10}$, $3^4:\Sy_4$ & $\Omega_7(3)$ & $\G_2(3)$ & $3^5:2^4.\AGL_1(5)$\\
\hline
$\PSU_4(3)$ & $\PSL_3(4)$ & $3^4:3^2:4$, $3_+^{1+4}.2.\Sy_4$ & $\Omega_7(3)$ & $\Sp_6(2)$ & $3^{3+3}:13:3$\\
\hline
$\Omega_9(3)$ & $\Omega^-_8(3)$ & $3^{6+4}:2^{1+4}.\AGL_1(5)$ & $\Omega^+_8(2)$ & $\Sp_6(2)$ & $2^2:15.4<\A_9$\\
\hline
$\Omega^+_8(2)$ & $\A_9$ & $2^6:15.4$ & $\POmega^+_8(3)$ & $\Omega_7(3)$ & $3^6:2^4.\AGL_1(5)$\\
\hline
$\POmega^+_8(3)$ & $\Omega^+_8(2)$ & $3^6:(3^3:13:3)$ & $\POmega^+_8(3)$ & $\Omega^+_8(2)$ & $3^{3+6}:13.3$\\
\hline
\end{tabular}}
\end{table}

\smallskip
\noindent{\textbf{Claim 2.} If $L_{\overline{\a}}$ is nonsolvable, then $\Ga_N$ is $(L,2)$-arc-transitive.}

Let $H=L_{\overline{\a}}^{\Ga_N(\overline{\a})}$ and $Y=\overline{X}_{\overline{\a}}^{\Ga_N(\overline{\a})}$. Since $\overline{X}$ is almost simple with socle $L$, we have $L\unlhd\overline{X}$ and $\overline{X}/L\leqslant\Out(L)$. Hence $L_{\overline{\a}}\unlhd\overline{X}_{\overline{\a}}$, and so $H\unlhd Y$. 
Moreover, $Y/H$ is a quotient of $\overline{X}_{\overline{\a}}/L_{\overline{\a}}$, giving $|Y:H|\mid |\overline{X}_{\overline{\a}}:L_{\overline{\a}}|$. Both $L$ and $\overline{X}$ are vertex-transitive on $V(\Ga_N)$, so $|L:L_{\overline{\a}}|=|V(\Ga_N)|=|\overline{X}:\overline{X}_{\overline{\a}}|$. Hence $|\overline{X}_{\overline{\a}}:L_{\overline{\a}}|=|\overline{X}:L|$, and so $|Y:H|\mid |\Out(L)|$.

Recall that $\Ga_N$ is connected and $(\overline{X},2)$-arc-transitive, while $L\unlhd\overline{X}$ and $L_{\overline{\a}}\neq1$. Applying \cite[Lemmas 4.1 and 4.2]{L2025} to $\overline{X}$ and its normal subgroup $L$, together with the nonsolvability of $L_{\overline{\a}}$, shows that $L_{\overline{\a}}$ acts transitively on $\Ga_N(\overline{\a})$ and that one of the following holds:
\begin{enumerate}[label=(\alph*),leftmargin=2em]
\item $H$ is 2-transitive on $\Ga_N(\overline{\a})$;
\item $|\Ga_N(\overline{\a})|=28$, $H\cong\PSL_2(8)$ and $Y\cong \PGammaL_2(8)$;
\item $|\Ga_N(\overline{\a})|=p^2$ and $\ZZ_p^2:\SL_2(5)\unlhd H\unlhd Y\leqslant \ZZ_p^2:(\ZZ_{p-1}.\PSL_2(5))$, where $p\in\{19,29,59\}$.
\end{enumerate}

Since $L$ is vertex-transitive on $V(\Ga_N)$ and $L_{\overline{\a}}$ is transitive on $\Ga_N(\overline{\a})$, the graph $\Ga_N$ is $L$-arc-transitive. We next assume that (a) fails, that is, $H$ is not 2-transitive on $\Ga_N(\overline{\a})$. We show in two steps that, when $L_{\overline{\a}}$ is nonsolvable, Cases (b) and (c) do not occur.

\vspace{0.5em}
\emph{Step 1. We prove that Case (b) does not occur.} Assume that (b) holds, that is, $|\Ga_N(\overline{\a})|=28$, $H\cong\PSL_2(8)$ and $Y\cong\PGammaL_2(8)$. Then $|Y:H|=3$, and so $3\mid |\Out(L)|$. 
Take $\overline{\b}\in\Ga_N(\overline{\a})$. By \cite[Corollary 2.3]{G1973}, $L_{\overline{\a}\overline{\b}}^{[1]}=L_{\overline{\a}}^{[1]}\cap L_{\overline{\b}}^{[1]}$ has prime-power order. 
Moreover, $L_{\overline{\a}}^{[1]}/L_{\overline{\a}\overline{\b}}^{[1]}\cong (L_{\overline{\a}}^{[1]})^{\Ga_N(\overline{\b})}\leqslant (L_{\overline{\b}}^{\Ga_N(\overline{\b})})_{\overline{\a}}\cong (L_{\overline{\a}}^{\Ga_N(\overline{\a})})_{\overline{\b}}=H_{\overline{\b}}$.
Since $H\cong \PSL_2(8)$ is transitive of degree 28, we have $|H_{\overline{\b}}|=|\PSL_2(8)|/28=18$, and in this action $H_{\overline{\b}}\cong \ZZ_9:\ZZ_2$. Thus $L_{\overline{\a}}^{[1]}/L_{\overline{\a}\overline{\b}}^{[1]}$ is solvable. 
The subgroup $L_{\overline{\a}\overline{\b}}^{[1]}$ has prime-power order and is also solvable, so $L_{\overline{\a}}^{[1]}$ is solvable.
Finally, $L_{\overline{\a}}/L_{\overline{\a}}^{[1]}\cong H\cong\PSL_2(8)$, and so $\PSL_2(8)$ is the unique nonabelian composition factor of $L_{\overline{\a}}^{(\infty)}$.

We first show that $|L_{\overline{\a}\overline{\b}}^{[1]}|$ is odd. Suppose otherwise that $L_{\overline{\a}\overline{\b}}^{[1]}$ is a nontrivial 2-group. Since $L_{\overline{\a}}^{[1]}\unlhd L_{\overline{\a}}$ and $L_{\overline{\a}}^{[1]}\leqslant L_{\overline{\a}\overline{\b}}$, we have $L_{\overline{\a}}^{[1]}\unlhd L_{\overline{\a}\overline{\b}}$. Thus $(L_{\overline{\a}}^{[1]})^{\Ga_N(\overline{\b})}$ is normal in $L_{\overline{\a}\overline{\b}}^{\Ga_N(\overline{\b})}\cong\ZZ_9:\ZZ_2$, and so  $L_{\overline{\a}}^{[1]}/L_{\overline{\a}\overline{\b}}^{[1]}\unlhd\ZZ_9:\ZZ_2$. Since $\O_2(\ZZ_9:\ZZ_2)=1$, we obtain that $\O_2(L_{\overline{\a}}^{[1]})\leqslant L_{\overline{\a}\overline{\b}}^{[1]}$. 
On the other hand, $L_{\overline{\a}\overline{\b}}^{[1]}$ is a normal 2-subgroup of $L_{\overline{\a}}^{[1]}$, so $L_{\overline{\a}\overline{\b}}^{[1]}\leqslant\O_2(L_{\overline{\a}}^{[1]})$. Thus $L_{\overline{\a}\overline{\b}}^{[1]}=\O_2(L_{\overline{\a}}^{[1]})$. Since $\O_2(L_{\overline{\a}}^{[1]})$ is characteristic in $L_{\overline{\a}}^{[1]}$ and $L_{\overline{\a}}^{[1]}\unlhd L_{\overline{\a}}$, we get that $L_{\overline{\a}\overline{\b}}^{[1]}\unlhd L_{\overline{\a}}$.

The subgroup $L_{\overline{\a}\overline{\b}}^{[1]}$ has odd order, and $L_{\overline{\a}}^{[1]}/L_{\overline{\a}\overline{\b}}^{[1]}\lesssim H_{\overline{\b}}\cong \ZZ_9:\ZZ_2$. Hence $v_2(|L_{\overline{\a}}^{[1]}|)\leqslant 1$. 
Combining this with $L_{\overline{\a}}/L_{\overline{\a}}^{[1]}\cong \PSL_2(8)$ and $v_2(|\PSL_2(8)|)=3$ gives $v_2(|L_{\overline{\a}}|)\leqslant 4$.

In what follows, we consider the possibilities for $L$ given by the classification of finite nonabelian simple groups. By \cite{Atlas}, $|\Out(\A_n)|=2$ for $n\geqslant 5$ and $n\neq 6$, while $|\Out(\A_6)|=4$, and $|\Out(L)|\leqslant2$ for every sporadic simple group $L$. By the preceding result, $3\mid |\Out(L)|$, which excludes alternating and sporadic groups $L$.

Suppose that $L$ is an exceptional simple group of Lie type. By \cite[Theorem 1]{HLS1987}, up to interchanging the two factors, the relevant factorizations are of the following types:
\begin{enumerate}[label=(\roman*),leftmargin=2em]
\item $L\cong\G_2(q)$ with nonsolvable factors $\SL_3(q)$ and $\SU_3(q)$, where $q=3^f$;
\item $L\cong\G_2(q)$ with nonsolvable factors ${}^2\G_2(q)$ and $\SL_3(q)$, where $q=3^{2f+1}$;
\item $L\cong\G_2(4)$ with factors $\J_2$ and $\SU_3(4)$;
\item $L\cong\F_4(q)$ with factors $\Sp_8(q)$ and ${}^3D_4(q)$, where $q=2^f$.
\end{enumerate}
Cases (i), (iii) and (iv) are excluded by the preceding conclusion that $\PSL_2(8)$ is the unique nonabelian composition factor of $L_{\overline{\a}}^{(\infty)}$. In case (ii), this can occur only for $q=3$, since ${}^2\G_2(3)'\cong\PSL_2(8)$; however, $|\Out(\G_2(3))|=2$, contradicting $3\mid |\Out(L)|$. Thus $L$ is not an exceptional simple group of Lie type.

Suppose finally that $L$ is a classical simple group of Lie type. Apply \cite[Theorem 1.4]{LWX2022} to the factorization $L=TL_{\overline{\a}}$. The simplicity of $T$ gives $T^{(\infty)}=T$, while $\PSL_2(8)$ is the unique nonabelian composition factor of $L_{\overline{\a}}^{(\infty)}$.
Consider first \cite[Table 3.1]{LWX2022}. In Row 1, the factor $\SL_2(8)$ occurs only for $(n,q,a,b)=(6,2,2,3)$, while the other nonsolvable factor is $\ZZ_2^5:\SL_5(2)$, which is not simple and hence cannot equal $T^{(\infty)}=T$. No other row contains a factor with unique nonabelian composition factor $\PSL_2(8)$. Hence \cite[Table 3.1]{LWX2022} gives no possibility, and the same holds for \cite[Table 4.1]{LWX2022}. In \cite[Table 5.1]{LWX2022}, Rows 1, 2 and 5 yield, $(L,T)=(\PSp_4(8),\PSL_2(64))$, $(\Sp_6(2),\A_8)$ and $(\Sp_{12}(2),\Omega^-_{12}(2))$, respectively, while the remaining rows give no further possibility. In \cite[Table 5.2]{LWX2022}, Row 6 contains $\SL_2(8)$, but the other nonsolvable factor is $\ZZ_2^4:\A_5$, whose unique nonabelian composition factor is $\A_5$. Thus \cite[Table 5.2]{LWX2022} contributes nothing further.
In \cite[Table 6.1]{LWX2022}, Row 3 with $f=1$ gives $(L,T)=(\Omega_7(3),\Omega_6^+(3))$, using ${}^2\G_2(3)'\cong\PSL_2(8)$. The remaining rows of \cite[Tables 6.1 and 7.1]{LWX2022} do not contain the required nonabelian composition factor. Finally, Row 1 of \cite[Table 8.1]{LWX2022}, with $(m,q,a,b)=(6,2,2,3)$, gives $(L,T)=(\POmega^+_{12}(2),\Sp_{10}(2))$, and no further pair arises from the remaining rows.
By the preceding analysis, $(L,T)$ is one of $(\PSp_4(8),\PSL_2(64))$, $(\Sp_6(2),\A_8)$, $(\Sp_{12}(2),\Omega^-_{12}(2))$, $(\Omega_7(3),\Omega_6^+(3))$ and $(\POmega^+_{12}(2),\Sp_{10}(2))$. For $(L,T)=(\PSp_4(8),\PSL_2(64))$, we have $|L:T|=8^2(8^2-1)$ and so $v_2(|L:T|)=6$. 
The factorization $L=TL_{\overline{\a}}$ gives $|L:T|=|L_{\overline{\a}}:T\cap L_{\overline{\a}}|$, so $|L:T|\mid |L_{\overline{\a}}|$, contradicting $v_2(|L_{\overline{\a}}|)\leqslant 4$. For $L\in\{\Sp_6(2),\Sp_{12}(2),\Omega_7(3),\POmega^+_{12}(2)\}$, \cite{Atlas} gives $|\Out(L)|\leqslant 2$, contradicting the preceding conclusion that $3\mid |\Out(L)|$. This proves Step 1.

\vspace{0.5em}

\emph{Step 2. We show that case (c) does not occur.} Assume that (c) holds, that is, $|\Ga_N(\overline{\a})|=p^2$ and $\ZZ_p^2:\SL_2(5)\unlhd H\unlhd Y\leqslant \ZZ_p^2:(\ZZ_{p-1}.\PSL_2(5))$, where $p\in\{19,29,59\}$.  Let $E=\soc(H)=\soc(Y)\cong\ZZ_p^2$, and regard $E$ as a 2-dimensional vector space over $\mathbb{F}_p$. The subgroup $\SL_2(5)$ acts faithfully on $E$, so $\SL_2(5)\leqslant \GL_2(p)$. Since $\SL_2(5)'=\SL_2(5)$ and $\GL_2(p)/\SL_2(p)$ is abelian, we have $\SL_2(5)\leqslant \SL_2(p)$. 
For $0\neq v\in E$, let $w\in E$ such that $(v,w)$ is a basis of $E$. An element of $\SL_2(p)$ fixing $v$ sends $w$ to $w+av$ for some $a\in\mathbb{F}_p$, and so $|\SL_2(p)_v|=p$. Thus $|\SL_2(5)_v|$ divides both $p$ and $|\SL_2(5)|=120$. Since $p\nmid 120$, we get that $\SL_2(5)_v=1$. Hence every $\SL_2(5)$-orbit on $E\setminus\{0\}$ has length 120.

Let $H_0$ and $Y_0$ be the stabilizers of 0 in $H$ and $Y$, respectively, and let $s$ be the number of $H_0$-orbits on $E\setminus\{0\}$. The group $H$ is not 2-transitive on $E$, so $H_0$ is not transitive on $E\setminus\{0\}$ and $s>1$. 
Recall that $\Ga_N$ is $(\overline{X},2)$-arc-transitive. Hence $\overline{X}_{\overline{\a}}$ induces a 2-transitive permutation group on $\Ga_N(\overline{\a})$, and so $Y$ is 2-transitive on $E$. Thus $Y_0$ is transitive on $E\setminus\{0\}$. 
The normality $H\unlhd Y$ gives $H_0\unlhd Y_0$, so $Y_0$ permutes the $H_0$-orbits on $E\setminus\{0\}$ transitively. Hence these $s$ orbits have equal length. 
Moreover, $(E:\SL_2(5))_0\cong \SL_2(5)$ is normal in $H_0$, and every $H_0$-orbit is a union of $\SL_2(5)$-orbits of length $120$. It follows that $120s\mid p^2-1$.

The regularity of $E$ yields $H=E:H_0$ and $Y=E:Y_0$, whence $|Y:H|=|Y_0:H_0|$. Since $H_0\unlhd Y_0$ and $Y_0$ permutes the $s$ orbits transitively, $s\mid |Y:H|$. For $p=19,29,59$, the values of $(p^2-1)/120$ are 3, 7 and 29, respectively. The inequalities $s>1$ and $120s\mid p^2-1$ now imply $s=(p^2-1)/120$. The earlier divisibility $|Y:H|\mid |\Out(L)|$ gives $s\mid |\Out(L)|$.

By \cite[Lemma 4.2]{L2025}, every nonabelian composition factor of $L_{\overline{\a}}$ is a nonabelian composition factor of either $H$ or $H_{\overline{\b}}$ for some $\overline{\b}\in\Ga_N(\overline{\a})$. Note that $H\leqslant E:(\ZZ_{p-1}.\PSL_2(5))$. Since $E:\ZZ_{p-1}$ is solvable and every proper subgroup of $\PSL_2(5)\cong\A_5$ is solvable, every nonabelian composition factor of both $H$ and $H_{\overline{\b}}$ is isomorphic to $\A_5$. Hence every nonabelian composition factor of $L_{\overline{\a}}^{(\infty)}$ is isomorphic to $\A_5$.

Since $\SL_2(5)'=\SL_2(5)$ and $[\ZZ_p^2,\SL_2(5)]=\ZZ_p^2$, we have $(\ZZ_p^2:\SL_2(5))'=\ZZ_p^2:\SL_2(5)$. Moreover, $\ZZ_p^2:\SL_2(5)\unlhd H$ and $H/(\ZZ_p^2:\SL_2(5))$ is solvable. Hence $H^{(\infty)}=\ZZ_p^2:\SL_2(5)$, and so $p^2\mid |H^{(\infty)}|$.
The equality $H=L_{\overline{\a}}/L_{\overline{\a}}^{[1]}$ gives
$H^{(\infty)}=L_{\overline{\a}}^{(\infty)}L_{\overline{\a}}^{[1]}/L_{\overline{\a}}^{[1]}\cong L_{\overline{\a}}^{(\infty)}/(L_{\overline{\a}}^{(\infty)}\cap L_{\overline{\a}}^{[1]})$. Hence $|H^{(\infty)}|\mid |L_{\overline{\a}}^{(\infty)}|$. Together with $p^2\mid |H^{(\infty)}|$, this yields $p^2\mid |L_{\overline{\a}}^{(\infty)}|$.

Next we consider the possibilities for $L$ given by the classification of finite nonabelian simple groups.
By \cite{Atlas}, $|\Out(\A_n)|=2$ for $n\geqslant 5$ and $n\neq 6$, while $|\Out(\A_6)|=4$, and every sporadic simple group has outer automorphism group of order at most 2. The divisibility $s\mid |\Out(L)|$, where $s\in\{3,7,29\}$, excludes alternating and sporadic groups $L$. 
Suppose that $L$ is an exceptional simple group of Lie type. By \cite[Theorem 1]{HLS1987}, up to interchanging the two factors, the relevant factorizations $L=TL_{\overline{\a}}$ are those listed there. None of them has a factor whose nonabelian composition factors are all isomorphic to $\A_5$ and whose order is divisible by $p^2$ for $p\in\{19,29,59\}$. Thus $L$ is not an exceptional simple group of Lie type.

Suppose finally that $L$ is a classical simple group. By \cite[Theorem 1.4]{LWX2022}, after interchanging the two factors if necessary, $(L,T,L_{\overline{\a}}^{(\infty)})$ occurs in \cite[Tables 3.1, 4.1, 5.1, 5.2, 6.1, 7.1 and 8.1]{LWX2022}. Recall that every nonabelian composition factor of $L_{\overline{\a}}^{(\infty)}$ is isomorphic to $\A_5$ and $p^2\mid |L_{\overline{\a}}^{(\infty)}|$, where $p\in\{19,29,59\}$.
In \cite[Table 3.1]{LWX2022}, only Rows 1, 7 and 10 admit a nonsolvable factor whose nonabelian composition factors are all isomorphic to $\A_5$. However, the orders of these factors are not divisible by $p^2$ for $p\in\{19,29,59\}$.  For \cite[Table 4.1]{LWX2022}, the only such possibility occurs in Row 3, where the relevant factor is $3^4:\A_5$, whose order is not divisible by $p^2$. The same argument excludes all possibilities in \cite[Tables 5.1, 6.1, 7.1 and 8.1]{LWX2022}.

It remains to consider \cite[Table 5.2]{LWX2022}. In Row 6 of \cite[Table 5.2]{LWX2022}, the factor $\ZZ_2^4:\A_5$ has order not divisible by $p^2$, while in Row 9 of \cite[Table 5.2]{LWX2022} the factor $3^{1+4}:2^{1+4}.\A_5$ also has order not divisible by $p^2$. The remaining rows are excluded by the two conditions on the nonabelian composition factors and the order of $L_{\overline{\a}}^{(\infty)}$, except possibly Rows 1--5. In these five rows, the condition $p^2\mid |L_{\overline{\a}}^{(\infty)}|$, where $p\in\{19,29,59\}$, leaves only Rows 3--5. Hence $L\cong\PSp_4(p)$, $T\cong\PSL_2(p^2)$ and $L_{\overline{\a}}^{(\infty)}\cong p^{1+2}:\SL_2(5)$.
For $p\in\{19,29,59\}$, we have $|\Out(\PSp_4(p))|=2$, contradicting $(p^2-1)/120\mid |\Out(L)|$ with $(p^2-1)/120=3$, 7 or 29. Thus case (c) does not occur. This proves Step 2.

The non-2-transitive alternatives (b) and (c) have both been eliminated. Thus $H=L_{\overline{\a}}^{\Ga_N(\overline{\a})}$ is 2-transitive on $\Ga_N(\overline{\a})$. Since $L$ is vertex-transitive, $\Ga_N$ is $(L,2)$-arc-transitive, proving Claim 2. By Claims 1 and 2, together with the case $T=L$ and the case $N\leqslant G$, this proves the lemma.  \qed

\vspace{0.5em}

\begin{lemma}\label{lem:diagonal2}
The group $\overline{X}$ is not of simple diagonal type.
\end{lemma}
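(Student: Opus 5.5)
The plan is to argue exactly as in Lemma \ref{lem:diagonal1}, now applied to the quotient graph. By Lemma \ref{lem:normal-quotient}, $\Ga$ is an $N$-cover of $\Ga_N$, and $\overline{X}$ is quasiprimitive on $V(\Ga_N)$. By Proposition \ref{prop:NQ}, which applies because $N$ has at least three orbits (see the proof of Lemma \ref{lem:normal-quotient}), $\Ga_N$ is connected and $(\overline{X},2)$-arc-transitive. Its valency equals that of $\Ga$.

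First I will check that $\Ga$, and hence $\Ga_N$, has valency at least $3$. Valency $1$ is impossible by connectivity, since $|V(\Ga)|=|T|^k>2$. If the valency is $2$, then $\Ga$ is a cycle. Its automorphism group is dihedral, so $G\cong T^k$ would be a subgroup of a dihedral group, and hence solvable, which is a contradiction.

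With valency at least $3$ established, I will apply \cite[Theorem 2]{P1993} to the connected $(\overline{X},2)$-arc-transitive graph $\Ga_N$, on which $\overline{X}$ acts faithfully and quasiprimitively. That theorem says $\overline{X}$ is of affine, almost simple, product action or twisted wreath type, and in particular not of simple diagonal type. This is the same input that gave Lemma \ref{lem:diagonal1}.

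There is no genuine obstacle here; the only care needed is verifying the hypotheses of \cite[Theorem 2]{P1993}. These are that $\overline{X}$ acts faithfully on $V(\Ga_N)$, which holds because the kernel of the action on $N$-orbits is $N$; that $\overline{X}$ is quasiprimitive; that $\Ga_N$ is $2$-arc-transitive; and that $\Ga_N$ is non-complete-bipartite with valency at least $3$. All of these are supplied by Lemma \ref{lem:normal-quotient} and Proposition \ref{prop:NQ}, and Lemma \ref{lem:normal-quotient} also gives that $\Ga_N$ is non-bipartite.

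As a cross-check, one could instead go through Lemma \ref{lem:framework} (2)(c)(i)--(iii) case by case and exclude each. This is unnecessary given Praeger's theorem.
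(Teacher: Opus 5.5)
Your proposal is correct and matches the paper's proof. The paper also takes the connected $(\overline{X},2)$-arc-transitive quotient $\Ga_N$, with $\overline{X}$ quasiprimitive, from Lemma \ref{lem:normal-quotient} and applies \cite[Theorem 2]{P1993} directly. Your extra checks (faithfulness, valency at least $3$ via non-solvability of $G$, non-bipartiteness) make explicit what the paper leaves implicit and cause no harm.
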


\pf\, By Lemma \ref{lem:normal-quotient}, the graph $\Ga_N$ is connected $(\overline{X},2)$-arc-transitive, and $\overline{X}$ is quasiprimitive on $V(\Ga_N)$. By \cite[Theorem 2]{P1993}, the group $\overline{X}$ is of affine, almost simple, product action or twisted wreath type. Thus $\overline{X}$ is not of simple diagonal type. \qed

\vspace{0.5em}

\begin{lemma}\label{lem:PA2}
Suppose that $\overline{X}$ is of product action type. Let $\soc(\overline{X})=Q_1\times\cdots\times Q_r\cong Q^r$, where $r\geqslant 2$ and each $Q_i\cong Q$ is nonabelian simple. If $\overline{G}\ntrianglelefteq \overline{X}$, then $(Q,T,\ell)$ is listed in Table \ref{tab:equal-prime-r}. If $N\leqslant G$, then $(Q,T,\ell)$ is listed in Table \ref{tab:exact-pa}.
\end{lemma}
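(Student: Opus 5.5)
The plan is to run the argument of Lemma \ref{lem:PA1} on the quotient $\Ga_N$, with $\overline{X}$ in place of $X$ and $\overline{G}\cong T^\ell$ in place of $G$. By Lemma \ref{lem:normal-quotient}, $\Ga_N$ is connected, non-bipartite and $(\overline{X},2)$-arc-transitive, and $\overline{X}$ is quasiprimitive of product action type with $L=\soc(\overline{X})\cong Q^r$. As in Lemma \ref{lem:PA1}, \cite[Theorem 2]{P1993} rules out the simple diagonal component, so $\overline{X}\leqslant H\wr\Sy_r$ with $H$ almost simple. By \cite[Section 2]{P1993}, $L_{\overline{\a}}$ is then a subdirect subgroup of $R_1\times\cdots\times R_r$ with $1\neq R_i<Q_i$. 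Proposition \ref{subgroup} gives $\overline{G}\leqslant L$, and hence $L=\overline{G}L_{\overline{\a}}$. The key difference from the quasiprimitive case is that $\overline{G}$ need not be regular: the intersection $\overline{G}\cap L_{\overline{\a}}=\overline{G}_{\overline{\a}}$ may be nontrivial, and by Lemma \ref{lem:normal-quotient} it is trivial exactly when $N\leqslant G$.

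First, assume $\overline{G}\ntrianglelefteq\overline{X}$. I would repeat Step 1 of Lemma \ref{lem:PA1}: use \cite[Lemma 2.1, Theorem 1.1]{Lu2026} and \cite[Theorem 4.2]{LS2023} to show that each $\pi_i|_{L_{\overline{\a}}}$ is injective. The exclusion of the solvable non-injective alternatives must be redone using $\overline{G}$. In subcase (a), $k=r$ is replaced by $\ell=r$, and the count $|L_{\overline{\a}}|=c^r$ becomes $c^r\mid|L_{\overline{\a}}|$. Subcase (b) used only the factorization $Q_j=\pi_j(\overline{G})R_j$, which survives. Subcase (c) needs only the solvability of $L_{\overline{\a}}$. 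In the case $T\cong Q$, regularity was not essential: the argument needs only $Q_1\times Q_2=\bar\pi(\overline{G})\bar\pi(L_{\overline{\a}})$ and \cite[Lemma 5.2]{BPS2004}.

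With injectivity established, $|L_{\overline{\a}}|=|R_i|<|Q|$, and $|Q|^r=|T|^\ell|L_{\overline{\a}}|/|\overline{G}\cap L_{\overline{\a}}|$. Since $|\overline{G}\cap L_{\overline{\a}}|$ divides $|L_{\overline{\a}}|$, this gives $|Q|^{r-1}\mid|T|^\ell$, so $\Pi(|Q|)=\Pi(|T|)$ as before. It also rules out $T\cong Q$: otherwise $|L_{\overline{\a}}:\overline{G}_{\overline{\a}}|=|Q|^{r-\ell}$ with $r>\ell$ (else $\overline{G}=L$ would be normal), contradicting $|L_{\overline{\a}}|<|Q|$. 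Lemma \ref{lem:framework} (2)(d) then leaves (iii), since (ii) and (iv) fail by Lemma \ref{lem:strip-pair-exclusion} (i) and (v) fails by the order bound used in Step 2 of Lemma \ref{lem:PA1}. Thus $\ell=r$ and $T_i<Q_i$, and Lemma \ref{lem:equal-prime-divisors} applies with $R=L_{\overline{\a}}\cong R_1<Q$ and $T^r\cap R=\overline{G}\cap L_{\overline{\a}}$. This gives $(Q,T,\ell)$ in Table \ref{tab:equal-prime-r}.

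If moreover $N\leqslant G$, then $\overline{G}$ is regular, so $\overline{G}\cap L_{\overline{\a}}=1$. Steps 3 and 4 of Lemma \ref{lem:PA1} then apply verbatim. They produce factorizations $Q_i=T_iB_i$ with $T_i\cap B_i=1$, $|B_i|=d$, and $R_i=A_iB_i$, and the case analysis leaves exactly the triples in Table \ref{tab:exact-pa}. Row 5 is excluded by the last assertion of Lemma \ref{lem:equal-prime-divisors}. If $N\leqslant G$ but $\overline{G}\unlhd\overline{X}$, then $\overline{G}=L$ by uniqueness of the minimal normal subgroup, so the statement is about the non-normal case.

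The main obstacle is the injectivity step when $\overline{G}_{\overline{\a}}\neq1$. In subcase (a) the equality $c^r=p^tm_1m$ relaxes to $c^r\cdot|\overline{G}_{\overline{\a}}|=p^tm_1m$. I would try to recover $r\mid t$ by noting that $\overline{G}_{\overline{\a}}$ is a subgroup of the solvable group $L_{\overline{\a}}$ of shape $(\ZZ_p^t\times\ZZ_{m_1}).\ZZ_m$. If that fails, I would invoke the primitive-prime-divisor condition of \cite[Theorem 1.1 (2)]{Lu2026} directly, via $z\mid r$, together with $c\geqslant 6$.
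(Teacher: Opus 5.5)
Your overall route is the paper's: rerun the proof of Lemma \ref{lem:PA1} on $(\Ga_N,\overline{G},\overline{X})$, with regularity replaced by $L=\overline{G}L_{\overline{\a}}$ and $|L:\overline{G}|\mid|L_{\overline{\a}}|$, and then use regularity of $\overline{G}$ when $N\leqslant G$. The paper applies Lemma \ref{lem:PA1} wholesale to the quotient; rerunning Steps 3--4 as you propose is equivalent. Several parts of your proposal are sound:
\begin{enumerate}
\item the injective case, including $|Q|^{r-1}\mid|T|^\ell$, the exclusion of $T\cong Q$, and the strip and alternating exclusions;
\item the $T\cong Q$ exclusion and the strip subcase in the non-injective case.
\end{enumerate}
For the second assertion, you should also state why $\overline{G}\unlhd\overline{X}$ is impossible when $N\leqslant G$: it would force $\overline{G}=L$ to be regular, whereas $L_{\overline{\a}}\neq1$. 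Merely setting this case aside is not enough, since $(T,T,\ell)$ is not in Table \ref{tab:exact-pa}.

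The genuine gap is the step you leave open: excluding the non-injective alternative of \cite[Theorem 1.1(2)]{Lu2026} when $\ell=r$ and $T_i<Q_i$.

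\textbf{Why $r\mid t$ is lost.} In Lemma \ref{lem:PA1} all three sub-alternatives were dismissed via $r\mid t$, which came from the equality $c^r=p^tm_1m$. Now you only have $c^r|\overline{G}_{\overline{\a}}|=p^tm_1m$. When $p\nmid c$, the whole $p$-part lies in $\overline{G}_{\overline{\a}}$, so no argument about subgroups of $(\ZZ_p^t\times\ZZ_{m_1}).\ZZ_m$ can recover $r\mid t$.

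\textbf{Why your fallback falls short.} From $z\mid r$ and $z>t$ you get $r>t$, which settles $p\mid c$ because $rv_p(c)\leqslant t$. But for $p\nmid c$ you only get $6^r\leqslant c^r\leqslant m_1m<p^{2e}\leqslant p^t<p^r$, which is not a contradiction.

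\textbf{The missing idea} is an upper bound on $p$. Since $\Core_Q(T)=1$, $Q$ embeds in $\Sy_c$. Also $p\mid|L_{\overline{\a}}|$ forces $p\mid|Q|$. Hence $p\leqslant c$, contradicting $c<p$.

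The same device, or a separate count, is needed for the other sub-alternatives:
\begin{enumerate}
\item alternative (iii): here $t=2$, $m_1m\leqslant(p-1)^2$ and $r\geqslant3$ give $c<p$, so the bound $p\leqslant c$ again yields a contradiction;
\item $(p,t)=(2,6)$ in alternative (i): if $7\mid r$ then $c^r\geqslant6^7>2^6\cdot7^2$; if $3\mid r$ then $c$ must be a $2$-power, so $rv_2(c)\geqslant9>6$;
\item alternative (ii): $c^r\geqslant6^3>2^6$.
\end{enumerate}
Without these arguments, injectivity of the projections is not established. Both assertions of the lemma depend on it, so both remain unproved.
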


\pf\, By Proposition \ref{subgroup}, $\overline{G}\leqslant L:=\soc(\overline{X})$. Since $\overline{X}$ is of product action type, \cite[Section 2]{P1993} gives $\overline{X}\leqslant Y\wr\Sy_r$ in product action, where $Y$ is quasiprimitive and $\soc(Y)\cong Q$. The group $Y$ is either almost simple or of simple diagonal type. By Lemma \ref{lem:normal-quotient}, $\Ga_N$ is connected $(\overline{X},2)$-arc-transitive. The simple diagonal possibility is excluded by \cite[Theorem 2]{P1993}, leaving $Y$ almost simple.
For $1\leqslant i\leqslant r$, let $\pi_i:L\to Q_i$ be the natural projection and let $R_i=\pi_i(L_{\overline{\a}})$. By \cite[Section 2]{P1993}, $1<R_i<Q_i$ for every $i$, the groups $R_1,\ldots,R_r$ are pairwise isomorphic, and $\pi_i(L_{\overline{\a}})=R_i$. Thus $L_{\overline{\a}}$ is a subdirect subgroup of $R_1\times\cdots\times R_r$.
Both $L$ and $\overline{G}$ are transitive on $V(\Ga_N)$, and so $L=\overline{G}L_{\overline{\a}}$. Hence $|L|=|\overline{G}||L_{\overline{\a}}|/|\overline{G}\cap L_{\overline{\a}}|$, and therefore $|L:\overline{G}|=|L_{\overline{\a}}:\overline{G}\cap L_{\overline{\a}}|$. In particular, $|L:\overline{G}|\mid |L_{\overline{\a}}|$.

Assume first that $\overline{G}\ntrianglelefteq\overline{X}$. Lemma \ref{lem:normal-quotient} implies that $\Ga_N$ is non-bipartite. Choose an edge $\{\overline{\a},\overline{\b}\}$ of $\Ga_N$. By \cite[Lemma 2.1]{Lu2026}, $\overline{X}=\l\overline{X}_{\overline{\a}},\overline{X}_{\overline{\b}}\r$, so \cite[Theorem 1.1]{Lu2026} applies with socle $L$. 
We consider separately the two possibilities according as all the restrictions $\pi_i|_{L_{\overline{\a}}}$ are injective or at least one of them is not injective. In the latter case, \cite[Theorem 1.1(2)]{Lu2026} also determines the structure of $L_{\overline{\a}}$.

\smallskip
\noindent\textbf{Case 1.} Every restriction $\pi_i|_{L_{\overline{\a}}}$ is injective.

For every $i$, the map $\pi_i|_{L_{\overline{\a}}}:L_{\overline{\a}}\to R_i$ is an isomorphism, so $|L_{\overline{\a}}|=|R_i|<|Q|$. We first prove that $\ell\geqslant 2$ and $T\not\cong Q$. Assume that $\ell=1$. Then $\overline{G}\neq L$, and Lemma \ref{lem:framework}(2)(d)(ii) gives $r=2$. The nontrivial group $\overline{G}\cong T$ has a nontrivial projection $\pi_i(\overline{G})$ for some $i$. 
The kernel of $\pi_i|_{\overline{G}}$ is normal in the simple group $\overline{G}$, so $\pi_i(\overline{G})\cong T\leqslant Q_i$ and $|T|\leqslant |Q|$. On the other hand, $L=\overline{G}L_{\overline{\a}}$ implies $|L|\leqslant |\overline{G}||L_{\overline{\a}}|$. It follows that $|Q|^2=|L|\leqslant |\overline{G}||L_{\overline{\a}}|\leqslant |Q||L_{\overline{\a}}|<|Q|^2$, a contradiction. Thus $\ell\geqslant 2$.

Suppose next that $T\cong Q$. The divisibility $|\overline{G}|\mid |L|$ gives $\ell\leqslant r$. If $\ell=r$, then $|\overline{G}|=|L|$ and $\overline{G}\leqslant L$, forcing $\overline{G}=L\unlhd\overline{X}$, contrary to our assumption. Thus $\ell<r$ and so $|L:\overline{G}|=|Q|^{r-\ell}\geqslant |Q|$. This is impossible because $|L:\overline{G}|\mid |L_{\overline{\a}}|$ whereas $|L_{\overline{\a}}|<|Q|$. Then $T\not\cong Q$.

Assume that $Q_i\leqslant \overline{G}$ for some $i$. The inclusions $Q_i\unlhd L$ and $\overline{G}\leqslant L$ show that $Q_i\unlhd \overline{G}\cong T^\ell$. By Proposition \ref{obs}, $Q_i\cong T$, contrary to $T\not\cong Q$.
Hence $\overline{G}$ contains no minimal normal subgroup of $L$.
We may therefore apply \cite[Theorem 1.3 (3)]{DS2020}. Either $\ell=r$ and, after relabeling, $T_i<Q_i$ for every $i$, or the factors $T_i$ are pairwise disjoint strips with some strip involving two of the groups $Q_j$, or $L\cong (\A_n)^r$, $|V(\Ga_N)|=n^r$ and $L_{\overline{\a}}\cong (\A_{n-1})^r$ for some $n\geqslant 10$.

We first compare the prime divisors of $Q$ and $T$. Let $s$ be a prime divisor of $|Q|$. The equality $|L_{\overline{\a}}|=|R_i|$ and the inclusion $R_i<Q_i$ imply $v_s(|L_{\overline{\a}}|)\leqslant v_s(|Q|)$. Hence $v_s(|V(\Ga_N)|)=rv_s(|Q|)-v_s(|L_{\overline{\a}}|)\geqslant (r-1)v_s(|Q|)>0$, so $s\mid |V(\Ga_N)|$. 
The transitive action of $\overline{G}$ implies that $|V(\Ga_N)|$ divides $|\overline{G}|=|T|^\ell$, and so $s\mid |T|$. Thus $\Pi(|Q|)\subseteq \Pi(|T|)$. The reverse inclusion follows from $\overline{G}\leqslant Q^r$, giving $\Pi(|Q|)=\Pi(|T|)$.

Lemma \ref{lem:strip-pair-exclusion} (i) excludes the strip case. In the alternating case, $|L_{\overline{\a}}|=|\A_{n-1}|^r\geqslant |\A_{n-1}|^2> |\A_n|=|Q|$, contrary to $|L_{\overline{\a}}|< |Q|$. Thus $\ell=r$ and, after relabeling, $T_i<Q_i$ for every $i$. Projecting $L=\overline{G}L_{\overline{\a}}$ onto $Q_i$ gives $Q_i=T_iR_i$. Here $T_i\cong T$, $R_i\cong L_{\overline{\a}}<Q_i\cong Q$, $r=\ell\geqslant 2$ and $\Pi(|Q|)=\Pi(|T|)$. Lemma \ref{lem:equal-prime-divisors} now places $(Q,T,r)$ in Table \ref{tab:equal-prime-r}. This proves the first assertion in Case 1.

\smallskip
\noindent{\textbf{Case 2.} Some restriction $\pi_i|_{L_{\overline{\a}}}$ is not injective.}

By \cite[Theorem 1.1(2)]{Lu2026}, $L_{\overline{\a}}\cong (\ZZ_p^t\times\ZZ_{m_1}).\ZZ_m$, where $|\Ga_N(\overline{\a})|=p^t$, $m_1\mid m$ and $m\mid p^e-1$ for some proper divisor $e$ of $t$. In particular, $L_{\overline{\a}}$ and every $R_i$ are solvable.
The connected $(\overline{X},2)$-arc-transitive graph $\Ga_N$ is basic with respect to $\overline{X}$ because $\overline{X}$ is quasiprimitive. It has valency at least 3 and, being non-bipartite, is not complete bipartite. Since $L=\soc(\overline{X})=Q_1\times\cdots\times Q_r$ with $r\geqslant 2$, \cite[Theorem 4.2]{LS2023} implies that every $Q_i$ is semiregular on $V(\Ga_N)$. In particular, $Q_i\cap L_{\overline{\a}}=1$ for each $i$. 
If $r=2$, then $\ker(\pi_1|_{L_{\overline{\a}}})=L_{\overline{\a}}\cap Q_2=1$ and $\ker(\pi_2|_{L_{\overline{\a}}})=L_{\overline{\a}}\cap Q_1=1$, contradicting the assumption of Case 2. Thus $r\geqslant 3$. Lemma \ref{lem:framework} (2)(d)(ii) also excludes $\ell=1$, so $\ell\geqslant 2$.

We next prove that $T\not\cong Q$. Assume the contrary. Then $\ell<r$, because $\ell=r$ would force $\overline{G}=L$. 
Let $\overline{G}=T_1\times\cdots\times T_\ell$. For $Q_i\in \mathcal{P}(T_j)=\{Q_i\mid \pi_i(T_j)\neq 1\}$, simplicity gives $\pi_i(T_j)\cong T_j\cong Q_i$, whence $\pi_i(T_j)=Q_i$. Distinct factors $T_j$ commute. 
No coordinate $Q_i$ can therefore lie in the supports of two distinct factors, and the sets $\mathcal{P}(T_1),\ldots,\mathcal{P}(T_\ell)$ are pairwise disjoint.  Moreover, their union is $\{Q_1,\ldots,Q_r\}$; otherwise $\pi_i(\overline{G})=1$ for some $i$, and projecting $L=\overline{G}L_{\overline{\a}}$ onto $Q_i$ would give $Q_i=R_i$, contrary to $R_i<Q_i$.

The inequality $r>\ell$ yields a factor $T_j$ whose support contains at least two coordinates. Relabel so that $Q_1,Q_2\in\mathcal{P}(T_j)$. 
The projection of $T_j$ onto $Q_1\times Q_2$ is a diagonal subgroup $\{(x,x^\varphi)\mid x\in Q_1\}$ for some isomorphism $\varphi:Q_1\to Q_2$. The other factors of $\overline{G}$ have trivial projection on these two coordinates. 
Projecting $L=\overline{G}L_{\overline{\a}}$ onto $Q_1\times Q_2$ gives $Q_1\times Q_2=\{(x,x^\varphi)\mid x\in Q_1\}\pi_{1,2}(L_{\overline{\a}})$. Taking an element of the form $(y,1)$ and comparing its two coordinates yields $Q_1=R_2^{\varphi^{-1}}R_1$. 
This is a factorization of the nonabelian simple group $Q_1$ by two isomorphic solvable proper subgroups, contrary to \cite[Lemma 5.2]{BPS2004}. Hence $T\not\cong Q$.

The argument used in Case 1 now shows that no $Q_i$ is contained in $\overline{G}$, so \cite[Theorem 1.3(3)]{DS2020} applies. The alternating possibility is excluded immediately because it gives $L_{\overline{\a}}\cong (\A_{n-1})^r$, whereas $L_{\overline{\a}}$ is solvable. In the strip case, each coordinate $Q_i$ belongs to the support of a unique factor $T_j$. Projection onto $Q_i$ yields $Q_i=\pi_i(T_j)R_i$, where $\pi_i(T_j)\cong T$ and $R_i<Q_i$ is solvable. Lemma \ref{lem:strip-pair-exclusion} (ii) excludes this possibility. Thus $\ell=r$ and, after relabeling, $T_i<Q_i$ for every $i$.

Let $c=|Q:T|$. Since $L=\overline{G}L_{\overline{\a}}$, the index $|L:\overline{G}|=c^r$ divides $|L_{\overline{\a}}|$. The point-stabilizer formula from \cite[Theorem 1.1(2)]{Lu2026} gives $c^r\mid p^tm_1m$. The action of $Q$ on the right cosets of $T$ is faithful because $\Core_Q(T)=1$, so $Q\leqslant \Sy_c$. If $c\leqslant5$, then the nonabelian simplicity of $Q$ forces $Q\cong\A_5$, which contains no proper nonabelian simple subgroup. Thus $c\geqslant 6$.

We now exclude the three possibilities in \cite[Theorem 1.1 (2)]{Lu2026}. Assume first that \cite[Theorem 1.1 (2)(i)]{Lu2026} holds and $(p,t)\neq (2,6)$. Let $z$ be the primitive prime divisor of $p^t-1$ specified there. Proposition \ref{prop:ppd} yields $z>t$, while \cite[Theorem 1.1 (2)(i)]{Lu2026} gives $z\mid r$. Hence $r>t$.

Assume that $p\mid c$. The factors $m_1$ and $m$ divide $p^e-1$, so neither is divisible by $p$. From $c^r\mid p^tm_1m$ we obtain $rv_p(c)\leqslant t$, contrary to $r>t$. Thus $p\nmid c$. It follows that $c^r\mid m_1m$. 
Moreover, $m_1m\leqslant (p^e-1)^2< p^{2e}$, and the proper divisibility $e\mid t$ gives $2e\leqslant t$. Hence $c^r<p^t$. The inequality $r>t$ now implies $c<p$. On the other hand, $p$ divides $|L_{\overline{\a}}|$, so $p$ divides $|Q|$ because $L_{\overline{\a}}\leqslant Q^r$. The embedding $Q\leqslant \Sy_c$ then forces $p\leqslant c$, a contradiction.

Consider next $(p,t)=(2,6)$. By \cite[Theorem 1.1 (2)(i)]{Lu2026}, either $7\mid r$ or $3\mid r$. We have $m_1m\leqslant 7^2$. If $7\mid r$, then $r\geqslant 7$, whereas $c\geqslant 6$, so $c^r\geqslant 6^7> 2^6\cdot 7^2\geqslant |L_{\overline{\a}}|$, impossible. Hence $3\mid r$. 
Every odd prime divisor of $m_1m$ occurs there with exponent at most 2. The divisibility $c^r\mid 2^6m_1m$ and $r\geqslant 3$ force $c$ to be a power of 2. Thus $c\geqslant 8$, and $rv_2(c)\geqslant 9$. This contradicts $v_2(|L_{\overline{\a}}|)=6$.

Suppose that \cite[Theorem 1.1 (2)(ii)]{Lu2026} holds. Then $(p,t)=(2,6)$ and $L$ is regular on the edges or arcs of $\Ga_N$. The valency of $\Ga_N$ is $2^6$, so $|L_{\overline{\a}}|$ is either $2^5$ or $2^6$. However, $r\geqslant 3$ and $c\geqslant 6$ give $c^r\geqslant 6^3>2^6$, contrary to $c^r\mid |L_{\overline{\a}}|$.

Next assume that \cite[Theorem 1.1 (2)(iii)]{Lu2026} holds. Then $t=2$ and $p$ is a Mersenne prime. If $p\mid c$, the divisibility $c^r\mid p^2m_1m$ gives $rv_p(c)\leqslant 2$, contrary to $r\geqslant 3$. Hence $p\nmid c$. 
The only proper divisor of $t$ is 1, so $m_1m\leqslant (p-1)^2$. Thus $c^r\mid m_1m$ and $c^r<(p-1)^2<p^2$. 
Then $c<p$ as $r\geqslant 3$. Yet $p\mid |L_{\overline{\a}}|$ implies $p\mid |Q|$, while $Q\leqslant \Sy_c$ forces $p\leqslant c$. This contradiction eliminates Case 2.
Hence Case 1 holds whenever $\overline{G}\ntrianglelefteq \overline{X}$, and the first assertion follows. In particular, $\ell=r\geqslant 2$.

Finally assume that $N\leqslant G$. Lemma \ref{lem:normal-quotient} shows that $\overline{G}$ is regular on $V(\Ga_N)$. Assume that $\overline{G}\unlhd \overline{X}$. The quasiprimitive group $\overline{X}$ has unique minimal normal subgroup $L$, so every nontrivial normal subgroup of $\overline{X}$ contains $L$. Thus $L\leqslant \overline{G}$. Together with $\overline{G}\leqslant L$, this gives $\overline{G}=L$, which is impossible because $\overline{G}$ is regular whereas $L_{\overline{\a}}\neq 1$. Hence $\overline{G}\ntrianglelefteq \overline{X}$.
The first assertion now gives $\ell=r\geqslant 2$. Assume that $\Ga_N$ is complete. Its $(\overline{X},2)$-arc-transitivity implies that $\overline{X}$ is 2-transitive on $V(\Ga_N)$. This is impossible for a product action group with $r\geqslant 2$, because a finite 2-transitive group is of affine or almost simple type. Thus $\Ga_N$ is not complete.
The regular subgroup $\overline{G}\cong T^\ell$ identifies $\Ga_N$ with a Cayley graph of $\overline{G}$. We have $\ell\geqslant 2$, $\overline{G}\ntrianglelefteq \overline{X}$ and $\Ga_N$ is not complete. Lemma \ref{lem:PA1} therefore applies to $(\Ga_N,\overline{G},\overline{X})$, with $\ell$ in place of $k$, and places $(Q,T,\ell)$ in Table \ref{tab:exact-pa}. This proves the second assertion and completes the proof. \qed

\begin{lemma}\label{lem:TW2}
If $\overline{X}$ is of twisted wreath type, then $\Ga_N$ is a Cayley graph of $\overline{G}=\soc(\overline{X})$.
\end{lemma}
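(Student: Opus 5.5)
The plan mirrors the argument of Lemma \ref{lem:TW1}, transported to the quotient. Let $L=\soc(\overline{X})$. By Lemma \ref{lem:normal-quotient}, $\overline{X}$ is quasiprimitive on $V(\Ga_N)$, and $\overline{G}\cong T^\ell$ is a transitive nonabelian characteristically simple subgroup of $\overline{X}$. In twisted wreath type the socle is nonabelian, namely $L\cong Q^r$ with $Q$ nonabelian simple. Proposition \ref{subgroup} therefore applies and gives $\overline{G}\leqslant L$.

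Next, for a quasiprimitive group of twisted wreath type the socle $L$ is regular on the point set; this is the same fact invoked in Lemma \ref{lem:TW1} (see \cite[Theorem 1]{P1993}). So $|L|=|V(\Ga_N)|$. Since $\overline{G}$ is transitive on $V(\Ga_N)$, we have $|\overline{G}|\geqslant |V(\Ga_N)|=|L|$. Together with $\overline{G}\leqslant L$, this forces $\overline{G}=L=\soc(\overline{X})$. In particular $\overline{G}$ is regular on $V(\Ga_N)$.

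Finally, a connected graph admitting a regular subgroup $\overline{G}$ of its automorphism group is isomorphic to a Cayley graph of $\overline{G}$. Concretely, identify $V(\Ga_N)$ with $\overline{G}$ via $g\mapsto \overline{\a}^g$. Take $\overline{S}=\{g\in\overline{G}\mid \overline{\a}^g\in\Ga_N(\overline{\a})\}$, which is inverse-closed because $\Ga_N$ is undirected and $\overline{G}$ acts by automorphisms (Proposition \ref{prop:NQ} gives $\overline{X}\leqslant\Aut(\Ga_N)$). Then $\Ga_N\cong\Cay(\overline{G},\overline{S})$, and connectivity of $\Ga_N$ gives $\l\overline{S}\r=\overline{G}$.

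None of these steps is a real obstacle. The only point needing care is that $\overline{X}$ acts faithfully on $V(\Ga_N)$, so that $\overline{G}$ and $L$ are genuine permutation groups there. This is established in Lemma \ref{lem:normal-quotient}, where the kernel of the action equals $N$.
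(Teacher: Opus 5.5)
Your proof is correct and follows essentially the paper's route. The paper cites Lemma \ref{lem:framework}(2)(e), whose proof is exactly your argument: Proposition \ref{subgroup} gives $\overline{G}\leqslant\soc(\overline{X})$, and regularity of the twisted wreath socle forces equality. The paper then concludes that $\overline{G}$ is regular, so $\Ga_N$ is a Cayley graph of $\overline{G}$, indeed an $\overline{X}$-normal one since $\overline{G}=\soc(\overline{X})\unlhd\overline{X}$; your explicit identification of $V(\Ga_N)$ with $\overline{G}$ just spells out that standard final step.
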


\pf\, By Lemma \ref{lem:framework} (2)(e), we have $\overline{G}=\soc(\overline{X})$. The socle of a quasiprimitive group of twisted wreath type is regular, so $\overline{G}$ is regular and normal in $\overline{X}$. Thus $\Ga_N$ is an $\overline{X}$-normal Cayley graph of $\overline{G}$. \qed

\vspace{0.5em}

The preceding results determine the possible quasiprimitive types of $\overline{X}$ and the structure arising in each case. We now combine them to prove the non-quasiprimitive part of the main theorem.

\textbf{Proof of Theorem \ref{thm:non-quasiprimitive}}

\pf\, Let $\Ga=\Cay(G,S)$ be a connected $(X,2)$-arc-transitive Cayley graph, where $G=T_1\times\cdots\times T_k\cong T^k$, each $T_i\cong T$ is a finite nonabelian simple group, $k\geqslant 2$, and $G<X\leqslant\Aut(\Ga)$. Suppose that $X$ is not quasiprimitive on $V(\Ga)$, and let $N$ be a maximal intransitive normal subgroup of $X$. Fix $\a\in V(\Ga)$, let $\overline{\a}=\a^N$, $\overline{X}=X/N$ and $\overline{G}=GN/N$. By Lemma \ref{lem:normal-quotient}, the subgroup $N$ is semiregular on $V(\Ga)$, the graph $\Ga$ is an $N$-cover of $\Ga_N$, the graph $\Ga_N$ is nonbipartite and $(\overline{X},2)$-arc-transitive, and $\overline{X}$ is quasiprimitive on $V(\Ga_N)$. Moreover, $\overline{G}\cong T^\ell$ for some $1\leqslant\ell \leqslant k$, and $\overline{G}$ is regular on $V(\Ga_N)$ if and only if $N\leqslant G$.

By \cite[Theorem 1]{P1993}, the quasiprimitive group $\overline{X}$ is of affine, almost simple, simple diagonal, product action or twisted wreath type. Suppose first that $\overline{X}$ is of affine type. Lemma \ref{lem:affine2} gives $T\cong\PSL_2(7)$ and $\Ga_N\cong\K_{2^{3\ell}}$, which is exactly the affine alternative in the theorem.

Suppose next that $\overline{X}$ is of almost simple type, and let $L=\soc(\overline{X})$. By Lemma \ref{lem:AS2}, either $\Ga_N$ is complete, or $\ell=1$ and $\overline{G}\cong T\leqslant L$. In the latter case, $\overline{G}=L$, one of the cases listed in Tables \ref{tab:AS1}, \ref{tab:AS2}, \ref{tab:AS3}, \ref{tab:AS4} and \ref{tab:AS5} occurs, or $\Ga_N$ is $(L,2)$-arc-transitive. 
The simple diagonal type does not occur by Lemma \ref{lem:diagonal2}. 
Suppose now that $\overline{X}$ is of product action type, and let $L=\soc(\overline{X})=Q_1\times\cdots\times Q_r\cong Q^r$, where $r\geqslant 2$. If $\overline{G}\unlhd\overline{X}$, then the uniqueness of the minimal normal subgroup $L$ of the quasiprimitive group $\overline{X}$ gives $L\leqslant\overline{G}$. 
Proposition \ref{subgroup} gives $\overline{G}\leqslant L$, and so $\overline{G}=L$. Thus $T\cong Q$ and $\ell=r$, so $\soc(\overline{X})\cong Q^\ell$, as stated in the theorem. If $\overline{G}\ntrianglelefteq\overline{X}$, Lemma \ref{lem:PA2} gives $(Q,T,\ell)$ in Table \ref{tab:equal-prime-r}; in the proof of that lemma one has $\ell=r\geqslant 2$. 
If $N\leqslant G$, then $\overline{G}$ is regular on $V(\Ga_N)$ by Lemma \ref{lem:normal-quotient}, and the second assertion of Lemma \ref{lem:PA2} gives $(Q,T,\ell)$ in Table \ref{tab:exact-pa}. 
It remains to consider the twisted wreath type. By Lemma \ref{lem:TW2}, we have $\overline{G}=\soc(\overline{X})$, and $\Ga_N$ is an $\overline{X}$-normal Cayley graph of $\overline{G}$. 

The five quasiprimitive types have now been exhausted, and the proof is complete. \qed

\section{Proof of the main theorem}\label{sec6}

The quasiprimitive case was determined in Theorem \ref{thm:qprim}, while Theorem \ref{thm:non-quasiprimitive} completes the analysis when $X$ is not quasiprimitive on $V(\Ga)$. We now combine these two results to prove the main theorem.

\textbf{Proof of Theorem \ref{main}}

\pf\, Let $\Ga=\Cay(G,S)$ be a connected $(X,2)$-arc-transitive Cayley graph, where $G=T_1\times\cdots\times T_k\cong T^k$, each $T_i\cong T$ is a finite nonabelian simple group, $k\geqslant 2$, and $G<X\leqslant\Aut(\Ga)$.

Suppose first that $X$ is quasiprimitive on $V(\Ga)$. Theorem \ref{thm:qprim} gives the complete list of possibilities in this case. In particular, $\Ga$ is $X$-normal, $\Ga$ is complete, or $X$ is of product action type with $\soc(X)\cong Q^k$ and $(Q,T,k)$ as listed in Table \ref{tab:exact-pa}. These are exactly the alternatives in Theorem \ref{main} arising from the quasiprimitive action of $X$.

Suppose next that $X$ is not quasiprimitive on $V(\Ga)$. Let $N$ be a maximal intransitive normal subgroup of $X$, let $\overline{X}=X/N$, and let $\overline{G}=GN/N$. By Theorem \ref{thm:non-quasiprimitive}, the graph $\Ga$ is an $N$-cover of $\Ga_N$, the group $\overline{X}$ is quasiprimitive on $V(\Ga_N)$, and the affine, almost simple, product action and twisted wreath possibilities for $\overline{X}$ are precisely those listed in Theorem \ref{main}. The simple diagonal type does not occur.

Thus the cases in which $X$ is quasiprimitive and not quasiprimitive exhaust all possibilities for the action of $X$ on $V(\Ga)$, and all conclusions in Theorem \ref{main} follow.
\qed

\appendix
\section{Magma code}
\subsection{Magma verification 1}\label{app:M12}

The following Magma computation verifies that $\M_{12}$ has exactly two conjugacy classes of subgroups of order $144$, and that each representative has a unique conjugacy class of subgroups of order 12.

\begin{lstlisting}[basicstyle=\ttfamily\small,breaklines=true,columns=fullflexible]
M := PermutationGroup<12 | (1,2,3,4,5,6,7,8,9,10,11), (1,12,5,2,9,4,3,7)(6,10,11,8)>;
assert Order(M) eq 95040;
C := Subgroups(M : OrderEqual := 144);
assert #C eq 2;
for c in C do
    R := c`subgroup;
    assert #Subgroups(R : OrderEqual := 12) eq 1;
end for;
print "All assertions passed.";
\end{lstlisting}

\subsection{Magma verification 2}\label{app:A6-wreath}

The following Magma computation \cite{magma} verifies the subgroup intersections required in the proof and determines the subdegrees of the action of $\A_6$ on the cosets of $M=(\Sy_3\wr\Sy_2)\cap\A_6$.

\begin{lstlisting}[basicstyle=\ttfamily\small,breaklines=true,columns=fullflexible]
S := SymmetricGroup(6);
L := DerivedSubgroup(S);
W := sub<S | (1,2),(1,2,3),(4,5),(4,5,6),(1,4)(2,5)(3,6)>;
M := L meet W;
assert Order(M) eq 36;
assert #Subgroups(M : OrderEqual := 12) eq 0;
H18 := Subgroups(M : OrderEqual := 18);
assert #H18 eq 1;
A5 := { sub<L | (C`subgroup)^g > : C in Subgroups(L : OrderEqual := 60), g in L };
assert #A5 eq 12;
assert forall{ Order(T meet C`subgroup) eq 6 : T in A5, C in H18 };
phi, P := CosetAction(L,M);
assert Degree(P) eq 10;
assert Sort([#O : O in Orbits(Stabilizer(P,1))]) eq [1,9];
print "All assertions passed.";
\end{lstlisting}

\vskip 10pt
\noindent{\bf Declaration of competing interest}

The authors declare that they have no known competing financial interests or personal relationships that could have appeared to influence the work reported in this paper.

\end{document}